\let\underbrace\LaTeXunderbrace
\let\overbrace\LaTeXoverbrace
    \def\MR#1{}
\theoremstyle{plain}
\newtheorem{Theorem}{Theorem}[section]
\newtheorem{Lemma}[Theorem]{Lemma}
\newtheorem{Corollary}[Theorem]{Corollary}
\newtheorem{Proposition}[Theorem]{Proposition}
\theoremstyle{definition}
\newtheorem{Assumptions and Discussion}[Theorem]{Assumptions and Discussion}
\newtheorem{Example}[Theorem]{Example}
\newtheorem{Definition}[Theorem]{Definition}
\newtheorem{Remark}[Theorem]{Remark}
\newtheorem{Observation}[Theorem]{Observation}
\newtheorem{Notation}[Theorem]{Notation}
\newtheorem{Setting}[Theorem]{Setting}
\theoremstyle{remark}
\newtheorem*{acknowledgment*}{Acknowledgment}
\def\lex{\operatorname{lex}}
\def\ceil#1{\left\lceil #1 \right\rceil}
\def\codim{\operatorname{codim}}
\def\dim{\operatorname{dim}}
\def\diff{\operatorname{diff}}
\def\floor#1{\left\lfloor #1 \right\rfloor}
\def\Ht{\operatorname{ht}} 
\def\indeg{\operatorname{indeg}} 
\def\ini{\operatorname{in}} 
\def\isom{\cong}
\def\KK{{\mathbb K}}
\def\lex{{\operatorname{lex}}}
\def\MC{\operatorname{MC}}
\def\Mon{\operatorname{Mon}} 
\def\NN{{\mathbb N}}
\def\part{\operatorname{part}}
\def\sgn{\operatorname{sgn}}
\def\sJ{\mathscr{J}}
\def\sort{\operatorname{sort}}
\def\Sss{\operatorname{Sss}}
\def\ZZ{{\mathbb Z}}
\newcommand\bdalpha{{\bm \alpha}}
\newcommand\bda{{\bm a}}
\newcommand\bdb{{\bm b}}
\newcommand\bdc{{\bm c}}
\newcommand\bdd{{\bm d}}
\newcommand\bdeta{{\bm \eta}}
\def\bdeta{{\bm \eta}}
\newcommand\bds{{\bm s}}
\newcommand\bdT{{\bm T}}
\newcommand\bdtau{{\bm \tau}}
\newcommand\bdx{{\bm x}}
\newcommand\calA{\mathcal{A}}
\newcommand\calD{\mathcal{D}}
\newcommand\calE{\mathcal{E}}
\newcommand\calF{\mathcal{F}}
\newcommand\calG{\mathcal{G}}
\newcommand\fraka{\mathfrak{a}}
\def\frakS{\mathfrak{S}}
\newcommand\length{\operatorname{length}}
\newcommand{\pd}{\operatorname{pd}}
\newcommand{\rank}{\operatorname{rank}}
\def\reg{\operatorname{reg}}
\begin{document}

\title{Regularity and multiplicity of Veronese type algebras}

\author[Kuei-Nuan Lin, Yi-Huang Shen]{Kuei-Nuan Lin and Yi-Huang Shen}

\thanks{2020 {\em Mathematics Subject Classification}.
    Primary 05E40 
    13F55  	
    13F65  	
    Secondary 14M25  	
    13H15  	
    13D02  
}

\thanks{Keyword: Algebra of Veronese type, Regularity, Cohen--Macaulay, Multiplicity}

\address{Department of Mathematics, The Penn State University, McKeesport, PA, 15132, USA}
\email{linkn@psu.edu}

\address{School of Mathematical Sciences, University of Science and Technology of China, Hefei, Anhui, 230026, P.R.~China}
\email{yhshen@ustc.edu.cn}

\begin{abstract}
    In this paper, we study the algebra of Veronese type. We show that the presentation ideal of this algebra has an initial ideal whose Alexander dual has linear quotients. As an application, we explicitly obtain the Castelnuovo-Mumford regularity of the Veronese type algebra. Furthermore, we give an effective upper bound on the multiplicity of this algebra.
\end{abstract}

\maketitle

\section{Introduction}

Let $S=\KK[x_{1},\dots,x_{n}]$ be a polynomial ring over a field $\KK$ with $n\ge 2$.  Suppose that $I$ is an ideal minimally generated by some monomials $f_1,\ldots, f_u$ in $S$. The \emph{semigroup ring} associated to $I$, denoted by $\KK[I]$, 
is the subalgebra of $S$ generated by $f_1,\dots,f_u$. This ring is
also known as the \emph{toric ring} associated with $I$.  
In this work, we focus on investigating several algebraic invariants of $\KK[I]$ associated with an ideal of Veronese type.

We fix a degree $d$ and a sequence $\bdalpha=(\alpha_1,\ldots, \alpha_n)$ of integers with $1\le \alpha_1 \le \cdots \leq \alpha_n\le d$ and $d <\sum_{i=1}^n \alpha_i$.  Let $\NN=\Set{0,1,2,\dots}$ be the set of non-negative integers and
\[
    I_{d,\bdalpha}\coloneqq \Braket{\bdx^\bdc: \bdc=(c_1,\dots,c_n)\in \NN^n,\, \sum_{i=1}^n c_i =d \text{ and } c_i\le \alpha_i \text{ for each $i$ }}
\]
be \emph{an ideal of Veronese type}. Related, let $\calA_{d,\bdalpha}=\KK[I_{d,\bdalpha}]\subset S$ and call it an \emph{algebra of Veronese type}. If $\alpha_1=\cdots=\alpha_n$, then $I_{d,\bdalpha}$ is a special strongly symmetric shifted ideal. Moreover, if $\alpha_i=d$ for all $i$, then $\calA_{d,\bdalpha}$ is the $d$-{th} Veronese subring of $S$.   Algebras of Veronese type have been studied from 
various viewpoints, see \cites{MR627814,MR524782,Sturmfels}. De Negri and Hibi also classified the Gorensteinness of $\calA_{d,\bdalpha}$ in \cite[Theorem 2.4]{MR1458806}. In addition, Costantini and Seceleanu studied algebras associated with strongly symmetric shifted ideals in \cite{arXiv:2208.10476}. In this paper, we continue their work and compute the Castelnuovo--Mumford regularity and the multiplicity of $\calA_{d,\bdalpha}$.

Recall that for a finitely generated 
graded module $M$ over the polynomial ring 
$S$,
the \emph{Castelnuovo--Mumford regularity} of $M$, denoted by $\reg(M)$, is $\max\Set{j-i : \beta_{i,j}\ne 0}$, where the $\beta_{i,j}$'s are the graded Betti numbers of $M$. This regularity can be used to bound the degree of the syzygy generators of the module $M$. Another important invariant that we study here is the \emph{multiplicity} $\mathtt{e}(M)$ of $M=\bigoplus_k M_k$ with respect to the graded maximal ideal,
where $M_k$ is the degree $k$ component of the graded module $M$.
Recall that the Hilbert function $H(M,k):=\dim_{\KK}(M_k)$ is eventually a polynomial in $k$ of degree $\dim(M)-1$. The leading coefficient of this polynomial is of form $\mathtt{e}(M)/(\dim(M)-1)!$. 
If $M$ is the homogeneous coordinate ring of a projective variety $X$, then the multiplicity is just the \emph{degree} of $X$. 

One way to study the semigroup ring $\KK[I]$ algebraically is to investigate its presentation ideal.
Let $\KK[\bdT]= \KK[T_{f_1}, \ldots, T_{f_u}]$ be a polynomial ring in $u$ variables over $\KK$.
Then the \emph{presentation ideal} $J$ is the kernel of the canonical $\KK$-algebra homomorphism $\psi: \KK[\bdT]\rightarrow \KK[I]$ with $\psi(T_{f_i})=f_i$. Obviously,  $J$ is a prime ideal and we have $\KK[\bdT]/J \cong \KK[I]$.

The study of algebra $\KK[I]$ becomes more feasible if such a Gr\"obner basis of the presentation ideal $J$ is known; see \cite{MR3595300} for the case of chordal bipartite graphs, \cite{MR2955237} for the $d$-{th} Veronese subring, and \cite{Lin-Shen3DMulti} for the case of three-dimensional Ferrers diagrams. 
This is because the invariants of the initial ideal and the original ideal are closely related, c.f.~ \cite[Section 3.3]{MR2724673}. According to the work of Conca and Varbaro in \cite{CV}, these relations become tight when the initial ideal is squarefree.  
On the other hand, without knowing the Gr\"obner basis of $J$, finding the Castelnuovo--Mumford regularity or the multiplicity of $\KK[I]$, for example, becomes a difficult task; cf.~\cite{MR1953712}.

It is worth noting that if $J$ has a quadratic Gr\"obner basis, then the algebra $\KK[I]$ is Koszul (that is, $\KK$ has a linear resolution over $\KK[I]$) by \cite{Froberg}.
The authors of the current paper proved in \cite{Lin-Shen3D} that if $I$ is associated with a three-dimensional Ferrers diagram satisfying a mild property, then the presentation ideal $J$ has a quadratic Gr\"obner basis. This is the foundation for the calculation of the Castelnuovo--Mumford regularity and the multiplicity in \cite{Lin-Shen3DMulti}. However, the way we obtained the quadratic Gr\"obner basis seems to be model-dependent. 
A more useful tool is the idea of \emph{sorting monomials} introduced by Sturmfels. With this technique, 
Sturmfels \cite{Sturmfels} proved that the presentation ideal of $\calA_{d,\bdalpha}$ has a quadratic Gr\"obner basis.  
More recently, Herzog et al.~\cite{arXiv:2209.10799} generalized the notion of sortability to the non-equigenerated case and proved that the presentation ideal of the algebra associated with a sortable monomial ideal also has a quadratic Gr\"obner basis. 

With the quadratic Gr\"obner basis of $J$ described earlier  by Sturmfels, we can study the Veronese type algebra $\calA_{d,\bdalpha}$ in more detail. As a main result, we obtain the Castelnuovo--Mumford regularity of $\calA_{d,\bdalpha}$ explicitly in \Cref{regSUM}. This generalizes the result  
\cite[Corollary 2.12, Remark 2.13]{BVV} of Bruns, Vasconcelos and Villarreal and \cite[Theorem 4.2]{MR2955237} of Nitsche
in the case of the $d$-{th} Veronese subring.
We then offer an effective bound on the multiplicity of $\calA_{d,\bdalpha}$ in \Cref{MultiBound}. 
Notice that, the multiplicity of $\calA_{d,\bdalpha}$ is actually the normalized volume of the Newton polytope of the ideal $I_{d,\bdalpha}$, as it is an Ehrhart ring by \cite[Corollary 4.10]{EVY}.

This work is organized as follows. In Section \ref{Basic}, 
we recall some essential definitions and terminology that we will need later.
In Section \ref{maxCliques}, we use the combinatorial structure of the 
sorting operation to study the algebra $\calA_{d,\bdalpha}$. From the initial ideal of the presentation ideal $J$ of $\calA_{d,\bdalpha}$, we introduce a structural graph $\calG_{d,\bdalpha}$ and study its maximal cliques. As a result, we derive the generators of the Alexander dual of $\ini(J)$ in \Cref{cor:Delta_maximal_clique}. In Section \ref{Sec:Shelling}, we propose a carefully constructed order in \Cref{Order}.  With it, we show in \Cref{prop:linearQuotient} that this Alexander dual ideal has linear quotients.  Combinatorial details of the quotients are also presented in this section. Finally, in Section \ref{Sec:app}, we gather all the tools and results and present the two main theorems mentioned above.

\section{Preliminaries}
\label{Basic}
Throughout this paper, we fix a positive integer $n\ge 2$. Following the convention, $[n]$ stands for the set $\{1,2,\dots,n\}$. Let $S=\KK[x_{1},\dots,x_{n}]$ be a polynomial ring over a field $\KK$, and $\frakS_n$ be the symmetric group of $[n]$.

\begin{Notation}
    \begin{enumerate}[a]
        \item Let $\bda\in \ZZ^n$ be a tuple written in boldface. Unless otherwise stated, we usually write $a_i$ correspondingly with subscript, not in boldface, for its $i$-{th} coordinate. Namely, we expect that $\bda=(a_1,\dots,a_n)\in \ZZ^n$. Following this convention, if we encounter several tuples $\bda^1,\dots,\bda^t\in \ZZ^n$, we will have $\bda^i=(a_1^i,\dots,a_n^i)$ for each $i\in [t]$.
        \item Suppose that $\bda\in \NN^n$. We will write $\bdx^\bda$ for the monomial $x_1^{a_1}\cdots x_n^{a_n}$ in $S$ 
            and define $|\bda|\coloneqq a_1+\cdots +a_n$. 
        \item Let $I$ be a monomial ideal of $S$. We write $G(I)$ for the minimal monomial generating set
            of $I$, and $\mu(I)= \#G(I)$ for the minimal number of generators. 
    \end{enumerate}
\end{Notation}

Recall that Sturmfels introduced in \cite[Chapter 14]{Sturmfels} a {sorting operator} for monomials of the same degree.

\begin{Definition}
    Let $V_{n,d}\coloneqq \{\bda\in \NN^n:|\bda|=d\}$.
    \begin{enumerate}[i]
        \item Let $\Mon_d$ be the set of monomials of degree $d$ in $S=\KK[x_1,\dots,x_n]$. Then, we have the \emph{sorting operator} for $p\ge 2$:
            \[
                \sort: \underbrace{\Mon_d\times \cdots \times \Mon_d}_{\text{$p$ times}} \to \underbrace{\Mon_d\times \cdots \times \Mon_d}_{\text{$p$ times}},\qquad (u_1,\dots,u_p)\mapsto (v_1,\dots,v_p),
            \]
            which is defined as follows. Suppose that $u_1\cdots u_p=x_{i_1}x_{i_2}\cdots x_{i_{pd}}$ with $i_1\le i_2\le \cdots\le i_{pd}$. Then, $v_k\coloneqq x_{i_{k}}x_{i_{p+k}}\cdots x_{i_{(d-1)p+k}}$ for $k\in [p]$.  The sequence $(u_1,\dots,u_p)$ of monomials in $\Mon_d$ is called \emph{sorted} if $\sort(u_1,\dots,u_p)=(u_1,\dots,u_p)$. A subset $U$ of $V_{n,d}$ is \emph{sortable} if $\sort(U\times U)\subseteq U\times U$.
        \item  Since we have a natural correspondence between $\bdx^{\bda}\in \Mon_d$ and $\bda\in V_{n,d}$, by abuse of notation, we also derive a sorting operator
            \[
                \sort:V_{n,d}\times \cdots \times V_{n,d} \to V_{n,d} \times
                \cdots \times V_{n,d}.
            \]
            Similarly, we can also talk about whether a subset of $V_{n,d}$ is sortable.
    \end{enumerate}
\end{Definition}

In this paper, we always assume the following.

\begin{Setting}
    \label{set:ideal_veronese}
    Suppose that $n\ge 3$. 
    We fix a degree $d$ and a sequence $\bdalpha=(\alpha_1,\ldots, \alpha_n)$ of integers with $1\le \alpha_1 \le \cdots \leq \alpha_n\le d$ and $d <
    |\bdalpha|
    $. Let $V_{n,d}^{\bdalpha}\coloneqq \Set{\bdc=(c_1,\dots,c_n)\in V_{n,d}:c_i\le \alpha_i \text{ for all $i$}}$. Furthermore, let $I_{d,\bdalpha}\coloneqq \braket{\bdx^{\bdc}:\bdc\in V_{n,d}^{\bdalpha}}$ be the Veronese type ideal in $S$, and $\calA_{d,\bdalpha}=\mathbb{K}[I_{d,\bdalpha}]$ be the Veronese type algebra. 
\end{Setting}

\begin{Remark}
    \label{prop:analytic_spread}
    \begin{enumerate}[a]
        \item When $n=2$, it is not difficult to see that the Veronese type algebra $\calA_{d,\bdalpha}$ is isomorphic to a Veronese ring, up to a shift. 
        \item If $d=|\bdalpha|$, then the Veronese type algebra $\calA_{d,\bdalpha}$ is generated principally by the monomial $\bdx^{\bdalpha}$.
        \item  The Krull dimension of $\calA_{d,\bdalpha}$ is $n$ by \cite[Corollary 2.2(4)]{MR1458806}.

    \end{enumerate}  
\end{Remark}

\begin{Remark}
    \label{thm:definingIdeals} 
    Let $\calA_{d,\bdalpha}$ be the Veronese type algebra in
    \Cref{set:ideal_veronese}.
    Since $V_{n,d}^{\bdalpha}$ is sortable by
    \cite[Proposition 6.11]{MR2850142}, the set
    \[
        \sJ \coloneqq \Set{\underline{T_{\bda^1}
            T_{\bda^2}}-T_{\bdb^1}T_{\bdb^2} :\bda^1,\bda^2\in
            V_{n,d}^{\bdalpha}, \text{ $(\bda^1,\bda^2)$ unsorted and
        $\sort(\bda^1,\bda^2)=(\bdb^1,\bdb^2)$}} 
    \]
    is a Gr\"obner basis of the presentation ideal $J$ of $\calA_{d,\bdalpha}$ with respect to a term order such that the marked \textup{(}underlined\textup{)} monomials are the leading terms by \cite[Theorem 6.16]{MR2850142}. In particular, $J$ has a quadratic squarefree initial ideal. Consequently, $\calA_{d,\bdalpha}$ is a Cohen--Macaulay Koszul normal domain, by 
    \cite[Corollary 2.2]{MR1458806}, \cite[Theorems 5.16, 5.17, and 6.7]{MR2850142}, and \cite[
    Corollary 4.10]{EVY}.
\end{Remark}

\section{Maximal cliques}
\label{maxCliques}

Let $\calA_{d,\bdalpha}$ be the Veronese type algebra in \Cref{set:ideal_veronese}.  In one of the main results of this paper, we determine the Castelnuovo--Mumford regularity of the algebra $\calA_{d,\bdalpha}$.
Recall that if $\beta_{i,j}$ is the graded Betti number of $\calA_{d,\bdalpha}$ considered
as a $\KK[\bdT]$-module,
then the Castelnuovo--Mumford regularity of $\calA_{d,\bdalpha}$ is defined to be
\[
    \reg(\calA_{d,\bdalpha})\coloneqq \max_{i,j} \Set{j-i: \beta_{i,j}\ne 0}.
\]
Our exploration of the Castelnuovo--Mumford regularity depends on the
following key observations.

\begin{Observation}
    \label{regToPd}
    Let $J$ be the presentation ideal of $\calA_{d,\bdalpha}$ generated by the set $\sJ$ in \Cref{thm:definingIdeals}. Since $\calA_{d,\bdalpha}$ is Cohen--Macaulay and $\ini(J)$ is squarefree by \Cref{thm:definingIdeals}, it follows from \cite[Corollary 2.7]{CV} that the quotient ring $\KK[\bdT]/\ini(J)$ is Cohen--Macaulay.
    Let $(\ini(J))^\vee$ be the Alexander dual of the squarefree ideal $\ini(J)$. By \cite[Corollary 2.7]{CV}
    and \cite[Proposition 8.1.10]{MR2724673}, $\reg(\calA_{d,\bdalpha})=\reg(\KK[\bdT]/J) = \reg(\KK[\bdT]/\ini(J))=\pd((\ini(J))^\vee)$.
\end{Observation}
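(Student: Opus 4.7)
The plan is to chain together three equalities, each of which follows from a standard result once the ingredients supplied by \Cref{thm:definingIdeals} are in place. First, the identification $\reg(\calA_{d,\bdalpha})=\reg(\KK[\bdT]/J)$ is immediate from the presentation isomorphism $\calA_{d,\bdalpha}\cong \KK[\bdT]/J$, since $\reg$ is computed from the graded Betti numbers of the module over $\KK[\bdT]$.

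Next, I would invoke \cite[Corollary 2.7]{CV} of Conca--Varbaro. Their result says that if a homogeneous ideal has a squarefree initial ideal with respect to some term order, and if the quotient $\KK[\bdT]/J$ is Cohen--Macaulay, then passage to the initial ideal preserves regularity and also preserves the Cohen--Macaulay property. The hypotheses hold here: \Cref{thm:definingIdeals} gives that $\ini(J)$ is squarefree with respect to the sorting term order, and it also asserts that $\calA_{d,\bdalpha}\cong \KK[\bdT]/J$ is Cohen--Macaulay. Therefore $\KK[\bdT]/\ini(J)$ is Cohen--Macaulay and $\reg(\KK[\bdT]/J)=\reg(\KK[\bdT]/\ini(J))$.

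Finally, $\ini(J)$ is a squarefree monomial ideal, so Terai's formula applies in the form recorded as \cite[Proposition 8.1.10]{MR2724673}: for a squarefree monomial ideal $L$ of a polynomial ring $R$, one has the identity $\reg(R/L)=\pd(L^\vee)$. Specialising to $L=\ini(J)$ yields $\reg(\KK[\bdT]/\ini(J))=\pd((\ini(J))^\vee)$, which completes the chain of equalities in the statement.

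There is no genuine obstacle in this observation itself; it is a bookkeeping step that packages three known theorems in the right order. The substantive work lies downstream: to turn this identity into an \emph{explicit} computation of $\reg(\calA_{d,\bdalpha})$, one must describe $(\ini(J))^\vee$ concretely and then compute $\pd((\ini(J))^\vee)$, which is precisely what the later sections on the graph $\calG_{d,\bdalpha}$, its maximal cliques, and the linear-quotient order of \Cref{Order} are designed to do.
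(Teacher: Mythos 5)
Your proposal is correct and follows exactly the same citation chain as the paper: the presentation isomorphism, Conca--Varbaro \cite[Corollary 2.7]{CV} for transfer of regularity (and Cohen--Macaulayness) to the squarefree initial ideal, and Terai's formula \cite[Proposition 8.1.10]{MR2724673}. The only very minor imprecision is attributing the preservation of regularity under passage to $\ini(J)$ to the Cohen--Macaulay hypothesis; in fact Conca--Varbaro's result needs only that $\ini(J)$ is squarefree, with the Cohen--Macaulay property being preserved as a further consequence rather than a prerequisite.
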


Inspired by the above observation, we turn to find the projective dimension of $(\ini(J))^\vee$.
To do this, we proceed as follows.
First, since $\KK[\bdT]/\ini(J)$ is Cohen--Macaulay, $(\ini(J))^\vee$ is an equigenerated ideal.
We will describe the minimal monomial generators of this ideal as the maximal cliques of length $n$ of some graph in \Cref{prop:max_clique_size}. 
Then, in Section \ref{Sec:Shelling}, we give a total order on the minimal monomial generating set $G((\ini(J))^\vee)$ and show that $(\ini(J))^\vee$ has linear quotients with respect to this order. Details of these quotients are important by the following lemma.

\begin{Lemma}
    [{\cite[Corollary 8.2.2]{MR2724673}}]
    \label{MaxLenghtToPd}
    Let $I$ be an equigenerated ideal in the polynomial ring $S$. Suppose that the minimal generating set $G(I)$ consists of $f_1,\dots,f_m$ such that $Q_i\coloneqq \braket{f_1,\dots,f_{i-1}}:_S f_i$ is linear for each $i$. Let $p=\max_i \mu(Q_i)$ be the maximum of the minimal numbers of generators of the successive linear quotient ideals. Then, the top nonzero total Betti number of $I$ is $\beta_p(I)$, which has the value $\#\Set{i:\mu(Q_i)=p}$. In particular, the projective dimension of $I$ is given by $p$.
\end{Lemma}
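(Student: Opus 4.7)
The plan is to construct a minimal graded free resolution of $I$ by iterated mapping cones along the linear-quotient filtration, read the Betti numbers off the resolution, and then specialize to the top homological slot.

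First, set $I_i\coloneqq \braket{f_1,\dots,f_i}$ and let $d$ be the common degree of the $f_j$'s. The linear-quotient hypothesis yields, for each $i\ge 1$, the short exact sequence
\[
0 \longrightarrow (S/Q_i)(-d) \xrightarrow{\ \cdot f_i\ } S/I_{i-1} \longrightarrow S/I_i \longrightarrow 0,
\]
where by convention $Q_1=0$ and $I_0=0$. Because $Q_i$ is generated by $\mu(Q_i)$ linear forms, the Koszul complex on those forms is a minimal graded free resolution of $S/Q_i$ whose $j$-{th} term equals $S(-j)^{\binom{\mu(Q_i)}{j}}$. Iteratively forming the mapping cone of a chain lift of multiplication by $f_i$, starting from the trivial resolution of $S/I_0=S$, produces a graded free resolution of $S/I_m=S/I$.

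The main point is to confirm that each of these cones is minimal, so that Betti numbers can be summed directly. An induction on $i$ shows that the resolution of $S/I_{i-1}$ built so far is pure in each homological degree $j\ge 1$, concentrated in internal degree $d+(j-1)$; the shifted Koszul resolution of $(S/Q_i)(-d)$ has its $j$-{th} term concentrated in internal degree $d+j$; and the comparison maps are multiplication by $f_i$ in homological degree $0$ and, in each higher degree, matrices whose entries are linear forms drawn from the generators of $Q_i$. All entries of the comparison differentials therefore lie in strictly positive degree, so no unit entry can appear in the cone. Consequently minimality is preserved at each step, $I$ acquires a linear resolution, and
\[
\beta_j(I) \;=\; \sum_{i=1}^{m} \binom{\mu(Q_i)}{j}.
\]

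Finally, since $\binom{\mu(Q_i)}{j}=0$ whenever $j>\mu(Q_i)$, the largest $j$ for which any summand contributes is $j=p=\max_i\mu(Q_i)$, so $\pd(I)=p$. At that position, $\binom{\mu(Q_i)}{p}$ equals $1$ exactly when $\mu(Q_i)=p$ and vanishes otherwise, hence
\[
\beta_p(I) \;=\; \#\Set{i:\mu(Q_i)=p},
\]
as claimed. The technically delicate step is the minimality of the mapping cones; once that is in hand, the remainder is bookkeeping with binomial coefficients.
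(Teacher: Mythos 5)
Your proof is correct and follows the same iterated-mapping-cone argument that underlies \cite[Corollary 8.2.2]{MR2724673}, which the paper simply cites without reproducing. One minor imprecision: the entries of the comparison maps $\phi_j$ for $j\ge 1$ need not literally be the chosen generators of $Q_i$, only homogeneous linear forms, but that is all you need (and all you actually use) to rule out unit entries and conclude minimality of the cone.
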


\begin{Definition}
    \label{eta}
    \begin{enumerate}[a]
        \item Recall that $V_{n,d}$ is the set $\{\bda\in \NN^n:|\bda|=d\}$. For any distinct $\bda, \bdb \in V_{n,d}$, we write $\bda >_{\lex} \bdb$ if the leftmost nonzero entry of $\bda-\bdb$ is  positive.

        \item Let $\bdeta = (\eta_1,\dots,\eta_n)$ be the smallest tuple in $V_{n,d}^{\bdalpha}$ with respect to $>_{\lex}$, namely, if $\bda \in V_{n,d}^{\bdalpha}$, then $\bda \ge_{\lex} \bdeta$. It is clear that $\eta_n=\alpha_n$ and $\eta_i=\min\{\alpha_i,d-\eta_n-\eta_{n-1}-\cdots-\eta_{i+1}\}$ for $i=n-1,n-2,\dots,1$.

        \item Let $\calG=\calG(d,\bdalpha)$ be a simple graph on the set $V_{n,d}^{\bdalpha}$, such that $\{\bda,\bdb\}$ with $\bda>_{\lex}\bdb$ is an edge if and only if $(\bda,\bdb)$ is sorted. A \emph{clique} of $\calG$ is a collection of vertices, in which, every two different vertices are adjacent. A \emph{maximal clique} is a clique that is maximal with respect to inclusion. Let $\MC(\calG)=\MC(\calG(d,\bdalpha))$ be the set of maximal cliques of $\calG$.
    \end{enumerate}
\end{Definition}

\begin{Remark}
    \label{rmk:maximal_cliques}
    \begin{enumerate}[a]
        \item If $(\bda,\bdb)$ is a sorted pair in $V_{n,d}$, then $\bda\ge_{\lex}\bdb$.
        \item \label{rmk:maximal_cliques_a}
            The fact (ii) before \cite[Theorem 14.2]{Sturmfels} can be restated as follows. Suppose that $\{\bda^1>_{\lex} \bda^2>_{\lex} \cdots >_{\lex}\bda^s\}$ is a subset of $V_{n,d}^{\bdalpha}$ such that $\sort(\bda^i,\bda^j)=(\bda^i,\bda^j)$ for all $1\le i<j\le s$. Then, $\sort(\bda^1,\bda^2,\dots, \bda^s) =(\bda^1, \bda^2, \dots, \bda^s)$. In other words, sorted subsets of $V_{n,d}^{\bdalpha}$ are precisely cliques in $\calG$.

        \item \label{rmk:maximal_cliques_b}
            It follows from the description in \Cref{thm:definingIdeals} and the fact just stated in \ref{rmk:maximal_cliques_a} that the squarefree monomial ideal $\ini(J)$ is the edge ideal of the complement graph $\calG^{\complement}$. Suppose that $T_{\bda^1}T_{\bda^2}\cdots T_{\bda^{m}}\in G(\ini(J)^{\vee})$ is a minimal monomial generator of the Alexander dual ideal. This is equivalent to saying that $\{\bda^1, \dots, \bda^m\}$ is a minimal vertex cover of the complement graph $\calG^{\complement}$, by \cite[Corollary 9.1.5]{MR2724673}. In other words, the set complement $\{\bda^1, \dots, \bda^{m}\}^{\complement}$ is a maximal clique of $\calG$.
    \end{enumerate}
\end{Remark}

Consequently, in this section, we turn to the study of the maximal cliques.

\begin{Proposition}
    \label{prop:max_clique_size}
    Every maximal clique of $\calG$ has cardinality $n$.
\end{Proposition}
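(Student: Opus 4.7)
The plan is to circumvent a direct combinatorial analysis by invoking the Cohen--Macaulay property of $\KK[\bdT]/\ini(J)$ together with a standard Stanley--Reisner purity argument. By \Cref{rmk:maximal_cliques}, the squarefree monomial ideal $\ini(J)$ is the edge ideal of $\calG^{\complement}$, so its Stanley--Reisner complex $\Delta$ has as faces precisely the independent sets of $\calG^{\complement}$, i.e., the cliques of $\calG$; in particular, the facets of $\Delta$ are exactly the maximal cliques of $\calG$. Thus the proposition is equivalent to the assertion that $\Delta$ is pure of dimension $n-1$.

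I would then proceed in two steps. First, \Cref{regToPd} already records that $\KK[\bdT]/\ini(J)$ is Cohen--Macaulay, and Cohen--Macaulayness of a Stanley--Reisner ring forces the underlying simplicial complex to be pure. Consequently, all facets of $\Delta$ share a common cardinality $c$. Second, the Stanley--Reisner dimension formula, combined with the fact that passing to a Gr\"obner initial ideal preserves Krull dimension, gives
\[
    c \;=\; \dim \Delta + 1 \;=\; \dim \KK[\bdT]/\ini(J) \;=\; \dim \KK[\bdT]/J \;=\; \dim \calA_{d,\bdalpha},
\]
and the right-hand side equals $n$ by \Cref{prop:analytic_spread}. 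Concatenating the two steps, every maximal clique of $\calG$ has cardinality exactly $n$.

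The main obstacle in a purely combinatorial attack would be the lower bound: showing that every sorted subset of $V_{n,d}^{\bdalpha}$ of cardinality $s<n$ admits an extension by a new vertex. Through the associated $s \times d$ sortedness matrix $B$, one would need to insert a new weakly increasing row whose entries are simultaneously squeezed between those of the neighboring rows, whose induced exponent vector still respects the Veronese cap $\bdalpha$, and which is not a repeat of an existing row. The Cohen--Macaulay detour sidesteps all of this bookkeeping and reduces the proposition to a single dimension count, which is why I would favor it over any direct extension argument.
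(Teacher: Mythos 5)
Your proof is correct and takes essentially the same route as the paper: both sidestep a direct combinatorial argument by invoking Cohen--Macaulayness of $\KK[\bdT]/\ini(J)$ to get equidimensionality, and then a dimension count via $\dim\calA_{d,\bdalpha}=n$ (\Cref{prop:analytic_spread}). The paper phrases the equidimensionality step through Alexander duality and height-unmixedness of $\ini(J)$, whereas you phrase it as purity of the Stanley--Reisner complex, but these are the same fact in dual language.
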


\begin{proof}
    The Veronese type algebra $\calA_{d,\bdalpha}$ is Cohen--Macaulay by \Cref{thm:definingIdeals}. In other words, the presentation ideal $J$ is Cohen--Macaulay. Since $\ini(J)$ is squarefree, this implies that $\ini(J)$ is Cohen--Macaulay by \cite[Corollary 2.7]{CV}. In particular, $\ini(J)$ is height unmixed. Notice that $\Ht(\ini(J))=\Ht(J)=\#\bdT-n$ by \Cref{prop:analytic_spread}, where $\#\bdT$ is the number of variables in $\KK[\bdT]$, i.e., the dimension of this polynomial ring. 
    Consequently, every minimal monomial generator of $\ini(J)^{\vee}$ has degree $\#\bdT -n$, and every maximal clique has size $\#\bdT-(\#\bdT -n)=n$ by \Cref{rmk:maximal_cliques} \ref{rmk:maximal_cliques_b}.
\end{proof}

We need more tools to describe these maximal cliques in detail.

\begin{Definition}
    Let $\bda,\bdb\in\ZZ^n$ be two tuples. Suppose that the following
    condition is satisfied.
    \begin{enumerate}
        \item [(\texttt{S}):]
            There exist $1\le i_1<i_2<\cdots<i_{2k-1}<i_{2k}\le n$ such that
            $b_{i_{2j-1}}=a_{i_{2j-1}}-1$ while $b_{i_{2j}}=a_{i_{2j}}+1$ for
            $1\le j\le k$. Furthermore, for $t\in [n]\setminus
            \{i_1,i_2,\dots,i_{2k}\}$, one has $b_{t}=a_t$.
    \end{enumerate}
    In this case, the \emph{sorting signature set} $\Delta(\bda,\bdb)$ of this
    pair is defined to be
    \[
        \Delta(\bda,\bdb)\coloneqq \bigcup_{j=1}^k [i_{2j-1},i_{2j}),
    \]
    which is considered as the union of intervals on the real line.
    Consequently, the \emph{length} of this set is
    $\sum_{j=1}^k(i_{2j}-i_{2j-1})$, denoted by $\length(\Delta(\bda,\bdb))$.
\end{Definition}

From the definition, we can directly verify the following simple fact.

\begin{Remark}
    \label{rmk:sort_output}
    Let $\bda$ and $\bdb$ be two tuples in $V_{n,d}$, and $P\coloneqq\{i\mid a_i\not\equiv b_i\mod 2\}$. Since $\left|\bda\right|=\left|\bdb\right|$, the cardinality of $P$ is even. Whence, we may assume that $P=\{i_1<i_2<\cdots<i_{2k}\}$.
Suppose that $\sort(\bda,\bdb)=(\bdc,\bdd)$. One can verify, using the definition of the sorting operator, that 
\[
    c_i=d_i=\frac{a_i+b_i}{2}
\]
for $i\notin P$. Furthermore,
\[
    c_{i_{2j-1}}=\frac{a_{2j-1}+b_{2j-1}+1}{2}, \quad
    d_{i_{2j-1}}=\frac{a_{2j-1}+b_{2j-1}-1}{2}, \quad
    c_{i_{2j}}=\frac{a_{2j}+b_{2j}-1}{2}, \quad
    d_{i_{2j}}=\frac{a_{2j}+b_{2j}+1}{2},
\]
for $1\le j\le k$.
\end{Remark}

\begin{Lemma}
    \label{lem:3_1}
    Let $\bda>_{\lex}\bdb$ be two tuples in $V_{n,d}$. Then,
    $\sort(\bda,\bdb)=(\bda,\bdb)$ if and only if the condition
    \textup{(\texttt{S})} is satisfied.
\end{Lemma}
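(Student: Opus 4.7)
The plan is to reformulate sortedness as a statement about prefix sums of $\bda$ and $\bdb$, after which both implications become transparent from the structure of a $\{0,1\}$-valued walk.

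First, I would set $N_t \coloneqq \sum_{s\le t}(a_s + b_s)$ and $D_t \coloneqq \sum_{s\le t}(a_s - b_s)$ for $t = 0, 1, \dots, n$, noting that $N_0 = D_0 = 0$ and $D_n = 0$ since $|\bda| = |\bdb| = d$. Unpacking Sturmfels' definition of $\sort$, the pair $(\bda,\bdb)$ is sorted exactly when $\bdx^\bda$ collects the odd-indexed letters of the non-decreasingly sorted concatenation $\bdx^\bda\bdx^\bdb = x_{i_1}x_{i_2}\cdots x_{i_{2d}}$. Counting how many letters with index $\le t$ are forced into the odd slots among the first $N_t$ positions gives the single requirement $\sum_{s\le t} a_s = \lceil N_t/2 \rceil$ for every $t\in [n]$. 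Since $\sum_{s\le t} a_s = (N_t + D_t)/2$, this is equivalent to the pair of conditions: $D_t \in \{0,1\}$ for all $t$, and $D_t \equiv N_t \pmod 2$ for all $t$. (The lexicographic hypothesis $\bda >_{\lex} \bdb$ is what makes the odd-position copy be $\bdx^\bda$ rather than $\bdx^\bdb$.)

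For the forward implication, assuming $(\bda,\bdb)$ is sorted so that $D_t \in \{0,1\}$ everywhere with $D_0 = D_n = 0$, I would list the indices $i_1 < i_2 < \cdots$ where $D$ changes value. Being $\{0,1\}$-valued and returning to $0$, these jumps must alternate $0 \to 1 \to 0 \to 1 \to \cdots$, so the list has even length $2k$, with up-jumps at $i_{2j-1}$ and down-jumps at $i_{2j}$. Between consecutive jump indices $a_t - b_t = D_t - D_{t-1} = 0$, while $a_{i_{2j-1}} - b_{i_{2j-1}} = +1$ and $a_{i_{2j}} - b_{i_{2j}} = -1$. This is exactly condition (\texttt{S}).

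For the backward implication, I would read off from (\texttt{S}) that $D_t = 1$ precisely on the half-open intervals $[i_{2j-1}, i_{2j})$ and $D_t = 0$ elsewhere, so $D_t \in \{0,1\}$ is immediate. The parity requirement follows because $a_s + b_s$ is even for $s \notin \{i_1,\dots,i_{2k}\}$ and odd at each $s \in \{i_1,\dots,i_{2k}\}$; hence $N_t \bmod 2$ equals $\#\{j : i_j \le t\} \bmod 2$, which is $1$ precisely on $\bigcup_j [i_{2j-1}, i_{2j})$, agreeing with $D_t$. Combined with the equivalent form from the first paragraph, $(\bda,\bdb)$ is sorted. The only real difficulty is keeping track of the parities; once the reformulation $D_t \in \{0,1\}$ with matching parity is in hand, the lemma essentially reads itself off the walk.
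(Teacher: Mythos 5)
Your proof is correct. The paper itself offers no written proof, dismissing the lemma as something one can ``directly verify from the definition,'' so your argument supplies exactly the verification the authors left implicit, and it does so cleanly: translate sortedness into the prefix-sum condition $\sum_{s\le t}a_s=\lceil N_t/2\rceil$, rewrite it as a constraint on the walk $D_t$, and read off condition \textup{(\texttt{S})} from the alternating up/down jump pattern of a $\{0,1\}$-valued excursion.

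One small inefficiency worth noting: the parity condition $D_t\equiv N_t\pmod 2$ is automatic, since $N_t+D_t=2\sum_{s\le t}a_s$ is always even. Thus your ``pair of conditions'' collapses to the single condition $D_t\in\{0,1\}$, and the paragraph in the backward direction devoted to checking the parities is not actually needed --- it is a correct sanity check, but not load-bearing. Once this is noticed, the lemma really does read itself off the walk, with no bookkeeping of parities required.
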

\begin{proof}
    This follows directly from the observation in \Cref{rmk:sort_output}.
\end{proof}

The following are further important observations when using the sorting operator.

\begin{Lemma}
    \label{lem:sss_separate}
    If $\{\bda>_{\lex}\bdb>_{\lex}\bdc\}$ is a sorted subset in $V_{n,d}$,
    then $\Delta(\bda,\bdc)=\Delta(\bda,\bdb)\sqcup \Delta(\bdb,\bdc)$.
\end{Lemma}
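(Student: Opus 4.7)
The strategy is to translate the geometric description of the sorting signature set into a statement about prefix sums of the difference vector, and then read off the conclusion from linearity.

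For any $\bdu,\bdv\in V_{n,d}$ and $k\in\{0,1,\dots,n\}$, set
\begin{align*}
P_k(\bdu,\bdv)\coloneqq \sum_{i=1}^{k}(u_i-v_i),
\end{align*}
so $P_0(\bdu,\bdv)=P_n(\bdu,\bdv)=0$ because $|\bdu|=|\bdv|=d$. The only substantive step is to prove the following reformulation: if $\bdu>_{\lex}\bdv$ is a sorted pair in $V_{n,d}$, then $P_k(\bdu,\bdv)\in\{0,1\}$ for every $k$, and for $k\in[n]$,
\begin{align*}
k\in\Delta(\bdu,\bdv) \iff P_k(\bdu,\bdv)=1.
\end{align*}
This is a direct consequence of \Cref{lem:3_1}: by condition (\texttt{S}), the entries $u_i-v_i$ equal $+1$ at the odd-indexed markers $i_{2j-1}$, equal $-1$ at the even-indexed markers $i_{2j}$, and vanish elsewhere; so the partial sums rise to $1$ at each $i_{2j-1}$ and drop back to $0$ at the next $i_{2j}$, which is exactly the indicator function of the integer points in $\bigcup_j[i_{2j-1},i_{2j})$.

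With this in hand, the theorem follows immediately. All three pairs $(\bda,\bdb)$, $(\bdb,\bdc)$, $(\bda,\bdc)$ are sorted by hypothesis, so each of the three prefix-sum functions takes values in $\{0,1\}$. Additivity of differences gives
\begin{align*}
P_k(\bda,\bdc)=P_k(\bda,\bdb)+P_k(\bdb,\bdc),
\end{align*}
and the left-hand side being at most $1$ forbids $P_k(\bda,\bdb)=P_k(\bdb,\bdc)=1$ simultaneously. Hence $\Delta(\bda,\bdb)\cap\Delta(\bdb,\bdc)=\emptyset$, and $P_k(\bda,\bdc)=1$ precisely when exactly one of $P_k(\bda,\bdb),P_k(\bdb,\bdc)$ equals $1$, which yields $\Delta(\bda,\bdc)=\Delta(\bda,\bdb)\sqcup\Delta(\bdb,\bdc)$ at the integer level. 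Because all three sets are finite unions of half-open intervals with integer endpoints, this integer-level equality is equivalent to equality as subsets of $\RR$. The only step requiring any care is the prefix-sum reformulation itself, but this is a routine inspection of the sign pattern dictated by condition (\texttt{S}); no real obstacle is anticipated.
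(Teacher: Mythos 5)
Your proof is correct, and it takes a genuinely different (and cleaner) route than the paper's. The paper reduces the lemma to showing that $\Delta(\bda,\bdb)\cap\Delta(\bdb,\bdc)=\emptyset$, then proves that disjointness by contradiction: assuming two intervals from $\Delta(\bda,\bdb)$ and $\Delta(\bdb,\bdc)$ intersect, it splits into cases on the relative position of the left endpoints and derives, in each case, that condition (\texttt{S}) fails for the pair $(\bda,\bdc)$. Your reformulation via prefix sums $P_k(\bdu,\bdv)=\sum_{i\le k}(u_i-v_i)$ linearizes the whole problem at once: the observation that a sorted pair is characterized by $P_k\in\{0,1\}$ with $\Delta(\bdu,\bdv)\cap\ZZ=\{k:P_k=1\}$ makes the identity $P_k(\bda,\bdc)=P_k(\bda,\bdb)+P_k(\bdb,\bdc)$ do all the work, simultaneously giving disjointness and the exact description of $\Delta(\bda,\bdc)$, with no case analysis. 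This also makes explicit the ``it suffices'' step that the paper leaves implicit (passing from disjointness to the full equality of sets). The one thing worth noting is that your prefix-sum characterization is morally equivalent to the $\rank$ function used in \Cref{def:ranks}\ref{item:ranks_c}, and that the paper's own case (b) in fact secretly invokes a version of the same prefix-sum identity; so the two proofs share the underlying idea, but yours distills it into a single, uniform argument. The final step (deducing equality of the real unions of half-open integer intervals from equality of their integer traces) is correctly handled.
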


\begin{proof}
    It suffices to show that $\Delta(\bda,\bdb) \cap \Delta(\bdb,\bdc) =\emptyset$. Suppose for contradiction that $\Delta(\bda,\bdb) \cap \Delta(\bdb,\bdc)$ is not empty.  Then, we can find suitable intersecting $[i_{2j-1},i_{2j})$ and $[i_{2j'-1}', i_{2j'}')$ in the definition of $\Delta(\bda,\bdb)$ and $\Delta(\bdb, \bdc)$ respectively. Let $j$ and $j'$ be the smallest such that this happens.
    \begin{enumerate}[a]
        \item Suppose that $i_{2j-1}=i_{2j'-1}'$. Whence $b_{i_{2j-1}}=a_{i_{2j-1}}-1$ and $c_{i_{2j-1}}=b_{i_{2j-1}}-1$.  This forces $c_{i_{2j-1}}=a_{i_{2j-1}}-2$. Since condition \textup{(\texttt{S})} is not satisfied here, we have a contradiction to the assumption that $(\bda,\bdc)$ is a sorted pair by \Cref{lem:3_1}.
        \item \label{lem:sss_separate_b}
            Suppose that $i_{2j-1}<i_{2j'-1}'<i_{2j}$. Now, we derive $i_{2j'-2}'\le i_{2j-1}$ from the minimality assumption.  From the interval positions, it is clear that $a_{i_{2j'-1}'}=b_{i_{2j'-1}'}=c_{i_{2j'-1}'}+1$ and consequently $a_{i_{2j'-1}'} +b_{i_{2j'-1}'} +c_{i_{2j'-1}'}\equiv 2\pmod 3$. Meanwhile, it is not difficult to check that $-1+\sum_{k\le i_{2j-1}}a_k=\sum_{k\le i_{2j-1}}b_k=\sum_{k\le i_{2j-1}}c_k$. Furthermore, we have $a_t=b_t=c_t$ for $i_{2j-1}<t<i_{2j'-1}'$.  Whence, the sorting operator will produce $a_{i_{2j'-1}'}+1 =b_{i_{2j'-1}'}=c_{i_{2j'-1}'}$, which is a contradiction.
        \item Suppose that $i_{2j'-1}'<i_{2j-1}<i_{2j'}'$. This case is similar to \ref{lem:sss_separate_b} above. \qedhere
    \end{enumerate}
\end{proof}

\begin{Lemma}
    \label{lem:3_4}
    \begin{enumerate}[a]
        \item If $\{\bda>_{\lex}\bdb>_{\lex}\bdd\}$ and $\{\bdb>_{\lex}\bdc>_{\lex}\bdd\}$ are two cliques in $\calG$, then $\{\bda>_{\lex}\bdb>_{\lex}\bdc >_{\lex}\bdd\}$ is also a clique in $\calG$.
        \item Symmetrically, if $\{\bda>_{\lex}\bdb>_{\lex}\bdc\}$ and $\{\bda>_{\lex}\bdc>_{\lex}\bdd\}$  are two cliques in $\calG$, then $\{\bda>_{\lex}\bdb>_{\lex}\bdc >_{\lex}\bdd\}$ is also a clique in $\calG$.
    \end{enumerate}
\end{Lemma}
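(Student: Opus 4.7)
The plan is to first reformulate sortedness as a partial-sum condition, then read off both parts of the lemma as elementary inequalities. For any $\bda \in V_{n,d}$, set $A_t := \sum_{k=1}^t a_k$, and analogously define $B_t, C_t, D_t$. The key equivalence I want to establish is: for $\bda \ge_{\lex} \bdc$ in $V_{n,d}$,
\[
    (\bda,\bdc) \text{ is sorted} \iff A_t - C_t \in \{0,1\} \text{ for every } t \in [n].
\]
This is a direct rephrasing of \Cref{lem:3_1}. Condition (\texttt{S}) says that the nonzero entries of $\bda - \bdc$, read left to right, are $+1, -1, +1, -1, \dots$, and since the function $t \mapsto A_t - C_t$ starts at $0$, ends at $0$ (the tuples have the same total degree), and changes by $a_{t+1} - c_{t+1}$ at each step, staying in $\{0,1\}$ is equivalent to exactly that alternating pattern.

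For part (a), the two given cliques imply that $A_t - B_t,\, B_t - D_t,\, A_t - D_t,\, B_t - C_t,\, C_t - D_t \in \{0,1\}$ for every $t$. Then
\[
    0 \le (A_t - B_t) + (B_t - C_t) = A_t - C_t = (A_t - D_t) - (C_t - D_t) \le 1 - 0 = 1,
\]
so $A_t - C_t \in \{0,1\}$, and the characterization above shows $(\bda,\bdc)$ is sorted. This is the only pair from $\{\bda, \bdb, \bdc, \bdd\}$ not already given by the hypotheses, so the four-element set is a clique in $\calG$.

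Part (b) is symmetric: the hypotheses give $A_t - B_t,\, A_t - C_t,\, B_t - C_t,\, A_t - D_t,\, C_t - D_t \in \{0,1\}$, and
\[
    0 \le (B_t - C_t) + (C_t - D_t) = B_t - D_t = (A_t - D_t) - (A_t - B_t) \le 1,
\]
so $(\bdb, \bdd)$ is sorted, making $\{\bda, \bdb, \bdc, \bdd\}$ a clique.

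The only substantive step is the partial-sum reformulation, but this is a mechanical translation of \Cref{lem:3_1}. Once that characterization is in hand, each part collapses to a single chain of inequalities, so I do not anticipate a significant obstacle beyond writing the equivalence cleanly.
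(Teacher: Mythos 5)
Your proof is correct and takes a genuinely more elementary route than the paper's. The paper proves \Cref{lem:3_4} by invoking \Cref{lem:sss_separate} to establish the disjointness and containment relations $\Delta(\bda,\bdb)\cap\Delta(\bdb,\bdd)=\emptyset$ and $\Delta(\bdb,\bdc)\subseteq\Delta(\bdb,\bdd)$, hence $\Delta(\bda,\bdb)\cap\Delta(\bdb,\bdc)=\emptyset$, and then concludes via \Cref{lem:3_1}. Your partial-sum reformulation is in fact the same structure in disguise: the sorting signature set $\Delta(\bda,\bdb)$ is exactly $\{t: A_t-B_t=1\}$, so the paper's disjointness statement is your upper bound $A_t-C_t\le 1$, and the nonnegativity is automatic from all three partial-sum sequences being nonnegative. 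The gain from your phrasing is that you sidestep \Cref{lem:sss_separate} entirely, replacing a case analysis and disjointness argument with two one-line inequality chains; the mild cost is that you must first verify the equivalence \textup{(\texttt{S})} $\iff$ $A_t-C_t\in\{0,1\}$ for all $t$, which you correctly identify as mechanical (the partial sum starts and ends at $0$, steps lie in $\{-1,0,1\}$, and staying within $\{0,1\}$ forces the strictly alternating $+1,-1$ pattern of \textup{(\texttt{S})}). One small point worth stating explicitly: for part (a) you need $\bda>_{\lex}\bdc$ to apply \Cref{lem:3_1}, but this follows from transitivity of $>_{\lex}$ along $\bda>_{\lex}\bdb>_{\lex}\bdc$, so there is no gap.
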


\begin{proof}
    By symmetry, we will only consider the first case. Since $\Delta(\bda,\bdb)\cap\Delta(\bdb,\bdd)=\emptyset$ while $\Delta(\bdb,\bdc)\subseteq \Delta(\bdb,\bdd)$ by \Cref{lem:sss_separate}, we have $\Delta(\bda,\bdb)\cap\Delta(\bdb,\bdc)=\emptyset$. Then, we can apply \Cref{lem:3_1} and \Cref{lem:sss_separate} to show that $\sort(\bda,\bdc)=(\bda,\bdc)$ with $\Delta(\bda,\bdc) =\Delta(\bda,\bdb)\sqcup \Delta(\bdb,\bdc)$. Whence, $\{\bda>_{\lex}\bdb>_{\lex}\bdc >_{\lex}\bdd\}$ is a clique in $\calG$.
\end{proof}

We know from \Cref{prop:max_clique_size} that every maximal clique has cardinality $n$. Additional information can be drawn here. 

\begin{Corollary}
    \label{cor:Delta_maximal_clique}
    For each maximal clique $\{\bda^1>_{\lex}\cdots>_{\lex}\bda^n\}$ in
    $\calG$, we have $\length(\Delta(\bda^i,\bda^j))=j-i$ when $1\le i < j \le
    n$. Furthermore, $\Delta(\bda^1,\bda^n) = [1,n)$. In other words, $a_1^n=
    a_1^1-1$, $a_n^n=a_n^1+1$ and $a_i^n=a_i^1$ for $i\in \{2,\dots, n-1\}$.
\end{Corollary}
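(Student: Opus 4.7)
The plan is to combine the disjoint-additivity from \Cref{lem:sss_separate} with the trivial length bound coming from $\Delta(\bda^i,\bda^j) \subseteq [1,n)$. Since $\{\bda^1 >_{\lex} \cdots >_{\lex} \bda^n\}$ is a clique, every triple inside it is sorted. A straightforward induction on $m$, applied at each step to the sorted triple $\{\bda^1 >_{\lex} \bda^{m-1} >_{\lex} \bda^m\}$, yields
\[
\Delta(\bda^i,\bda^j) \;=\; \bigsqcup_{k=i}^{j-1} \Delta(\bda^k,\bda^{k+1}) \qquad \text{for all } 1 \le i < j \le n,
\]
so in particular $\length$ is additive along the chain.

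Next I would pin down each individual length. On the lower side, $\bda^i \ne \bda^{i+1}$ forces condition (\texttt{S}) to involve at least one index pair $i_{2j-1} < i_{2j}$, so $\length(\Delta(\bda^i,\bda^{i+1})) \ge 1$. On the upper side, writing $\Delta(\bda^1,\bda^n) = \bigcup_{j=1}^k [i_{2j-1}, i_{2j})$ with $1 \le i_1 < i_2 < \cdots < i_{2k} \le n$, the strict inequalities $i_{2j} < i_{2j+1}$ give
\[
\length(\Delta(\bda^1,\bda^n)) \;=\; (i_{2k} - i_1) - \sum_{j=1}^{k-1} (i_{2j+1} - i_{2j}) \;\le\; (n-1) - (k-1) \;=\; n - k.
\]
Combining these with the additivity of the first display, one obtains the sandwich $n-1 \le \length(\Delta(\bda^1,\bda^n)) \le n-k$, which forces $k = 1$, $(i_1, i_2) = (1, n)$, and therefore $\length(\Delta(\bda^i, \bda^{i+1})) = 1$ for every $i$.

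The claim $\length(\Delta(\bda^i,\bda^j)) = j-i$ then follows immediately from the first display by summing. Moreover, $k = 1$ with $(i_1, i_2) = (1, n)$ means exactly that $\Delta(\bda^1, \bda^n) = [1, n)$, and plugging this into condition (\texttt{S}) yields the coordinate equalities $a^n_1 = a^1_1 - 1$, $a^n_n = a^1_n + 1$, and $a^n_t = a^1_t$ for $2 \le t \le n-1$. I do not foresee any serious obstacle here; the only delicate point is the sandwich inequality above, which critically uses the strictness $i_{2j} < i_{2j+1}$ built into the definition of the sorting signature set.
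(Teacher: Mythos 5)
Your proof is correct and follows essentially the same route as the paper's: both rest on the disjoint-additivity of $\Delta$ along the chain (Lemma~\ref{lem:sss_separate}) combined with the containment $\Delta(\bda^1,\bda^n)\subseteq[1,n)$, which squeezes every $\length(\Delta(\bda^i,\bda^{i+1}))$ down to $1$. The only small difference is that you derive the refined bound $\length\le n-k$, whereas the paper simply uses the fact that any disjoint union of intervals inside $[1,n)$ has total length at most $n-1$; both give the same conclusion.
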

\begin{proof}
    Since $\Delta(\bda^1,\bda^n)\subseteq [1,n)$ and
    $\length(\Delta(\bda^1,\bda^n))\ge \sum_{i=1}^{n-1} \length(\Delta(\bda^i,
    \bda^{i+1}))\ge n-1$ by \Cref{lem:sss_separate}, the first two statements are clear.
    The last statement follows from the definition of the sorting signature
    set.
\end{proof}

\begin{Corollary}
    \label{cor:maximal_clique_shift_right}
    Let $\{\bda^1>_{\lex}\bda^2>_{\lex}\cdots >_{\lex}\bda^n
    >_{\lex}\bda^{n+1}\}$ be a set of vertices in $\calG$. Suppose that
    $\{\bda^1>_{\lex}\bda^2 >_{\lex} \cdots >_{\lex}\bda^n\}$ is a maximal
    clique of $\calG$ and $\Delta(\bda^1,\bda^2)=\Delta(\bda^n,\bda^{n+1})$.
    Then, $\{\bda^2>_{\lex}\cdots >_{\lex}\bda^n >_{\lex} \bda^{n+1}\}$ is
    also a maximal clique of $\calG$.
\end{Corollary}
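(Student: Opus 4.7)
The plan is to prove that $\{\bda^2 >_{\lex} \cdots >_{\lex} \bda^n >_{\lex} \bda^{n+1}\}$ is a clique of cardinality $n$; then \Cref{prop:max_clique_size} forces it to be maximal, since a strict enlargement would produce a clique of size $>n$ and contradict that proposition. The pairs inside $\{\bda^2,\ldots,\bda^n\}$ are sorted because they are inherited from the given maximal clique, and $(\bda^n,\bda^{n+1})$ is sorted by hypothesis, so only the pairs $(\bda^i,\bda^{n+1})$ for $2\le i\le n-1$ remain to be verified.

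First I would read off the combinatorics of the given maximal clique. By \Cref{cor:Delta_maximal_clique} together with iterated application of \Cref{lem:sss_separate}, the sets $\Delta(\bda^j,\bda^{j+1})$ partition $\Delta(\bda^1,\bda^n)=[1,n)$ into $n-1$ singletons $\{k_j\}$, so $(k_1,\ldots,k_{n-1})$ is a permutation of $\{1,\ldots,n-1\}$ and each step is the unit translation $\bda^{j+1}=\bda^j-\bde_{k_j}+\bde_{k_j+1}$. The hypothesis $\Delta(\bda^n,\bda^{n+1})=\Delta(\bda^1,\bda^2)=\{k_1\}$ then gives $\bda^{n+1}=\bda^n-\bde_{k_1}+\bde_{k_1+1}$.

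The key computation is the telescope
\[
    \bda^{n+1}-\bda^i \;=\; -\bde_{k_1}+\bde_{k_1+1}+\sum_{t=i}^{n-1}\bigl(-\bde_{k_t}+\bde_{k_t+1}\bigr),
\]
valid for $2\le i\le n-1$. The delicate point, and the only place where the hypothesis $i\ge 2$ is really used, is the cancellation bookkeeping: since $k_1,k_i,\ldots,k_{n-1}$ are pairwise distinct, the set $D\coloneqq\{k_1,k_i,k_{i+1},\ldots,k_{n-1}\}\subseteq[1,n-1]$ has cardinality $n-i+1$, and the coefficient of $\bde_p$ in $\bda^{n+1}-\bda^i$ equals $[p-1\in D]-[p\in D]$. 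As $p$ increases, the nonzero coefficients appear in the strictly alternating pattern $-1,+1,-1,+1,\ldots$, occurring at the left endpoints and at the positions one step past the right endpoints of the maximal runs of $D$. This is exactly condition~\textup{(\texttt{S})}, and since $\bda^i>_{\lex}\bda^n>_{\lex}\bda^{n+1}$ automatically holds, \Cref{lem:3_1} concludes that $(\bda^i,\bda^{n+1})$ is sorted. As a sanity check, $|D|=n-i+1$ matches the length $(n+1)-i$ predicted by \Cref{cor:Delta_maximal_clique} for the new clique.
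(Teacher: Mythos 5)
Your proof is correct, but it follows a genuinely different route from the paper's. The paper argues structurally: it first computes $\Delta(\bda^2,\bda^{n+1})=\Delta(\bda^2,\bda^n)\sqcup\Delta(\bda^n,\bda^{n+1}) =\Delta(\bda^2,\bda^n)\sqcup\Delta(\bda^1,\bda^2)=[1,n)$, concludes from \Cref{lem:3_1} that the single extremal pair $(\bda^2,\bda^{n+1})$ is an edge, and then invokes the clique-gluing \Cref{lem:3_4} to merge the old clique $\{\bda^2,\ldots,\bda^n\}$ with the edge $\{\bda^n,\bda^{n+1}\}$, so that the intermediate pairs $(\bda^i,\bda^{n+1})$ never need to be examined individually. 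You instead bypass \Cref{lem:3_4} entirely and verify condition (\texttt{S}) by hand for \emph{each} pair $(\bda^i,\bda^{n+1})$, via the telescoping identity and the bookkeeping on the index set $D=\{k_1,k_i,\ldots,k_{n-1}\}$. Both arguments are complete; the trade-off is that the paper's version is shorter and leans on the already-established toolbox, whereas yours is more self-contained and makes the underlying coordinate arithmetic explicit (and in particular re-derives, as a special case $i=2$, the $\Delta(\bda^2,\bda^{n+1})=[1,n)$ fact that drives the paper's argument). Your sanity check $|D|=n-i+1$ against the length predicted by \Cref{cor:Delta_maximal_clique} is a nice touch.
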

\begin{proof}
    We have $\Delta(\bda^2,\bda^n)\sqcup
    \Delta(\bda^n,\bda^{n+1})=\Delta(\bda^2,\bda^n)\sqcup
    \Delta(\bda^1,\bda^{2})=\Delta(\bda^1,\bda^n)=[1,n)$ by
    \Cref{cor:Delta_maximal_clique}. Thus, we can verify by definition that
    $\Delta(\bda^2,\bda^{n+1})=[1,n)$. In particular,
    $\{\bda^2>_{\lex}\bda^{n+1}\}$ is an edge of $\calG$ by \Cref{lem:3_1}.
    Now, as $\{\bda^2>_{\lex} \cdots >_{\lex} \bda^n\}$ is a clique and
    $\{\bda^n >_{\lex} \bda^{n+1}\}$ is an edge, $\{\bda^2>_{\lex}\cdots
    >_{\lex} \bda^n>_{\lex}\bda^{n+1}\}$ is also a maximal clique of $\calG$
    by \Cref{lem:3_4}.
\end{proof}

\section{Linear Quotients}
\label{Sec:Shelling}

In this section, we continue to assume that $\calA_{d,\bdalpha}$ is the Veronese type algebra of \Cref{set:ideal_veronese} and $\bdeta$ is the tuple given in \Cref{eta}.  As explained at the beginning of Section \ref{maxCliques}, we intend to show that the Alexander dual ideal $(\ini(J))^\vee$ has linear quotients. This tactic allows us to have more control over its minimal free resolution. In particular, we can explicitly calculate the Castelnuovo--Mumford regularity of $\calA_{d,\bdalpha}$ in \Cref{Sec:app}, and also give a reasonable upper bound on its multiplicity. 

To prove the linear quotient property, we need to impose a total order $\prec$ on the minimal monomial generating set $G(\ini(J)^\vee)$, such that with respect to this order, the ideal $\ini(J)^\vee$ has linear quotients.  Let $\MC(\calG)$ be the set of maximal cliques of the graph $\calG=\calG(d,\bdalpha)$. As observed in \Cref{rmk:maximal_cliques}\ref{rmk:maximal_cliques_b}, 
\[
    G(\ini(J)^\vee) = \Set{\bdT_{A^\complement}\coloneqq \prod_{v\in
    V_{n,d}^{\bdalpha}\setminus A}T_v : A\in \MC(\calG)}.
\]
Thus, we will consider the corresponding total order, still denoted by
$\prec$, on $\MC(\calG)$. By definition, we want to show that the quotient
ideal $\braket{\bdT_{B^\complement} : B\prec A}: \bdT_{A^\complement}$ is
generated by some ring variables of $\KK[\bdT]$ for each $A\in \MC(\calG)$.
Notice that
\begin{equation}
    \label{translate_lq_sets}
    \braket{\bdT_{B^\complement}}:\bdT_{A^\complement}=
    \prod_{v\in A\setminus B}T_v.
\end{equation}
Therefore, for any given maximal clique $B$ with $B\prec A$, it suffices to
find a suitable maximal clique $D$ with $D\prec A$ such that $A\setminus D$ is
a singleton set with $A\setminus D \subseteq A\setminus B$.

Unfortunately, the total order $\prec$ introduced here is really involved. We have to postpone its debut to \Cref{Order}. Before that, we need to make some preparations.

\begin{Definition}
    \label{def:ranks}
    \begin{enumerate}[a]
        \item A priori, a maximal clique of the graph $\calG$ is a set of vertices.  When we write a maximal clique $A=(\bda^1, \bda^2, \dots, \bda^n)$ in the tuple form, we intend to indicate that $\bda^1>_\lex \bda^2>_\lex \cdots >_\lex \bda^n$.

        \item Two maximal cliques $A=(\bda^1,\bda^2,\dots,\bda^n)$ and $B=(\bdb^1,\dots,\bdb^n)$ are called \emph{equivalent}, if and only if $\bda^1=\bdb^1$. It follows from \Cref{cor:Delta_maximal_clique} that this is also equivalent to saying that $\bda^n=\bdb^n$. With respect to this binary relation, we have \emph{equivalence classes}. We will write $\calE_A$ for the equivalence class to which $A$ belongs.

        \item \label{item:ranks_c}
            The \emph{rank} of a tuple $\bda=(a_1,\dots,a_n)\in V_{n,d}$ 
            (with respect to the given tuple $\bdeta$)
            is defined to be
            \[
                \rank(\bda)\coloneqq \sum_{j=1}^{n-1}(a_j-\eta_j)(n-j).
            \]
            It is clear that $\rank(\bdeta)=0$. Furthermore, if
            $\bda>_{\lex}\bdb$ belong to some common maximal clique of
            $\calG$, then it is easy to verify directly that
            $\rank(\bda)= \rank(\bdb)+ \length(\Delta(\bda,\bdb))$.

        \item For each maximal clique $A=(\bda^1,\dots,\bda^n)$, we define the
            \emph{rank} of $\calE_{A}$ to be $\rank(\bda^n)$. This is
            well-defined, since if $B=(\bdb^1,\dots,\bdb^n)$ and $A$ are
            equivalent, then $\bdb^n=\bda^n$.

        \item Suppose that $A=(\bda^1,\dots,\bda^n)$, where 
            $\bda^i=(a_1^i,\dots,a_n^i)\in \ZZ^n$ and $\sum_ka_k^i=d$ for each $i\in [n]$.
            Assume that there exists a permutation $(s_1,\dots,s_{n-1})$ of $\{1,2,\dots,n-1\}$, written in one-line notation, such that $\Delta(\bda^i,\bda^{i+1})=[s_i,s_i+1)$ for $i\in [n-1]$. Then we say the tuple $(s_1,\dots,s_{n-1})$ is the \emph{signature} of $A$ and denote it by $\sgn(A)$. Of course, we are mostly interested in the case where $A$ is a maximal clique of $\calG$. Whence, by \Cref{lem:sss_separate} and \Cref{cor:Delta_maximal_clique}, the signature of $A$ must exist.

        \item Let $A=(\bda^1,\dots,\bda^n)$ be a maximal clique. If there
            exists a maximal clique $B=(\bdb^1,\dots,\bdb^n)$ such that
            $(\bda^2,\dots,\bda^n)=(\bdb^1,\dots,\bdb^{n-1})$, then we will
            say that $B$ is the \emph{root} of $A$ and write $B=rA$. It
            follows from \Cref{cor:Delta_maximal_clique} that if the root of
            $A$ exists, then it is unique.
    \end{enumerate}
\end{Definition}

\begin{Remark}
    \begin{enumerate}[a]
        \item For any $\bda\in V_{n,d}^\bdalpha$, if $\bda\ne \bdeta$, we can find $\bdb\in V_{n,d}^\bdalpha$ such that $\bda>_{\lex}\bdb\ge_{\lex} \bdeta$, $\sort(\bda,\bdb)=(\bda,\bdb)$, and $\length(\bda,\bdb)=1$. If we use this repeatedly, we can find by induction $\bdb^0,\bdb^1,\dots,\bdb^m\in V_{n,d}^\bdalpha$ such that $\bdb^0=\bda$, $\bdb^m=\bdeta$ and $\rank(\bdb^i)=\rank(\bdb^{i+1})+1$ for each $i$. In particular, $\rank(\bda)=m\in \NN$.

        \item For each maximal clique $A$, we have $\rank(A)\in \NN$. 

        \item If $B=rA$, then $\rank(A)=\rank(B)+1$.

        \item We have precisely one equivalence class $\calE_A$ such that $\rank(\calE_A)=0$. If $B=(\bdb^1,\dots,\bdb^n)$ belongs to such $\calE_A$, then $\bdb^n=\bdeta$.
    \end{enumerate}
\end{Remark}

Here is the converse of \Cref{cor:Delta_maximal_clique}.

\begin{Lemma}
    \label{lem:legal_initial_tuple}
    Let $\bda^1=(a_1^1,\dots,a_n^1)$ be a tuple in $V_{n,d}^{\bdalpha}$.
    Suppose that
    $1 \le a_1^1$ while $a_n^1 \le \alpha_n-1$. Then, there exists a maximal
    clique in $\calG$ of the form $\Set{\bda^1>_{\lex} \cdots
    >_{\lex}\bda^n}$.
\end{Lemma}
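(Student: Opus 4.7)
The plan is to build the clique one step at a time, where each consecutive pair of tuples differs by a single ``adjacent swap,'' and then verify the clique and maximality properties using \Cref{lem:3_1} and \Cref{prop:max_clique_size}. First, I would define the terminal tuple $\bda^n \coloneqq (a_1^1 - 1, a_2^1, \ldots, a_{n-1}^1, a_n^1 + 1)$, which lies in $V_{n,d}^{\bdalpha}$ by the hypotheses $a_1^1 \ge 1$ and $a_n^1 \le \alpha_n - 1$. The main task then becomes finding a permutation $(s_1, \ldots, s_{n-1})$ of $\{1, \ldots, n-1\}$ such that defining $\bda^{i+1}$ from $\bda^i$ by decreasing coordinate $s_i$ by one and increasing coordinate $s_i + 1$ by one keeps every intermediate $\bda^i$ inside $V_{n,d}^{\bdalpha}$. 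Since each $k \in \{2, \ldots, n-1\}$ appears exactly once in the permutation as $s_\ell$ (contributing $-1$ to coordinate $k$) and once as $s_\ell + 1$ (contributing $+1$), such a chain lands precisely at the $\bda^n$ above.

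The hard part is scheduling the swaps so that all intermediate coordinates stay within $[0, \alpha_k]$. At any intermediate stage, coordinate $k \in \{2, \ldots, n-1\}$ takes a value in $\{a_k^1 - 1,\, a_k^1,\, a_k^1 + 1\}$, so a violation can only occur when $a_k^1 \in \{0, \alpha_k\}$. My plan is to encode the resulting scheduling constraints as a directed graph $D$ on vertex set $\{1, \ldots, n-1\}$, adding an arc $(k-1) \to k$ whenever $a_k^1 = 0$ and an arc $k \to (k-1)$ whenever $a_k^1 = \alpha_k$, for each $k \in \{2, \ldots, n-1\}$. To see $D$ is acyclic, I would argue that a $2$-cycle between $k-1$ and $k$ would require $a_k^1 = 0 = \alpha_k$, contradicting $\alpha_k \ge \alpha_1 \ge 1$, while longer cycles are impossible since arcs of $D$ connect only consecutive integers along the path $1, 2, \ldots, n-1$. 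Hence any topological sort of $D$ furnishes a valid permutation.

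Finally, to check that the resulting sequence $\{\bda^1 >_{\lex} \bda^2 >_{\lex} \cdots >_{\lex} \bda^n\}$ is a clique in $\calG$, I would observe that for any $i < j$ the coordinates where $\bda^i$ and $\bda^j$ differ correspond to the endpoints of the maximal runs of consecutive integers in $S = \{s_i, \ldots, s_{j-1}\}$: each run $[p,q] \subseteq S$ contributes change $-1$ at position $p$ and change $+1$ at position $q+1$. This automatically produces the alternating $-1, +1, -1, +1, \ldots$ sign pattern required by condition (\texttt{S}), so \Cref{lem:3_1} gives $\sort(\bda^i, \bda^j) = (\bda^i, \bda^j)$. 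Since the resulting clique has cardinality $n$, maximality is immediate from \Cref{prop:max_clique_size}.
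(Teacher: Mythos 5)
Your proof is correct, but it takes a genuinely different route than the paper's. The paper's proof is quite short: having defined $\bda^n = (a_1^1 - 1, a_2^1, \dots, a_{n-1}^1, a_n^1 + 1) \in V_{n,d}^{\bdalpha}$, it simply observes that the pair $\{\bda^1 >_{\lex} \bda^n\}$ is already a clique with $\Delta(\bda^1,\bda^n)=[1,n)$, invokes the general fact that every clique extends to a maximal one, and then uses \Cref{prop:max_clique_size} together with the rank-additivity formula $\rank(\bda)=\rank(\bdb)+\length(\Delta(\bda,\bdb))$ to force $\bda^1$ and $\bda^n$ to be the lex-extremes of this maximal clique. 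You instead build the chain $\bda^1 >_{\lex} \cdots >_{\lex} \bda^n$ explicitly, scheduling the adjacent-swap jumps by a topological sort of a constraint DAG, then verifying sortedness for all pairs via the run-endpoint description of $\Delta(\bda^i,\bda^j)$. Your approach is more constructive and essentially proves (a special case of) what the paper packages later as \Cref{rmk:legal_signature}: your DAG is the poset of obstructions of \Cref{def:partial_order}. The trade-off is length versus explicitness — the paper leans on the heavier machinery already established (Cohen--Macaulayness giving the size-$n$ cardinality of maximal cliques), whereas you pay with a more involved scheduling argument but gain an explicit construction.

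Two minor points worth tightening. First, in your acyclicity argument, the statement ``longer cycles are impossible since arcs connect only consecutive integers along the path'' deserves one more line: in a directed graph whose underlying undirected graph is a path, any directed closed walk would have to traverse some edge in both directions, but you never place both arcs on the same edge. Second, the final maximality step is slightly glossed: \Cref{prop:max_clique_size} says every maximal clique has cardinality $n$, which by itself does not say a clique of cardinality $n$ is maximal; you should add that the clique you built is contained in some maximal clique, that maximal clique has cardinality $n$, hence the two coincide. Both are routine, but worth saying.
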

\begin{proof}
    It is clear that $\bda^n \coloneqq (a_1^1-1, a_2^1, \dots, a_{n-1}^1, a_n^1+1)$ belongs to $V_{n,d}^{\bdalpha}$, by assumption. Notice that $\{\bda^1>_{\lex}\bda^n\}$ is a clique in $\calG$ and $\Delta(\bda^1, \bda^n)=[1,n)$. Thus, we can complete $\{\bda^1>_{\lex} \bda^n\}$ to a maximal clique. This maximal clique must have the form $\Set{\bda^1>_{\lex} \cdots >_{\lex}\bda^n}$, due to \Cref{prop:max_clique_size} and the rank reason given at the end of \Cref{def:ranks}\ref{item:ranks_c}.
\end{proof}

Next, we describe a necessary and sufficient condition for a given maximal clique $A$ with $\rank(A) > 0$ and such that $rA$ does not exist. This characterization ensures that we pick the correct element when ordering for the linear quotients.

\begin{Lemma}
    \label{lem:rA_not_exists}
    Suppose that $A=(\bda^1,\dots,\bda^n)$ is a maximal clique such that $\bda^1=(a_1^1,\dots,a_n^1)$ and $\sgn(A)=(s_1,\dots,s_{n-1})$. If $\rank(A)>0$, then $rA$ does not exist, if and only if $a_1^1=1$ and $s_1=1$, or $a_n^1=\alpha_n-1$ and $s_1=n-1$.
\end{Lemma}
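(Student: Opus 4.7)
The plan is to reduce the existence of $rA$ to coordinate feasibility of a unique candidate tuple $\bdb^n$, then to run a short case analysis on $s_1$.

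I will first observe that if $rA = (\bdb^1, \dots, \bdb^n)$ exists, then $\bdb^i = \bda^{i+1}$ for $i < n$ by definition, so only $\bdb^n$ needs to be pinned down. Applying \Cref{cor:Delta_maximal_clique} to $rA$ gives $\Delta(\bdb^1, \bdb^n) = [1, n)$. Combined with \Cref{lem:sss_separate} and the fact that $\sgn(A)$ is a permutation of $[n-1]$ (so $\Delta(\bda^2, \bda^n) = \bigsqcup_{j=2}^{n-1}[s_j, s_j+1)$), this forces $\Delta(\bda^n, \bdb^n) = [s_1, s_1+1)$. Condition (\texttt{S}) then uniquely determines $\bdb^n$ as the tuple obtained from $\bda^n$ by decreasing the $s_1$-th coordinate by $1$ and increasing the $(s_1+1)$-th coordinate by $1$. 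Conversely, once this candidate lies in $V_{n,d}^{\bdalpha}$, \Cref{cor:maximal_clique_shift_right}---applied to $A$ together with the extra vertex $\bdb^n$ and the key matching $\Delta(\bda^1, \bda^2) = [s_1, s_1+1) = \Delta(\bda^n, \bdb^n)$---yields that $rA$ is a maximal clique. So $rA$ exists if and only if $a_{s_1}^n \geq 1$ and $a_{s_1+1}^n \leq \alpha_{s_1+1} - 1$.

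Next, using \Cref{cor:Delta_maximal_clique} on $A$ itself, I will translate these constraints to $\bda^1$-coordinates: $a_1^n = a_1^1 - 1$, $a_n^n = a_n^1 + 1$, and $a_i^n = a_i^1$ for $2 \leq i \leq n - 1$. In parallel, since $\bda^2 \in V_{n,d}^{\bdalpha}$ and $\Delta(\bda^1, \bda^2) = [s_1, s_1+1)$, one always has $a_{s_1}^1 \geq 1$ and $a_{s_1+1}^1 \leq \alpha_{s_1+1} - 1$. A three-way split on $s_1$ now finishes the proof: if $2 \leq s_1 \leq n - 2$, both inequalities hold automatically and $rA$ always exists; if $s_1 = 1$, only the inequality $a_1^1 - 1 \geq 1$ is nontrivial, failing exactly when $a_1^1 = 1$; if $s_1 = n - 1$, symmetrically only $a_n^1 + 1 \leq \alpha_n - 1$ is nontrivial, failing exactly when $a_n^1 = \alpha_n - 1$.

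The main obstacle I anticipate is justifying that coordinate validity of $\bdb^n$ suffices to produce a maximal clique (not just a valid vertex in $\calG$); \Cref{cor:maximal_clique_shift_right} is the right hammer here, precisely because the signature matching $\Delta(\bda^1, \bda^2) = \Delta(\bda^n, \bdb^n) = [s_1, s_1+1)$ is built into the construction of $\bdb^n$ from $\sgn(A)$.
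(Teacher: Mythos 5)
Your proof is correct and follows essentially the same strategy as the paper: pin down the unique candidate for the new last vertex, reduce existence of $rA$ to coordinate feasibility of that candidate via \Cref{cor:maximal_clique_shift_right}, and then translate into $\bda^1$-coordinates using \Cref{cor:Delta_maximal_clique}. The only cosmetic difference is that the paper parametrizes the candidate $\bda^{n+1}$ via $\Delta(\bda^2,\bda^{n+1})=[1,n)$ and phrases feasibility in terms of $a_1^2$ and $a_n^2$, while you parametrize $\bdb^n$ via $\Delta(\bda^n,\bdb^n)=[s_1,s_1+1)$ and phrase feasibility in terms of $a_{s_1}^n$ and $a_{s_1+1}^n$; these are the same tuple and the same condition, written from opposite ends.
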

\begin{proof}
    Suppose that $\bda^2=(a_1^2,\dots,a_n^2)$. Let $\bda^{n+1}$ be the tuple such that $\Delta(\bda^2,\bda^{n+1})=[1,n)$. It is easy to see that $\bda^{n+1}=(a_1^2-1,a_2^2,\dots,a_{n-1}^2,a_n^2+1)$ and $\Delta(\bda^n,\bda^{n+1})= [s_1,s_1+1)$.  Then, $rA$ does not exist if and only if $\bda^2,\dots,\bda^n,\bda^{n+1}$ is not a legitimate maximal clique. By \Cref{cor:maximal_clique_shift_right}, 
    the latter happens precisely when $\bda^{n+1}\notin V_{n,d}^{\bdalpha}$, which means either $a_1^2=0$ or $a_n^2=\alpha_n$.
    \begin{enumerate}[a]
        \item Suppose that $a_1^2=0$. We claim that $s_1=1$. Otherwise, $s_{j}=1$ for some $2\le j\le n-1$. This implies that $\bda^k=(a_1^2,\dots)$ for $k=1,\dots,j$, and $\bda^k=(a_1^2-1,\dots)$ for $k=j+1,\dots,n$. But as $a_1^2-1=-1$ in this case, we have a contradiction. Now, since $s_1=1$, it is clear that $a_1^1=1$.
        \item Suppose that $a_n^2=\alpha_n$. The argument is similar.
    \end{enumerate}

    Conversely, if $a_1^1=1$ and $s_1=1$, or $a_n^1=\alpha_n-1$ and $s_1=n-1$,
    then $a_1^2=0$ or $a_n^2=\alpha_n$. By the argument at the beginning of
    this proof, $rA$ does not exist.
\end{proof}

To facilitate exhibiting the claimed linear quotients property, we need the
subsequent handy tool.

\begin{Lemma}
    \label{one_different}
    Let $A=(\bda^1,\ldots,\bda^n)$ and $B=(\bdb^1,\dots,\bdb^n)$ be two tuples of elements in $\ZZ^n$ such that $\bda^1=\bdb^1$ and $\bda^n=\bdb^n$. Suppose that $\sgn(A)= (s_1,\ldots,s_{n-1})$ 
    and $\sgn(B)=(t_1,\ldots,t_{n-1})$. 
    Then, the following conditions are equivalent for an index $k\in [n-2]$:
    \begin{enumerate}[a]
        \item \label{oneSwitch}
            the signature $\sgn(B)$ takes the form $(s_1, \ldots,s_{k-1},
            s_{k+1},s_{k}, s_{k+2},\dots,s_{n-1})$;
        \item \label{DiffOne}
            the difference set $\diff(A,B)\coloneqq \{i: \bda^i \ne
            \bdb^{i}\}$ is exactly $\{k+1\}$.
    \end{enumerate}
    If in addition $A$ and $B$ are two maximal cliques in an equivalence class $\calE$, then the following is an additional equivalent condition:
    \begin{enumerate}[resume*]
        \item \label{QuotientOne}
            the quotient ideal $\braket{\bdT_{B^\complement}} :_{\KK[\bdT]}
            \bdT_{A^\complement}$ is generated by $T_{\bda^{k+1}}$.
    \end{enumerate}
\end{Lemma}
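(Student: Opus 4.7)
The plan is to establish the equivalences in two stages: first (a)$\Leftrightarrow$(b) via a combinatorial analysis of the sorting signature, and then (b)$\Leftrightarrow$(c) by combining \eqref{translate_lq_sets} with the distinctness of ranks along a maximal clique.

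For (a)$\Leftrightarrow$(b), the key observation is that a transition of signature $s$ corresponds to the elementary translation $m_s\colon \bdv\mapsto \bdv+\bde^{s+1}-\bde^s$ of $\ZZ^n$, and these translations commute pairwise. In the forward direction, since $\bdb^1=\bda^1$ and $t_i=s_i$ for $i<k$, I obtain $\bdb^i=\bda^i$ for $i\le k$ by induction; then $\bdb^{k+1}=m_{s_{k+1}}(\bda^k)\ne m_{s_k}(\bda^k)=\bda^{k+1}$ because $s_k\ne s_{k+1}$, while commutativity yields $\bdb^{k+2}=m_{s_k}m_{s_{k+1}}(\bda^k)=m_{s_{k+1}}m_{s_k}(\bda^k)=\bda^{k+2}$; and $t_i=s_i$ for $i>k+1$ propagates the agreement up to $\bdb^n=\bda^n$. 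In the reverse direction, the equalities $\bdb^k=\bda^k,\;\bdb^{k+2}=\bda^{k+2}$ together with $\bdb^{k+1}\ne\bda^{k+1}$ force $\{t_k,t_{k+1}\}$ to realize the same composite translation $\bda^{k+2}-\bda^k$ as $\{s_k,s_{k+1}\}$; a short coordinate inspection shows that an unordered pair $\{a,b\}\subseteq[n-1]$ with $a\ne b$ is recovered uniquely from $(\bde^{a+1}-\bde^a)+(\bde^{b+1}-\bde^b)$, so $\{t_k,t_{k+1}\}=\{s_k,s_{k+1}\}$; then $t_k\ne s_k$ forces the swap. The remaining equalities $t_i=s_i$ for $i\notin\{k,k+1\}$ are immediate from the position-wise agreements.

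For (b)$\Leftrightarrow$(c), the forward implication is automatic from \eqref{translate_lq_sets}: the unique position of disagreement gives $A\setminus B=\{\bda^{k+1}\}$ as a set. For the converse, the hypothesis translates into $A\setminus B=\{\bda^{k+1}\}$ as sets, and I promote this to the positional statement $\diff(A,B)=\{k+1\}$ using \Cref{def:ranks}\ref{item:ranks_c} together with \Cref{cor:Delta_maximal_clique}: within any maximal clique the ranks strictly decrease by $1$, so from $\bdb^1=\bda^1$ I obtain $\rank(\bdb^i)=\rank(\bda^1)-(i-1)=\rank(\bda^i)$. Since the ranks $\rank(\bda^1),\dots,\rank(\bda^n)$ are pairwise distinct, every $\bdb^i$ belonging to $A$ must equal the unique $\bda^j$ of matching rank, namely $\bda^i$. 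The single index $i_0$ at which $\bdb^{i_0}\in B\setminus A$ is therefore forced by $A\setminus B=\{\bda^{k+1}\}$ to satisfy $i_0=k+1$.

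The most delicate step is the uniqueness claim in the reverse direction of (a)$\Leftrightarrow$(b) when the swapped signature positions $s_k,s_{k+1}$ are adjacent integers: their elementary-move supports overlap and the sum $(\bde^{s_k+1}-\bde^{s_k})+(\bde^{s_{k+1}+1}-\bde^{s_{k+1}})$ collapses to a two-term vector. A small separate case analysis handles this regime, but it is the one place in the argument that requires genuine verification beyond a one-line appeal to linear algebra.
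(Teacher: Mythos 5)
Your proof is correct, and the overall structure — (a)$\Leftrightarrow$(b) by comparing signatures, then (b)$\Leftrightarrow$(c) via \eqref{translate_lq_sets} — matches the paper's. The one place you take a genuinely different (and more laborious) route is in (b)$\Rightarrow$(a): you recover $\{t_k,t_{k+1}\}=\{s_k,s_{k+1}\}$ by arguing that the composite translation $\bda^{k+2}-\bda^k=(\bde^{s_k+1}-\bde^{s_k})+(\bde^{s_{k+1}+1}-\bde^{s_{k+1}})$ uniquely determines the unordered pair, which, as you honestly flag, requires a separate case when $|s_k-s_{k+1}|=1$. The paper avoids this entirely: by \Cref{def:ranks}, both $\sgn(A)$ and $\sgn(B)$ are permutations of $[n-1]$, so once $s_i=t_i$ for every $i\notin\{k,k+1\}$ (which you establish at the end of your reverse argument from the position-wise agreements), the set equality $\{s_k,s_{k+1}\}=\{t_k,t_{k+1}\}$ follows for free, and $A\neq B$ forces the swap. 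If you reorder your reverse argument to derive the agreements $t_i=s_i$ for $i\notin\{k,k+1\}$ first and then invoke the permutation structure, the "delicate step" you flag disappears. On (b)$\Leftrightarrow$(c) the paper treats the implication as immediate from \eqref{translate_lq_sets}; your rank-matching argument, using that $\rank(\bda^i)=\rank(\bda^1)-(i-1)$ is strictly decreasing along a maximal clique (from \Cref{def:ranks}\ref{item:ranks_c} and \Cref{cor:Delta_maximal_clique}) to promote set-equality $A\setminus B=\{\bda^{k+1}\}$ to positional equality $\diff(A,B)=\{k+1\}$, is a correct and useful expansion of what the paper leaves implicit.
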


\begin{proof}
    Firstly, we show that \ref{oneSwitch} $\Rightarrow$ \ref{DiffOne}. By assumption, we have $\bda^1=\bdb^1$ and $s_i=t_i$ for $i=1,2,\dots,k-1$. Since
    \[
        \Delta(\bda^{i},\bda^{i+1})=[s_i,s_i+1)=
        [t_i,t_i+1)=\Delta(\bdb^{i},\bdb^{i+1}),
    \] 
    by induction, we have $\bda^j=\bdb^j$ for $j=1,2,\dots,k$. Similarly, we have $\bda^n=\bdb^n$ and $s_i=t_i$ for $i=n-1,n-2,\dots,k+2$ by assumption. Thus, we also have $\bda^j=\bdb^j$ for $j=n,n-1,\dots,k+2$. On the other hand, since $s_k\ne t_k$ while $\bda^k=\bdb^k$, we have
    \[
        \Delta(\bda^{k},\bda^{k+1})=[s_k,s_k+1)\ne [t_k,t_k+1) = \Delta(\bdb^{k},\bdb^{k+1}),
    \]
    and hence $\bda^{k+1}\neq \bdb^{k+1}$. It is now clear that $\diff(A,B)=\{k+1\}$.

    Secondly, we show that \ref{DiffOne} $\Rightarrow$ \ref{oneSwitch}. By assumption, we have $\bda^j=\bdb^j$ whenever $j\ne k+1$. 
    Since
    \[
        \bigsqcup_{j\le i}\,[s_j,s_j+1) = \Delta(\bda^1,\bda^{i+1}) =
        \Delta(\bdb^1,\bdb^{i+1}) = \bigsqcup_{j\le i}\,[t_j,t_j+1)
    \]
    for all $1\le i \le k-1$, we must have $s_i=t_i$ for each such $i$. Similarly, we have $s_i=t_i$ when $k+2\le i \le n-1$, by looking at $\Delta(\bda^i,\bda^{n})=\Delta(\bdb^i,\bdb^n)$. Notice that $\{s_1,\dots,s_{n-1}\}=\{1,\dots,n-1\}=\{t_1,\dots,t_{n-1}\}$. Therefore, $\{s_k,s_{k+1}\}=\{t_k,t_{k+1}\}$. Since $A\ne B$, there is only one possibility for this: $s_k=t_{k+1}$ and $s_{k+1}=t_k$.

    Finally, assume that $A$ and $B$ are two maximal cliques in $\calE$. The equivalence of \ref{DiffOne} and \ref{QuotientOne} is then clear from \Cref{translate_lq_sets}.
\end{proof}

We need tools to determine whether a potential maximal clique is really legitimate.

\begin{Definition}
    Suppose that $\bda=(a_1,\dots,a_n)\in V_{n,d}^{\bdalpha}$. If there exists some $\bdb\in V_{n,d}^{\bdalpha}$ such that $\Delta(\bda,\bdb) =[s,s+1)$, we say that we apply an \emph{$s$-jump} to $\bda$ in order to get $\bdb$. It is clear that such an operation exists if and only if $a_s>0$ and $a_{s+1}<\alpha_{s+1}$.
\end{Definition}

\begin{Remark}
    \label{rmk:legitimate_jump}
    Suppose that $A=(\bda^1,\dots,\bda^n)$ is a maximal clique and $\sgn(A)= (s_1, \dots, s_{n-1})$. For each $i,j\in [n]$, we have $a_j^i \in \{a_j^1-1, a_j^1,a_j^1+1\}$. The case $a_j^i=a_j^1-1$ occurs if and only if the $j$-jump is applied before $\bda^i$ (i.e., $j=s_k$ for some $k<i$), while the $(j-1)$-jump is not applied before it. Similarly, the case $a_j^i= a_j^1+1$ happens if and only if the $(j-1)$-jump is applied before $\bda^i$, while the $j$-jump is not applied before it.
\end{Remark}

\begin{Definition}
    \label{def:partial_order}
    Let $\calE$ be an equivalence class, in which, every maximal clique starts with $\bda^1=(a^1_1, \dots, a^1_n)$. We consider a partial order $\triangleleft$ on $[n-1]$ with respect to $\calE$ as follows. Suppose that $i\in [n-1]$.
    \begin{itemize}
        \item If $a_{i}^1=0$ and $i>1$, then we require that \underline{$(i-1) \, \triangleleft \, i$}.
        \item Likewise, if $a_{i+1}^1=\alpha_{i+1}$ and $i+1<n$, then we require that \underline{$(i+1) \, \triangleleft \, i$}.
    \end{itemize}
    After considering each $i\in [n-1]$, the underlined parts \emph{generate} a partial order $\triangleleft$ on $[n-1]$.  The induced poset will be called the \emph{poset of obstructions} with respect to $\calE$.
\end{Definition}

The next two lemmas justify the terminology of this poset from the point of view of allowing permutations in $\frakS_{n-1}$ to be legitimate signatures.

\begin{Lemma}
    \label{lem:po}
    The poset of \Cref{def:partial_order} is well-defined. Furthermore, suppose that $A=(\bda^1,\dots,\bda^n)$ is a maximal clique in $\calE$ and $\sgn(A) =(s_1, \dots,s_{n-1})$. Then, the following condition is satisfied:
    \begin{enumerate}
        \item [\textup{(\texttt{PO}):}]
            whenever $s_i\,\triangleleft\, s_j$, we have $i<j$.
    \end{enumerate}
\end{Lemma}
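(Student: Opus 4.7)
The plan is to split the lemma into two parts: first show that the relation $\triangleleft$ generated by the underlined covering pairs extends to a legitimate partial order, and then verify condition \textup{(\texttt{PO})} by checking it on the generators and invoking transitivity.

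For well-definedness, I would observe that the two generator families only relate \emph{adjacent} positions in $[n-1]$: type~(i) produces $i\triangleleft i+1$ precisely when $a_{i+1}^1=0$, and type~(ii) produces $(i+1)\triangleleft i$ precisely when $a_{i+1}^1=\alpha_{i+1}$. Since $\alpha_{i+1}\ge 1$, these two conditions are mutually exclusive, so between each neighbouring pair at most one direct relation is imposed, and its direction is completely determined by $\bda^1$. Any chain yielding a transitive relation must proceed along such adjacencies, so a cycle would force some pair of neighbours to be related in both directions, a contradiction. This gives antisymmetry and hence a genuine partial order.

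For condition \textup{(\texttt{PO})}, by transitivity it suffices to verify $i<j$ whenever $s_i \triangleleft s_j$ comes from a generator. Two cases arise. First, suppose $s_i=s_j-1$ with $a^1_{s_j}=0$. The $s_j$-jump executed at step $j$ decreases the $s_j$-th coordinate by one, so this coordinate must be positive immediately before step $j$. By \Cref{rmk:legitimate_jump}, the only jump that can raise the value at position $s_j$ above its initial value $0$ is an $(s_j-1)$-jump, and since $\sgn(A)$ is a permutation of $[n-1]$ this happens at the unique step $i$ with $s_i=s_j-1$. Hence $i<j$. The second case $s_i=s_j+1$ with $a^1_{s_j+1}=\alpha_{s_j+1}$ is entirely symmetric: the $s_j$-jump raises the $(s_j+1)$-th coordinate, which must still lie strictly below $\alpha_{s_j+1}$ before step $j$, and the only earlier jump that can lower it is the $(s_j+1)$-jump executed at step $i$.

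The main obstacle I foresee is not the logic itself but the careful bookkeeping for the intermediate tuples $\bda^k$; \Cref{rmk:legitimate_jump} is the decisive tool here, since it guarantees that each coordinate swings by at most one unit across the entire clique and pinpoints the unique jump responsible for every change. With that in hand, both parts fall out quickly.
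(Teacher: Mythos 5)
Your proof is correct and follows essentially the same strategy as the paper: verify \textup{(\texttt{PO})} on the generating relations using \Cref{rmk:legitimate_jump} to pin down which jump must precede which, then invoke transitivity, and dispose of well-definedness by noting that opposite relations between adjacent positions cannot co-occur. The one small variation is in the well-definedness step, where the paper argues that $p\,\triangleleft\,(p+1)$ and $(p+1)\,\triangleleft\,p$ would impose contradictory jump orderings inside a maximal clique $A\in\calE$, whereas you observe directly that the two triggering conditions $a_{p+1}^1=0$ and $a_{p+1}^1=\alpha_{p+1}$ are incompatible because $\alpha_{p+1}\ge 1$ — a slightly more elementary route that reaches the same conclusion.
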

\begin{proof}
    For each $i\in [n-1]$, by definition, we apply an $s_i$-jump to $\bda^i$ in order to get $\bda^{i+1}$.
    \begin{itemize}
        \item If $a_{s_i}^1=0$ and $s_i>1$, then we must have applied an $(s_i-1)$-jump somewhere before the $s_i$-jump. Meanwhile, we require that $(s_{i}-1) \, \triangleleft \, s_i$ in \Cref{def:partial_order}.

        \item Likewise, if $a_{s_i+1}^1=\alpha_{s_i+1}$ and $s_i+1<n$, then we must have applied an $(s_i+1)$-jump somewhere before the $s_i$-jump.  Meanwhile, we require that $(s_{i}+1)\, \triangleleft \, s_i$ in \Cref{def:partial_order}.
    \end{itemize}
    Note that we won't have $s_i \, \triangleleft \,(s_i+1)$ and $(s_i+1)\, \triangleleft \, s_i$ at the same time: the first requires the $s_i$-jump to be applied before the $(s_i+1)$-jump in $A$, while the second requires the opposite. This shows that the poset is well-defined. It is also clear that the condition (\texttt{PO}) holds. 
\end{proof}

\begin{Lemma}
    \label{rmk:legal_signature}
    Conversely, let $\bds=(s_1,\dots,s_{n-1})$ be a permutation of in $\frakS_{n-1}$. Suppose that the condition \textup{(\texttt{PO})} is satisfied by $\bds$.  Then, $\bds$ is a legitimate signature with respect to $\calE$, namely, there exists some $A\in \calE$ such that $\sgn(A)=\bds$.
\end{Lemma}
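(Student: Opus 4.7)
The plan is to construct a maximal clique $A=(\bda^1,\dots,\bda^n)\in\calE$ with $\sgn(A)=\bds$ by applying, in order, the jumps prescribed by $\bds$. Let $\bda^1=(a_1^1,\ldots,a_n^1)$ denote the common initial tuple of all maximal cliques in $\calE$; since $\calE$ is non-empty, \Cref{cor:Delta_maximal_clique} forces $a_1^1\ge 1$ and $a_n^1\le\alpha_n-1$. Inductively define $\bda^{i+1}$ to be the result of applying an $s_i$-jump to $\bda^i$, for $i=1,\ldots,n-1$. Provided each jump is legitimate, by construction $\Delta(\bda^i,\bda^{i+1})=[s_i,s_i+1)$, so $\sgn(A)=\bds$, and $A$ lies in $\calE$ since its first tuple is still $\bda^1$.

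The crux of the argument, and what I expect to be the main obstacle, lies in verifying that every $s_i$-jump is indeed legitimate, i.e., $a_{s_i}^i>0$ and $a_{s_i+1}^i<\alpha_{s_i+1}$. This is exactly where the hypothesis \textup{(\texttt{PO})} enters. Since the $s_i$-jump has not yet been performed at step $i$, \Cref{rmk:legitimate_jump} gives $a_{s_i}^i\in\{a_{s_i}^1,a_{s_i}^1+1\}$, with the larger value occurring precisely when the $(s_i-1)$-jump has already been performed. If $a_{s_i}^1\ge 1$ the bound is immediate; otherwise $a_{s_i}^1=0$, which forces $s_i>1$ (else $a_1^1=0$, contrary to the above). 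By \Cref{def:partial_order} we then have $(s_i-1)\triangleleft s_i$, and \textup{(\texttt{PO})} guarantees that $s_i-1$ appeared earlier in $\bds$, so the $(s_i-1)$-jump has already been applied, forcing $a_{s_i}^i=a_{s_i}^1+1=1>0$. A symmetric argument, using the other half of \Cref{def:partial_order}, handles the upper bound $a_{s_i+1}^i<\alpha_{s_i+1}$.

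Finally, it remains to verify that the set $\{\bda^1,\ldots,\bda^n\}$ really is a maximal clique of $\calG$. For $1\le i<j\le n$, put $S\coloneqq\{s_i,s_{i+1},\ldots,s_{j-1}\}\subseteq[n-1]$ and decompose $S$ into maximal runs of consecutive integers $[p_1,q_1]\sqcup\cdots\sqcup[p_m,q_m]$. A direct coordinate-wise tally of the applied jumps shows that $a_k^j-a_k^i$ equals $-1$ at $k=p_t$, $+1$ at $k=q_t+1$, and $0$ otherwise. Maximality of the runs yields the interlacing $p_1<q_1+1\le p_2<q_2+1\le\cdots\le p_m<q_m+1$ required by condition \textup{(\texttt{S})}, whence $(\bda^i,\bda^j)$ is sorted by \Cref{lem:3_1}. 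Thus $\{\bda^1,\ldots,\bda^n\}$ is a clique of size $n$, which is automatically maximal by \Cref{prop:max_clique_size}, and has signature $\bds$, as required.
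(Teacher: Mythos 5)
Your proof is correct and takes essentially the same approach as the paper's: construct the clique by applying the jumps in the order prescribed by $\bds$, and use \textup{(\texttt{PO})} together with \Cref{rmk:legitimate_jump} to justify that each jump keeps the tuple in $V_{n,d}^{\bdalpha}$. The paper argues each jump's legitimacy by contradiction and leaves the final clique verification mostly implicit, whereas you argue directly and spell out that verification via the coordinate tally and \Cref{lem:3_1} --- purely stylistic differences.
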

\begin{proof}
    Suppose that the maximal cliques in $\calE$ all start with $\bda^1$ and end with $\bda^n$. It suffices to construct $\bda^2,\dots,\bda^{n-1}\in V_{n,d}^{\bdalpha}$, such that $\Delta(\bda^{i-1},\bda^{i}) =[s_{i-1}, s_{i-1}+1)$ when $i\ge 2$. We do this by induction on $i$.
    The degenerate base case when $i=1$ is trivial.
    Next, assume that $\bda^1,\dots,\bda^{i-1}$ have been constructed and $2\le i\le n-1$.
    \begin{enumerate}[a]
        \item If $a_{s_{i-1}}^{i-1}>0$ and $a_{s_{i-1}+1}^{i-1}< \alpha_{s_{i-1}+1}$, it is clear that we can apply the $s_{i-1}$-jump to $\bda^{i-1}$ to get a tuple $\bda^{i}\in V_{n,d}^{\bdalpha}$. Certainly $\Delta(\bda^{i-1},\bda^i)= [s_{i-1},s_{i-1}+1)$. 
        \item Suppose that $a_{s_{i-1}}^{i-1}= 0$. As $a_{s_{i-1}}^1\ge 0$, we deduce that $a_{s_{i-1}}^{i-1}\ne a_{s_{i-1}}^1+1$. Since $\bds$ is a permutation, we cannot apply the $s_{i-1}$-jump before $\bda^{i-1}$. Thus, it follows from \Cref{rmk:legitimate_jump} that we cannot use the $(s_{i-1}-1)$-jump before $\bda^{i-1}$. 
            \begin{enumerate}[i]
                \item Suppose that $s_{i-1}=1$. Whence, $s_k\ne 1$ for $k< i-1$. Consequently, we deduce that $a_{1}^1=\cdots = a_{1}^{i-1} = 0$. But this is impossible since it is necessary that $a_1^1\ge 1$.

                \item Suppose that $s_{i-1}> 1$. If in addition $a_{s_{i-1}}^1= 0$, we have $(s_{i-1}-1)\, \triangleleft \, s_{i-1}$ from \Cref{def:partial_order}. Then, the condition (\texttt{PO}) implies that the $(s_{i-1}-1)$-jump is applied before the $s_{i-1}$-jump, i.e., the $(s_{i-1}-1)$-jump is applied before $\bda^{i-1}$, a contradiction. If instead $a_{s_{i-1}}^1> 0$, then $a_{s_{i-1}}^{i-1}= a_{s_{i-1}}^1-1$. It follows from \Cref{rmk:legitimate_jump} that the $s_{i-1}$-jump is applied before $\bda^{i-1}$, again a contradiction.
            \end{enumerate}

        \item Suppose that $a_{s_{i-1}+1}^{i-1}= \alpha_{s_{i-1}+1}$. We can argue as in the previous case to see that this is impossible.
            \qedhere
    \end{enumerate}
\end{proof}

The final step is to provide each equivalence class with additional structures.

\begin{Remark}
    \label{rmk:cont_compare}
    Suppose that $i\,\triangleleft\, j$ in the poset of obstructions with respect to  $\calE$. If $i<j$, then we have $i\,\triangleleft\, (i+1) \,\triangleleft\, \cdots \,\triangleleft\,(j-1)\,\triangleleft\, j$. If instead $i>j$, then we have $i\,\triangleleft\, (i-1)\,\triangleleft\, \cdots \,\triangleleft\,(j+1)\,\triangleleft\, j$.
\end{Remark}

\begin{Definition}
    \label{def:L}
    Let $\calE$ be an equivalence class of maximal cliques.
    \begin{enumerate}[i]
        \item In $\calE$, we will take a maximal clique $L=L(\calE)$ as follows. When $a_1^1>1$, we assume that $\kappa_1=0$.  If instead $a_1^1=1$, then let $\kappa_1$ be the largest integer such that $\kappa_1\le n-1$ and $a_2^1=\cdots=a_{\kappa_1}^1=0$.  When $a_2^1>0$, we will simply take $\kappa_1=1$.  Symmetrically, if $a_n^1=\alpha_n-1$, then let $\kappa_2$ be the smallest integer such that $\kappa_2\ge 2$ and $a_j^1=\alpha_j$ for all $\kappa_2\leq j\leq n-1$.  When $a_{n-1}^1<\alpha_{n-1}$, we will simply take $\kappa_2=n$.  It is clear that $\kappa_1<\kappa_2$ if both exist. Furthermore, in \Cref{def:partial_order}, we have the relations $1 \, \triangleleft \, 2 \, \triangleleft \, \cdots \, \triangleleft \, \kappa_1$ and $(n-1)\, \triangleleft\, (n-2) \, \triangleleft \, \cdots \, \triangleleft \, (\kappa_2-1)$.  Note that these relations are the only nontrivial ones that include the integers $1,2,\dots,\kappa_1-1,\kappa_2,\kappa_2+1,\dots,n-1$ in the poset of obstructions with respect to $\calE$. On the other hand, although it is possible to have like $(\kappa_1+1)\,\triangleleft\, \kappa_1$ or $(\kappa_2-2)\,\triangleleft\,(\kappa_2-1)$ in the poset, we won't have any $t\in [n-1]$ such that either $\kappa_1 \,\triangleleft\, t$ or $(\kappa_2-1)\, \triangleleft\, t$, by the choice of $\kappa_1$ and $\kappa_2$, as well as \Cref{rmk:cont_compare}. Thus, we can choose an $L$ in $\calE$ such that
            \[
                \qquad \qquad 
                \sgn(L) \coloneqq
                \begin{cases}
                    (\tau_1, \dots, \tau_{\kappa_2-\kappa_1-2}, \underbrace{n-1, n-2,
                    \dots, \kappa_2-1}, \underbrace{1, 2, \dots, \kappa_1}), & \text{if
                    $\kappa_1<\kappa_2-1$,} \\[1em]
                    (\underbrace{n-1, n-2, \dots, \kappa_2}, \underbrace{1, 2,
                    \dots, \kappa_1}), & \text{if $\kappa_1=\kappa_2-1$}
                \end{cases}
            \]
            for suitable $\tau_i$'s that are compatible with the poset of obstructions. In the case where $\kappa_2$ is not defined, namely when $a_n^1<\alpha_n-1$, we can superficially assume that $\kappa_2=n+1$ and treat it as a degenerate case of the first one.   
            A priori, the $\sgn(L)$ just constructed is only a permutation in $\frakS_{n-1}$. It is indeed a legal signature since such a maximal clique $L$ exists by \Cref{rmk:legal_signature}.

        \item We will call $(\calE,L)$ a \emph{marked equivalence class}. This is an equivalence class with a chosen representative. Since we will take this particular $L$ for this $\calE$ once and for all in the rest of this paper, we will simply write $(\calE,L)$ as $\calE$.

        \item Suppose that $\bdtau\coloneqq \sgn(L)= (\tau_1, \tau_2, \ldots, \tau_{n-1})$.
            Now, take any $A\in \calE$ and assume that $\sgn(A)=(k_1,\dots,k_{n-1})$. Let $s_i$ be the index such that $\tau_{s_i}=k_i$. We will say that the \emph{relative signature of $A$ with respect to $\bdtau$} (or equivalently, with respect to $L$) is $\sgn_{\bdtau}(A)\coloneqq (s_1,\dots,s_{n-1})$. Obviously, $\sgn_\bdtau(L)=(1,2,\dots,n-1)$.
    \end{enumerate}
\end{Definition}

Since we have all the necessary definitions and notations in place, we are
ready to state the order that gives rise to the linear quotient property.
Recall that $\MC(\calG)$ is the set of maximal cliques of $\calG$.

\begin{Setting}
    [\textbf{Rules of order}]
    \label{Order}
    Let $\prec$ be a total order on $\MC(\calG)$ satisfying the following conditions.
    \begin{enumerate}[a]
        \item If $\rank(B)<\rank(A)$, then $B\prec A$.
        \item Suppose that $\rank(B)=\rank(A)$, $\calE_A\ne \calE_B$, and $B\prec A$. Then, for any $A'\in \calE_A$ and $B'\in \calE_B$, we have $B'\prec A'$. Therefore, $\prec$ also induces a total order on the set of equivalence classes.
        \item \label{lem:lq_cross_groups}
            Let $\calE$ be an equivalence class and $L$ be the special maximal clique we chose in \Cref{def:L}.
            Suppose that $\bdtau\coloneqq \sgn(L)=(\tau_1,\dots,\tau_{n-1})$. The restriction of $\prec$ to $\calE$ is given  by the lexicographical order with respect to $\tau_1<\tau_2<\cdots <\tau_{n-1}$. In other words, if $A,B \in \calE$, then $B\prec A$ if and only if the first nonzero entry of $\sgn_{\bdtau}(B)-\sgn_{\bdtau}(A)$ is negative.
    \end{enumerate}
    Such a total order $\prec$ exists, but in general, it is not unique.
    In the rest of this paper, we will simply fix one that works.
\end{Setting}

\begin{Example}
    \label{exam:one_max_clique}
    We present here an example showing how the maximal cliques in a marked equivalence class $(\calE,L)$ are ordered according to \Cref{Order}. Consider $n=5$, $d=8$, and $\bdalpha=(2,2,2,3,3)$. Let $\calE$ be the equivalence containing
    \[
        A=((1, 1, 1, 3, 2), (0, 2, 1, 3, 2), (0, 1, 2, 3, 2), (0, 1, 2, 2, 3),
        (0, 1, 1, 3, 3)).
    \]
    Write $A=(\bda^1,\dots,\bda^5)$. Note that $rA$ does not exist. This is because if $\Delta(\bda^2,\bda^{5+1})=[1,5)$, then $\bda^{5+1}= (-1,2,1,3,3)$, which does not belong to $V_{n,d}^{\bdalpha}$. Using the notation in \Cref{def:L}, we have $\kappa_1=1$ and $\kappa_2=4$. Consequently, we choose a maximal clique $L$ with $\bdtau\coloneqq \sgn(L)=(2, 4, 3, 1)$. Since $\bda^1$ is given, we have
    \[
        L=((1, 1, 1, 3, 2), (1, 0, 2, 3, 2), (1, 0, 2,
        2, 3), (1, 0, 1, 3, 3), (0, 1, 1, 3, 3))
    \]
    for our equivalence class $\calE$.

    The poset of obstructions, defined in \Cref{def:partial_order}, contains only the nontrivial relation $4\, \triangleleft\, 3$.  As a result, we have exactly $4!/2=12$ maximal cliques in the equivalence class $\calE$. In 
    \Cref{tabb}
    we list all their signatures and their relative signatures with respect to $\bdtau$.
    \begin{table}[htbp]
        \caption{Ordered maximal cliques in an equivalence class}
        \label{tabb}
        \begin{minipage}[t]{0.25\linewidth}
            \renewcommand{\arraystretch}{1.2}
            \centering
            {\small
                \begin{tabular}{ccc}
                    \hline
                    & $\sgn(A)$ & $\sgn_{\bdtau}(A)$
                    \tabularnewline
                    \hline
                    $1$ & $(2,4,3,1)$ & $(1,2,3,4)$
                    \tabularnewline
                    $2$ & $(2,4,1,3)$ & $(1,2,4,3)$
                    \tabularnewline
                    $3$ & $(2,1,4,3)$ & $(1,4,2,3)$
                    \tabularnewline
                    $4$ & $(4,2,3,1)$ & $(2,1,3,4)$
                    \tabularnewline
                    $5$ & $(4,2,1,3)$ & $(2,1,4,3)$
                    \tabularnewline
                    $6$ & $(4,3,2,1)$ & $(2,3,1,4)$
                    \tabularnewline
                    \hline
            \end{tabular}}
        \end{minipage}
        \quad
        \begin{minipage}[t]{0.25\linewidth}
            \renewcommand{\arraystretch}{1.2}
            \centering
            {\small
                \begin{tabular}{ccc}
                    \hline
                    & $\sgn(A)$ & $\sgn_{\bdtau}(A)$
                    \tabularnewline
                    \hline
                    $7$ & $(4,3,1,2)$ & $(2,3,4,1)$
                    \tabularnewline
                    $8$ & $(4,1,2,3)$ & $(2,4,1,3)$
                    \tabularnewline
                    $9$ & $(4,1,3,2)$ & $(2,4,3,1)$
                    \tabularnewline
                    $10$ & $(1,2,4,3)$ & $(4,1,2,3)$
                    \tabularnewline
                    $11$ & $(1,4,2,3)$ & $(4,2,1,3)$
                    \tabularnewline
                    $12$ & $(1,4,3,2)$ & $(4,2,3,1)$
                    \tabularnewline
                    \hline
            \end{tabular}}
        \end{minipage}
    \end{table}
    Due to lack of space, we will not explicitly list every maximal clique in this equivalence class.  We only mention one here for illustration.  Since every such maximal clique starts with $\bda^1=(1, 1, 1, 3, 2)$, the maximal clique $B$ satisfying
    $\sgn(B)=(1, 4, 2, 3)$ is
    \[
        B=((1, 1, 1, 3, 2), (0, 2, 1, 3, 2), (0, 2, 1, 2, 3), (0, 1, 2, 2, 3),
        (0, 1, 1, 3, 3)).
    \]
    We can then check that $\sgn_{\bdtau}(B)=(4,2,1,3)$.
\end{Example}

The rest of this section is devoted to showing the linear quotient property with respect to the order $\prec$ introduced in \Cref{Order}. Let $A$ be a maximal clique. Given \Cref{translate_lq_sets}, to show that the quotient ideal $\braket{\bdT_{B^\complement} : B\prec A}: \bdT_{A^\complement}$ is linear, 
we show that for every maximal clique $B\prec A$, we can find $C\prec A$ such that $\#\diff(A,C)=1$ and $\diff(A,C)\subseteq \diff(A,B)$.
The following technical lemma constructs the candidate maximal cliques that we will need in the later proof.

\begin{Lemma}
    \label{lem:exchange}
    Suppose that $A=(\bda^1,\dots,\bda^n)$ and $B=(\bdb^1,\dots,\bdb^n)$ are two different maximal cliques in the same equivalence class $\calE$ such that $\sgn(A)=(s_1,\dots,s_{n-1})$ and $\sgn(B)= (t_1, \dots,t_{n-1})$. Suppose that $s_i=t_i$ for $i\in \{1,2, \dots, k\} \cup \{\ell,\ell+1,\dots,n-1\}$ and $s_{k+1}\ne t_{k+1}$; we allow $\ell=n$ so that the part $\{\ell, \ell+1,\dots, n-1\}$ disappears. Then, we can find maximal cliques $C,D\in \calE$ such that
    \[
        \sgn(C)= (s_1, \dots, s_k, s_{k+1}, t_{k+1}, p_{k+3},\dots, p_{\ell-1},
        s_\ell, \dots, s_{n-1})
    \]
    and
    \[
        \sgn(D) =(s_1, \dots, s_k, t_{k+1}, s_{k+1},
        q_{k+3},\dots, q_{\ell-1}, s_\ell, \dots,s_{n-1})
    \]
    for suitable $p_i$'s and $q_i$'s.
\end{Lemma}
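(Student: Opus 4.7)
My plan is to write down the desired signatures of $C$ and $D$ as concrete permutations of $[n-1]$, check that they satisfy condition (\texttt{PO}) from \Cref{lem:po}, and then invoke \Cref{rmk:legal_signature} to realize them as actual maximal cliques in $\calE$. The workhorse will be the poset of obstructions $\triangleleft$ attached to $\calE$, combined with the fact that both $\sgn(A)$ and $\sgn(B)$ already respect $\triangleleft$.

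The common prefix $(s_1,\dots,s_k)$ and common suffix $(s_\ell,\dots,s_{n-1})$ of $\sgn(A)$ and $\sgn(B)$ force the middle set
\[
    M\coloneqq \{s_{k+1},\dots,s_{\ell-1}\}=\{t_{k+1},\dots,t_{\ell-1}\}.
\]
The key step is to show that $s_{k+1}$ and $t_{k+1}$ are both minimal in the restriction of $\triangleleft$ to $M$. If some $m\in M$ satisfied $m\triangleleft s_{k+1}$, then (\texttt{PO}) applied to $\sgn(A)$ would place $m$ at some position in $\{1,\dots,k\}$, putting $m\in \{s_1,\dots,s_k\}$; but that set is disjoint from $M$, a contradiction. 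Running the same argument with $\sgn(B)$ in place of $\sgn(A)$ shows $t_{k+1}$ is also $\triangleleft$-minimal in $M$, so in particular $s_{k+1}$ and $t_{k+1}$ are incomparable under $\triangleleft$.

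Next, choose any linear extension $(p_{k+3},\dots,p_{\ell-1})$ of the restriction of $\triangleleft$ to $M\setminus\{s_{k+1},t_{k+1}\}$, and define
\[
    \sgn(C)\coloneqq (s_1,\dots,s_k,\,s_{k+1},\,t_{k+1},\,p_{k+3},\dots,p_{\ell-1},\,s_\ell,\dots,s_{n-1}).
\]
To verify (\texttt{PO}) for this permutation I run a case analysis on the locations of the two entries of an arbitrary $\triangleleft$-comparable pair: pairs fully inside the prefix, fully inside the suffix, or with one leg in the prefix and one in the suffix inherit the correct relative order straight from $\sgn(A)$; pairs crossing between the prefix (resp.\ suffix) and $M$ are automatically correct, since middle elements occupy positions strictly between those of the prefix and the suffix in both $\sgn(A)$ and $\sgn(C)$; and pairs living entirely inside $M$ are handled by the minimality of $s_{k+1}$ and $t_{k+1}$ at the two leading middle slots, together with the linear-extension property of the remaining $p_j$'s. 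The clique $D$ is constructed symmetrically by swapping $s_{k+1}$ and $t_{k+1}$ into the reversed order in the first two middle slots and choosing an analogous linear extension $(q_{k+3},\dots,q_{\ell-1})$; legality is then secured by the $\triangleleft$-minimality of $t_{k+1}$.

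The main obstacle is the minimality claim for $s_{k+1}$ and $t_{k+1}$ in $(M,\triangleleft)$, because that is the single place where the hypotheses on $A$ and $B$ really do work. Once it is established, the construction of the missing middle entries is a standard linear-extension argument, the case analysis for (\texttt{PO}) is routine, and \Cref{rmk:legal_signature} produces the required maximal cliques $C,D\in \calE$.
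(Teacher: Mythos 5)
Your proof is correct, and it takes a genuinely different route from the paper's. The paper argues at the level of tuples in $V_{n,d}^{\bdalpha}$: it writes down $\bda^{k+2}$ and $\bdb^{k+2}$ explicitly, constructs a candidate tuple $\bdc^{k+3}$, verifies membership in $V_{n,d}^{\bdalpha}$ by hand, and then completes to a maximal clique via \Cref{lem:3_4} and \Cref{cor:Delta_maximal_clique}, splitting into the cases $s_{k+1}+1=t_{k+1}$ and $s_{k+1}+1<t_{k+1}$. You instead work entirely inside the poset of obstructions: you isolate the middle block $M$, prove that both $s_{k+1}$ and $t_{k+1}$ are $\triangleleft$-minimal in $M$ (and hence incomparable), and realize the target signatures via a linear extension of $\triangleleft$ restricted to $M\setminus\{s_{k+1},t_{k+1}\}$, appealing to \Cref{lem:po} and \Cref{rmk:legal_signature} for the round trip between \textup{(\texttt{PO})} and legitimacy. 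This avoids the paper's tuple arithmetic and the case split on $|s_{k+1}-t_{k+1}|$, and it makes the reason the exchange is permitted more transparent; the price is that it relies on the full strength of \Cref{rmk:legal_signature}, whereas the paper's argument is self-contained at the tuple level. Two small points worth stating explicitly in a final write-up: the deduction that $m\in\{s_1,\dots,s_k\}$ should be phrased as ``$m=s_i$ with $i<k+1$ by \textup{(\texttt{PO})}'' rather than ``placed at a position''; and the cross cases in the \textup{(\texttt{PO})} check (prefix vs.\ $M$, suffix vs.\ $M$) are handled because any $\triangleleft$-relation pointing the wrong way would already violate \textup{(\texttt{PO})} for $\sgn(A)$, hence is vacuous — your phrase ``automatically correct'' hides this vacuity argument.
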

\begin{proof}
    Suppose that $\bda^{k+1}=(a_1^{k+1},\dots,a_n^{k+1})$. Then
    \[
        \bda^{k+2}=(a_1^{k+1},\dots,a_{s_{k+1}-1}^{k+1},
        a_{s_{k+1}}^{k+1}-1,a_{s_{k+1}+1}^{k+1}+1,a_{s_{k+1}+2}^{k+1},\dots,a_n^{k+1}).
    \]
    Since $\bda^1=\bdb^1$ and $s_i=t_i$ for $i\in [k]$, we have
    $\bdb^{k+1}=\bda^{k+1}$ and consequently
    \[
        \bdb^{k+2}=(a_1^{k+1},\dots,a_{t_{k+1}-1}^{k+1},
        a_{t_{k+1}}^{k+1}-1,a_{t_{k+1}+1}^{k+1}+1,
        a_{t_{k+1}+2}^{k+1},\dots,a_n^{k+1}).
    \]
    Without loss of generality, we can assume that $s_{k+1}<t_{k+1}$.
    \begin{enumerate}[a]
        \item Suppose that $s_{k+1}+1=t_{k+1}$. Let $\bdc^{k+3}$ be the tuple such that $\Delta(\bda^{k+2},\bdc^{k+3})=[t_{k+1},t_{k+1}+1)$, namely,
            \[
                \bdc^{k+3}=(a_1^{k+1},\dots,a_{s_{k+1}-1}^{k+1},
                a_{s_{k+1}}^{k+1}-1, a_{s_{k+1}+1}^{k+1},
                a_{s_{k+1}+2}^{k+1}+1,a_{s_{k+1}+3}^{k+1}, \dots,a_n^{k+1}).
            \]
            Due to the existence of $\bda^{k+2}$ and $\bdb^{k+2}$, it can be verified that $\bdc^{k+3}\in V_{n,d}^{\bdalpha}$. Furthermore,
            $\Delta(\bdc^{k+3}, \bda^\ell)=\Delta(\bda^{k+1},\bda^{\ell})
            \setminus [s_{k+1}, s_{k+1}+2)$. Whence, $\bda^{k+2}>_{\lex}
            \bdc^{k+3} >_{\lex}\bda^\ell$ is a clique in $\calG$. It is then
            not difficult to see that $\bda^1>_{\lex} \dots >_{\lex}\bda^{k+2}
            >_{\lex}\bdc^{k+3}>_{\lex} \bda^\ell>_{\lex}\cdots>_{\lex}\bda^n$
            is a clique, by \Cref{lem:3_4}. As $\Delta(\bda^1, \dots, \bda^n)
            =[1,n)$, we can complete it to a maximal clique, which must have 
            the form $\bda^1>_{\lex}\dots>_{\lex}\bda^{k+2}>_{\lex} \bdc^{k+3}
            >_{\lex}\bdc^{k+4} >_{\lex}>\cdots> \bdc^{\ell-1} >_{\lex}
            \bda^\ell >_{\lex} \cdots>_{\lex}\bda^n$ by
            \Cref{cor:Delta_maximal_clique}. We will take this as our expected
            maximal clique $C$. Now, $\bda^1>_{\lex}\dots>_{\lex} \bda^{k+1}
            >_{\lex}\bdb^{k+2}>_{\lex} \bdc^{k+3}>_{\lex}\bdc^{k+4} >_{\lex}
            \cdots >\bdc^{\ell-1}>_{\lex}\bda^\ell >_{\lex}\cdots >_{\lex}
            \bda^n$ is also a maximal clique, which will be our expected $D$.
            One can check directly that they satisfy the requirements.
        \item Suppose that $s_{k+1}+1<t_{k+1}$. 
            We can make a similar argument.
            \qedhere
    \end{enumerate}
\end{proof}

Note that each equivalence class $\calE$ is ordered with respect to the total order $\prec$ in \Cref{Order}. We will divide it into subclasses, for the convenience of our later argument.

\begin{Notation}
    Let $(\calE,L)$ be a marked equivalence class with $\sgn(L)=\bdtau$.  Then, the \emph{subclasses $\calE_{j_1,\ldots,j_k}\subset \calE$ of level $k$} are defined such that the following conditions are satisfied.
    \begin{enumerate}[a]
        \item If $k<n-1$, then $\calE_{j_1,\dots,j_k}=\bigsqcup_{t=1}^{m} \calE_{j_1,\dots,j_k,t}$ is the disjoint union of nonempty subclasses of level $k+1$, for some suitable positive integer $m$.
        \item For any $A,B\in \calE_{j_1,\ldots,j_k}$ with
            $\sgn_{\bdtau}(A)=(p_1,\ldots,p_{n-1})$ and
            $\sgn_{\bdtau}(B)=(q_1,\ldots, q_{n-1})$, one has $p_i=q_i$ for
            all $i=1,\ldots,k$.
        \item \label{notation:subclasses_c}
            It follows from the previous two conditions that every subclass of level $n-1$ contains exactly one maximal clique. Now, if $A\in \calE_{j_1,\ldots,j_{n-1}}$ and $B\in \calE_{\ell_1,\ldots, \ell_{n-1}}$ such that $A\prec B$ (or equivalently, the first nonzero entry of $\sgn_{\bdtau}(A)-\sgn_{\bdtau}(B)$ is negative), then the first nonzero entry of $(j_1, \ldots, j_{n-1})-(\ell_1,\ldots,\ell_{n-1})$ must be negative.
    \end{enumerate}
\end{Notation}

    \begin{Example}
        Let us return to the equivalence class $\calE$ containing the maximal clique
        \[
            A=((1, 1, 1, 3, 2), (0, 2, 1, 3, 2), (0, 1, 2, 3, 2), (0, 1, 2, 2, 3),
            (0, 1, 1, 3, 3))
        \]
        in \Cref{exam:one_max_clique}. The equivalence class $\calE$ contains $12$ different maximal cliques, which we list in \Cref{tabb2} in order. The subclasses of level $4$ containing them, together with their relative signatures, are also provided. In this case, we have $\calE_{1,1}=\{A_1,A_2\}$ and $\calE_{3}=\{A_{10},A_{11},A_{12}\}$. They are subclasses of level $2$ and $1$ respectively.
        \begin{table}[htbp]
            \caption{Subclasses of an equivalence class}
            \label{tabb2}
            \begin{tabular}{cccc}
                \hline
                $i$ & $A_i$ & $\mathcal{E}_{*,*,*,*}$ & $\sgn_{\bdtau}(A_i)$ \\
                \hline
                1 & \((1,1,1,3,2), (1,0,2,3,2), (1,0,2,2,3), (1,0,1,3,3), (0,1,1,3,3)\) & $\mathcal{E}_{1,1,1,1}$ & $(1,2,3,4)$ \\

                2 & \((1,1,1,3,2), (1,0,2,3,2), (1,0,2,2,3), (0,1,2,2,3), (0,1,1,3,3)\) & $\mathcal{E}_{1,1,2,1}$ & $(1,2,4,3)$ \\

                3 & \((1,1,1,3,2), (1,0,2,3,2), (0,1,2,3,2), (0,1,2,2,3), (0,1,1,3,3)\) & $\mathcal{E}_{1,2,1,1}$ & $(1,4,2,3)$ \\

                4 & \((1,1,1,3,2), (1,1,1,2,3), (1,0,2,2,3), (1,0,1,3,3), (0,1,1,3,3)\) & $\mathcal{E}_{2,1,1,1}$ & $(2,1,3,4)$ \\

                5 & \((1,1,1,3,2), (1,1,1,2,3), (1,0,2,2,3), (0,1,2,2,3), (0,1,1,3,3)\) & $\mathcal{E}_{2,1,2,1}$ & $(2,1,4,3)$ \\

                6 & \((1,1,1,3,2), (1,1,1,2,3), (1,1,0,3,3), (1,0,1,3,3), (0,1,1,3,3)\) & $\mathcal{E}_{2,2,1,1}$ & $(2,3,1,4)$ \\

                7 & \((1,1,1,3,2), (1,1,1,2,3), (1,1,0,3,3), (0,2,0,3,3), (0,1,1,3,3)\) & $\mathcal{E}_{2,2,2,1}$ & $(2,3,4,1)$ \\

                8 & \((1,1,1,3,2), (1,1,1,2,3), (0,2,1,2,3), (0,1,2,2,3), (0,1,1,3,3)\) & $\mathcal{E}_{2,3,1,1}$ & $(2,4,1,3)$ \\

                9 & \((1,1,1,3,2), (1,1,1,2,3), (0,2,1,2,3), (0,2,0,3,3), (0,1,1,3,3)\) & $\mathcal{E}_{2,3,2,1}$ & $(2,4,3,1)$ \\

                10 & \((1,1,1,3,2), (0,2,1,3,2), (0,1,2,3,2), (0,1,2,2,3), (0,1,1,3,3)\) & $\mathcal{E}_{3,1,1,1}$ & $(4,1,2,3)$ \\

                11 & \((1,1,1,3,2), (0,2,1,3,2), (0,2,1,2,3), (0,1,2,2,3), (0,1,1,3,3)\) & $\mathcal{E}_{3,2,1,1}$ & $(4,2,1,3)$ \\

                12 & \((1,1,1,3,2), (0,2,1,3,2), (0,2,1,2,3), (0,2,0,3,3), (0,1,1,3,3)\) & $\mathcal{E}_{3,2,2,1}$ & $(4,2,3,1)$ \\
                \hline
            \end{tabular}
        \end{table}
    \end{Example}

\begin{Lemma}
    \label{rmk:keep_order}
    Let $(\calE,L)$ be a marked equivalence class with $\sgn(L)=\bdtau$ and consider a subclass $\calE_{j_1,\dots,j_k}$ of level $k$ with $j_k>1$.  Suppose that $\{s_1,\dots,s_{k-1},s_{k,1},\dots,s_{k,j_k}\}$ is a subset of $[n-1]$ and the signatures of the maximal cliques in $\calE_{j_1, \dots, j_{k-1},\ell}$ have the form $(s_1, \dots, s_{k-1}, s_{k,\ell}, p_{k+1},\dots,p_{n-1})$ for each $\ell\le j_k$, where $p_i\in [n-1]\setminus \{s_1, \dots, s_{k-1}, s_{k,\ell}\}$ for $i=k+1,\dots, n-1$.  Besides, assume that $t$ belongs to $[n-1]\setminus\{s_1, \dots, s_{k-1}, s_{k,1}, \dots, s_{k,j_k-1}\}$ such that $t$ precedes $s_{k,j_k}$ with respect to $\bdtau$. Then, there is no maximal clique $A$ in $\calE$ such that $\sgn(A)$ has the form $(s_1, \dots, s_{k-1}, s_{k,j_k}, t, q_{k+2}, \dots, q_{n-1})$, where $q_i\in [n-1]\setminus \{s_1, \dots, s_{k-1}, s_{k,j_k}, t\}$ for $i=k+2,\dots,n-1$.
\end{Lemma}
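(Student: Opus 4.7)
The plan is to argue by contradiction. Suppose a maximal clique $A=(\bda^1,\dots,\bda^n)\in\calE$ with $\sgn(A)=(s_1,\dots,s_{k-1},s_{k,j_k},t,q_{k+2},\dots,q_{n-1})$ exists. I would produce a second maximal clique $A^*\in\calE$ sitting in some level-$k$ subclass $\calE_{j_1,\dots,j_{k-1},\ell^*}$ with $\ell^*<j_k$ whose $k$-th $\sgn$-entry is $t$. By the assumed description of the smaller subclasses, this forces $t=s_{k,\ell^*}\in\{s_{k,1},\dots,s_{k,j_k-1}\}$, directly contradicting the hypothesis on $t$.

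The construction of $A^*$ would proceed via the realizability criterion of \Cref{rmk:legal_signature}. Consider the permutation $\bds^*\coloneqq(s_1,\dots,s_{k-1},t,s_{k,j_k},q_{k+2},\dots,q_{n-1})$ obtained from $\sgn(A)$ by transposing the entries in positions $k$ and $k+1$. Since $\sgn(A)$ already satisfies condition \textup{(\texttt{PO})} and $\bds^*$ differs from it only by this single transposition, the only potential new violation of \textup{(\texttt{PO})} in $\bds^*$ is a relation $s_{k,j_k}\triangleleft t$ in the poset of obstructions. The reverse relation $t\triangleleft s_{k,j_k}$ is already excluded, since $t$ appears after $s_{k,j_k}$ in the legal signature $\sgn(A)$. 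To rule out $s_{k,j_k}\triangleleft t$, I would apply \Cref{lem:po} to the marked clique $L$ itself: because $\sgn(L)=\bdtau$ satisfies \textup{(\texttt{PO})}, any such relation would force $s_{k,j_k}$ to precede $t$ in $\bdtau$, contradicting the standing hypothesis that $t$ precedes $s_{k,j_k}$ in $\bdtau$. Hence $\bds^*$ satisfies \textup{(\texttt{PO})}, and \Cref{rmk:legal_signature} delivers the desired $A^*\in\calE$ with $\sgn(A^*)=\bds^*$.

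Locating $A^*$ within the subclass decomposition is then immediate. Its first $k-1$ relative-signature entries coincide with those of $A$, so $A^*\in\calE_{j_1,\dots,j_{k-1}}$; and its $k$-th relative-signature entry is the index of $t$ in $\bdtau$, which is strictly smaller than the index of $s_{k,j_k}$ in $\bdtau$ by the hypothesis. Thus $A^*$ lies in some subclass $\calE_{j_1,\dots,j_{k-1},\ell^*}$ with $\ell^*<j_k$, and the contradiction above closes the argument. The main technical point to be careful with is the observation that swapping two adjacent entries of a signature can only break \textup{(\texttt{PO})} via exactly one specific pair of indices, so the only relation in $\triangleleft$ needing control is $s_{k,j_k}\triangleleft t$; the rest is a clean translation between the $\bdtau$-ordering and the poset-of-obstructions structure via the \textup{(\texttt{PO})} condition on $L$.
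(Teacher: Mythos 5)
Your proposal is correct and follows the paper's proof essentially verbatim: both argue by contradiction, transpose the $k$-th and $(k+1)$-th entries of $\sgn(A)$, invoke the absence of the relation $s_{k,j_k}\,\triangleleft\, t$ (deduced from condition \textup{(\texttt{PO})} applied to $\bdtau=\sgn(L)$ together with the hypothesis that $t$ precedes $s_{k,j_k}$ in $\bdtau$), realize the swapped permutation as a legal signature via \Cref{rmk:legal_signature}, and then locate the resulting clique in a subclass $\calE_{j_1,\dots,j_{k-1},\ell}$ with $\ell<j_k$ to contradict the choice of $t$. Your additional care in explaining why the adjacent transposition can break \textup{(\texttt{PO})} only through the single relation $s_{k,j_k}\,\triangleleft\, t$ makes a step explicit that the paper leaves implicit, but the argument is the same.
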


\begin{proof}
    Suppose for contradiction that there is a maximal clique $A$ in $\calE$ such that $\sgn(A)$ has the form $(s_1, \dots, s_{k-1}, s_{k,j_k}, t, q_{k+2}, \dots, q_{n-1})$ for suitable $q_i$'s. Since $\bdtau$ is a legitimate signature and $t$ precedes $s_{k,j_k}$ with respect to $\bdtau$, we don't have $s_{k,j_k} \,\triangleleft\, t$ with respect to the partial order in \Cref{def:partial_order}. Consequently, $(s_1, \dots, s_{k-1}, t, s_{k,j_k}, p_{k+2}, \dots, p_{n-1})$ is a legitimate signature within $\calE$ by \Cref{rmk:legal_signature}. Let $\calE_{j_1, \dots, j_{k-1},\ell}$ be the subclass, in which the signatures of the maximal cliques have the form $(s_1, \dots, s_{k-1}, t, r_{k+1},\dots,r_{n-1})$ for suitable $r_i$'s. Since $t$ precedes $s_{k,j_k}$ with respect to $\bdtau$, we have $\ell<j_k$ by the condition \ref{notation:subclasses_c} in our construction of subclasses. But this contradicts the choice of $t$. 
\end{proof}

The following proposition guarantees the linear quotients within an equivalence class.

\begin{Proposition}
    \label{lem:lq_within_group}
    Suppose that $A$ and $B$ are two maximal cliques in an equivalence class
    $\calE$ such that $B\prec A$. Then, there exists a maximal clique $D\in
    \calE$ such that $D\prec A$, and the set difference $A\setminus D$ is a
    singleton set with $A\setminus D\subseteq A\setminus B$.
\end{Proposition}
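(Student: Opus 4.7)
My plan is to reformulate using the relative signatures from \Cref{def:L} and construct $D$ by a single adjacent swap in $\sgn(A)$. Write $\sgn(A)=(s_1,\ldots,s_{n-1})$ and $\sgn(B)=(t_1,\ldots,t_{n-1})$, and set $\bdtau=\sgn(L)$, $\sigma=\sgn_\bdtau(A)$, $\pi=\sgn_\bdtau(B)$; by \Cref{Order} the hypothesis $B\prec A$ becomes $\pi<_\lex\sigma$. Let $k+1$ be the smallest index with $\sigma_{k+1}\neq\pi_{k+1}$, so $\pi_{k+1}<\sigma_{k+1}$ and $s_i=t_i$ for $i\le k$. Iterating \Cref{lem:sss_separate} gives $\Delta(\bda^1,\bda^i)=\bigsqcup_{j<i}[s_j,s_j+1)$, which depends only on the set $\{s_1,\ldots,s_{i-1}\}$; combined with $\bda^1=\bdb^1$ this yields
\[
    \diff(A,B)=\{i\in[n]:\{s_1,\ldots,s_{i-1}\}\neq\{t_1,\ldots,t_{i-1}\}\}.
\]

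The core step is to choose $j$ to be the smallest index with $j\ge k+1$ and $\sigma_j>\sigma_{j+1}$, and then take $D$ to be the maximal clique whose signature is the tuple $(s_1,\ldots,s_{j-1},s_{j+1},s_j,s_{j+2},\ldots,s_{n-1})$ obtained from $\sgn(A)$ by interchanging $s_j$ and $s_{j+1}$. Such a $j$ exists because $\pi_{k+1}\notin\{\sigma_1,\ldots,\sigma_{k+1}\}$ forces $\pi_{k+1}=\sigma_m$ for some $m>k+1$, and $\sigma_m<\sigma_{k+1}$ then produces a descent in $\sigma_{k+1},\ldots,\sigma_m$. To see that the swapped signature is realized by some $D\in\calE$ through \Cref{rmk:legal_signature}, I verify condition $(\texttt{PO})$ of \Cref{lem:po}: the only new constraint created by the swap is $s_j\not\triangleleft s_{j+1}$; if that failed, then applying \Cref{lem:po} to the legal signature $\bdtau$ itself would give $\bdtau^{-1}(s_j)<\bdtau^{-1}(s_{j+1})$, i.e.\ $\sigma_j<\sigma_{j+1}$, contradicting the descent.

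It remains to verify the three requirements on $D$. First, $\diff(A,D)=\{j+1\}$ by the equivalence in \Cref{one_different}. Second, $D\prec A$ by \Cref{Order}, since $\sgn_\bdtau(D)$ agrees with $\sigma$ through position $j-1$ and is strictly smaller at position $j$. Third, $\{j+1\}\subseteq\diff(A,B)$ amounts to $\{s_1,\ldots,s_j\}\neq\{t_1,\ldots,t_j\}$, equivalently $\{\sigma_1,\ldots,\sigma_j\}\neq\{\pi_1,\ldots,\pi_j\}$. If these sets coincided, then $\pi_{k+1}$ would lie in $\{\sigma_{k+1},\ldots,\sigma_j\}$; but by the minimality of $j$ the chain $\sigma_{k+1}<\sigma_{k+2}<\cdots<\sigma_j$ is strictly increasing, so every element of that set exceeds $\pi_{k+1}$, a contradiction.

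The main obstacle is pinning down the correct swap position. A direct swap at position $k+1$ may not be a descent, and even when it is, one must check compatibility with the obstruction poset. The insight that unlocks the argument is that every descent of $\sigma$ automatically yields a legal swap, precisely because $\bdtau$ is itself a legitimate signature of $L$; the minimality of the chosen descent $j\ge k+1$ then forces the swapped position $j+1$ to lie in $\diff(A,B)$ via the monotonicity above.
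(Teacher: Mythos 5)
Your proof is correct, and it takes a genuinely different and cleaner route than the paper's. The paper's argument is inductive: it introduces the auxiliary set $\calD=\{F\in\calE:\diff(A,F)\subseteq\diff(A,B)\setminus\{k+1\}\}$, takes $A'$ to be the $\prec$-minimal element of $\calD$, constructs a candidate $C$ by swapping positions $k$ and $k+1$ of $\sgn(A')$, and verifies legitimacy and $C\prec A'$ by a case analysis that requires the technical lemmas \Cref{rmk:keep_order} and \Cref{lem:exchange}; finally it either takes $D=C$ or replaces $B$ by $A'$ and recurses. Your construction is a single, explicit swap of $\sgn(A)$ at the first descent position $j\ge k+1$ of $\sgma=\sgn_\bdtau(A)$, with three clean verifications: legitimacy of $\sgn(D)$ follows from condition~(\texttt{PO}) for $\bdtau$ itself (the descent $\sigma_j>\sigma_{j+1}$ directly rules out $s_j\triangleleft s_{j+1}$, which is the only relation whose order is reversed by the swap); $D\prec A$ is immediate from comparing $\sgn_\bdtau$ at position $j$; and $\{\bda^{j+1}\}\subseteq A\setminus B$ follows from the set characterization $\diff(A,B)=\{i:\{s_1,\dots,s_{i-1}\}\ne\{t_1,\dots,t_{i-1}\}\}$ (an easy consequence of iterating \Cref{lem:sss_separate}) together with the strictly increasing run $\sigma_{k+1}<\cdots<\sigma_j$, all of which lie above $\pi_{k+1}$. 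The payoff of your approach is that it bypasses \Cref{lem:exchange} and \Cref{rmk:keep_order} entirely, replaces the induction by a direct construction, and yields a closed-form description of the swap position; the paper's route, on the other hand, produces the auxiliary clique $A'$ which might be convenient for tracing the full linear-quotient structure but is not needed for the statement itself.
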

\begin{proof}
    Suppose that in the marked equivalence class $(\calE,L)$, one has $\sgn(L)=\bdtau$. In addition, assume that $A=(\bda^1,\dots,\bda^n)$ and $B=(\bdb^1,\dots,\bdb^n)\in\calE$. Recall that we previously defined $\diff(A,B)\coloneqq \Set{i:\bda^i\ne \bdb^{i}}$.

    We may further assume that $A\in\calE_{j_1,\dots,j_{k-1},j_k}$ and $B\in\calE_{j_1,\dots,j_{k-1},j_k'}$ with $j_k'<j_k$. Then $k+1=\min \diff(A,B)$ and it is clear that $k\le n-2$.
    Let us consider a set of maximal cliques
    \[
        \calD\coloneqq \Set{F\in \calE: \diff(A,F)\subseteq
        \diff(A,B)\setminus\{k+1\}}.
    \]
    Since $A\in \calD$, the set $\calD$ is not empty. Let $A'$ be the first maximal clique in $\calD$ with respect to $\prec$ and suppose that $\sgn(A')=(s_1, \dots, s_{n-1})$. Obviously, $A'\in \calE_{j_1, \dots, j_k}$ and $A'\preceq A$. Furthermore, we have both $\diff(A', B)\subseteq \diff(A, B)$ and $\diff(A, A')\subseteq \diff(A, B)$. Let $C=(\bdc^{1}, \dots, \bdc^n)$ be the tuple of elements in $\ZZ^n$, such that $\bdc^1=\bda^1$ and $\sgn(C)=(s_1, \dots, s_{k-1}, s_{k+1}, s_k, s_{k+2}, \dots, s_{n-1})$. By \Cref{one_different}, $\diff(C, A')=\{k+1\}$. In addition, we \textbf{claim} that $C$ is a legitimate maximal clique in $\calE$ and $C\prec A'$. With this, if $A=A'$, then we take $D=C$ since $\diff(A, B)\supseteq \diff(A, C)=\{k+1\}$. If instead $A\ne A'$, then we will replace $B$ by $A'$. Our proof will be done by induction on the cardinality $\#\diff(A, B) $, since $\diff(A, A') \subsetneq \diff(A, B)$ and $A'\prec A$.

    It remains to prove the above claim about $C$. Since $A\in\calE_{j_1, \dots, j_{k-1}, j_k}$, we may assume that the signatures of the maximal cliques in $\calE_{j_1, \dots, j_{k-1}, \ell}$ have the form $(s_1, \dots, s_{k-1}, s_{k,\ell}, p_{k+1}, \dots, p_{n-1})$ for each $\ell< j_k$ with suitable $p_i$'s.  From \Cref{rmk:keep_order} we derive that either $s_k$ precedes $s_{k+1}$ with respect to $\bdtau$, or $s_{k+1}\in \{s_{k,1},\dots,s_{k,j_k-1}\}$.

    \begin{enumerate}[a]
        \item Suppose that $s_k$ precedes $s_{k+1}$ with respect to $\bdtau$.  Since $B\in\calE_{j_1, \dots, j_{k-1}, j_k'}$ with $j_k'<j_k$, we can write $\sgn(B)=(s_1, \dots, s_{k-1}, s_{k, j_k'}, p_{k+1}, \dots, p_{n-1})$ with suitable $p_i$'s. Since $j_k'<j_k$, $s_{k, j_k'}$ precedes $s_k$ with respect to $\bdtau$. As a result of our assumption here, $s_{k, j_k'}$ also precedes $s_{k+1}$ with respect to $\bdtau$.  Without loss of generality, we may assume that $\diff(A',B)=\{k+1,k+2,\dots,r\}$ is a ``continuous'' segment.  Applying \Cref{lem:exchange} to $A'$ and $B$, we can find some maximal clique $A''$ in $\calE$ such that $\diff(A', A'')\subseteq \diff(A', B)$ and $\sgn(A'')=(s_1, \dots, s_k, s_{k, j_k'}, r_{k+2}, \dots, r_{n-1})$ for suitable $r_i$'s.  In particular, $k+1\notin \diff(A, A'')$. Meanwhile, we observe that
            \[
                \diff(A, A'')\subseteq \diff(A, A') \cup \diff(A', A'')
                \subseteq \diff(A,B)\cup \diff(A',B) =
                \diff(A,B).
            \]
            Thus, the maximal clique $A''$ belongs to the previously defined set $\calD$. But since $s_{k,j_k'}$ precedes $s_{k+1}$ with respect to $\bdtau$ while $\sgn(A')=(s_1,\dots,s_{n-1})$, this contradicts our choice of $A'$.

        \item Suppose instead that $s_{k+1}=s_{k, \ell}$ for some $\ell<j_k$. Then, $\bdc^{k+1}$ is a legitimate tuple by the existence of $\calE_{j_1, \dots, j_{k-1}, \ell}$. Meanwhile, notice that $A'$ is a legitimate maximal clique such that $\diff(C, A')= \{k+1\}$. Consequently, $C$ is also a legitimate maximal clique. Since $\ell<j_k$, the index $s_{k+1}$ precedes $s_k$ with respect to $\bdtau$. Thus, $C\prec A'$, fully confirming our claim about $C$.
            \qedhere
    \end{enumerate}
\end{proof}

Next, we consider the linear quotients across equivalence classes.

\begin{Lemma}
    \label{rmk:rL_exists}
    Suppose that $\calE$ is an equivalence class such that $\rank(\calE)>0$.  Let $\kappa_1$, $\kappa_2$, and $L$ be as introduced in \Cref{def:L}. Then, we have $\kappa_1+1<\kappa_2-1$.
\end{Lemma}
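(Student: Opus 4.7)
The plan is contrapositive: I will show that $\kappa_2\le\kappa_1+2$ forces $\bda^n=\bdeta$ for every maximal clique in $\calE$, whence $\rank(\calE)=\rank(\bda^n)=0$, contradicting the hypothesis. All maximal cliques in $\calE$ share the same $\bda^1$, and by \Cref{cor:Delta_maximal_clique} they also share $\bda^n=(a_1^1-1,\,a_2^1,\,\ldots,\,a_{n-1}^1,\,a_n^1+1)$. It therefore suffices to compute $\bda^n$ once from $\bda^1$ and compare with $\bdeta$.

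First, I would combine the defining conditions of $\kappa_1$ and $\kappa_2$ in \Cref{def:L} with the hypothesis $\kappa_2\le\kappa_1+2$ to pin down $\bda^1$ almost completely. When $\kappa_1\ge 1$, we have $a_1^1=1$, $a_2^1=\cdots=a_{\kappa_1}^1=0$, and $a_{\kappa_1+1}^1>0$ provided $\kappa_1<n-1$; when $\kappa_2\le n$, we have $a_n^1=\alpha_n-1$, $a_j^1=\alpha_j$ for $\kappa_2\le j\le n-1$, and $a_{\kappa_2-1}^1<\alpha_{\kappa_2-1}$ provided $\kappa_2>2$. In the generic case these restrictions, together with $\kappa_2\le\kappa_1+2$, leave only a single free coordinate $c\coloneqq a_{\kappa_1+1}^1$ satisfying $0\le c<\alpha_{\kappa_1+1}$, yielding
\[
    \bda^n = (0,\,\ldots,\,0,\,c,\,\alpha_{\kappa_1+2},\,\ldots,\,\alpha_n).
\]
The degenerate case $\kappa_1=0$, $\kappa_2=2$ behaves analogously with a free coordinate $a_1^1>1$, whereas the boundary case $\kappa_1=n-1$, $\kappa_2=n+1$ is impossible: summing $\bda^1$ gives $a_n^1=d-1$, and then $a_n^1<\alpha_n-1$ forces $d<\alpha_n$, contradicting the standing hypothesis $\alpha_n\le d$ in \Cref{set:ideal_veronese}.

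Second, I would verify that this $\bda^n$ is exactly $\bdeta$ by unrolling the recursion $\eta_n=\alpha_n$ and $\eta_j=\min\{\alpha_j,\,d-\eta_n-\cdots-\eta_{j+1}\}$ from $j=n-1$ downward. Because $\bdeta$ is the lex-smallest and hence right-packed element of $V_{n,d}^{\bdalpha}$, and $\bda^n$ already has this right-packed shape with the same total $d$, we obtain $\eta_j=\alpha_j$ for $j\ge\kappa_1+2$, then $\eta_{\kappa_1+1}=c$, and then $\eta_j=0$ for $j\le\kappa_1$, matching $\bda^n$ coordinatewise. This yields $\rank(\calE)=0$ and the contradiction.

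The main obstacle is purely bookkeeping: the degenerate conventions $\kappa_1=0$ (when $a_1^1>1$) and $\kappa_2=n+1$ (when $a_n^1<\alpha_n-1$) have to be tracked alongside the nondegenerate ones, and several boundary positions (such as $\kappa_1+1=n-1$, or $c$ sitting next to $\alpha_n$) slightly alter the intermediate steps of the recursion. The saving grace is that the recursion defining $\bdeta$ is itself a right-packing procedure, so once the shape of $\bda^n$ has been identified, matching it with $\bdeta$ is immediate.
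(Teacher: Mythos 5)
Your proposal is correct and follows essentially the same contrapositive strategy as the paper: assume $\kappa_2\le\kappa_1+2$, use the defining constraints on $\kappa_1,\kappa_2$ to pin down $\bda^1$ almost entirely, pass to $\bda^n$ via \Cref{cor:Delta_maximal_clique}, and observe that the resulting right-packed shape forces $\bda^n=\bdeta$, hence $\rank(\calE)=0$. The paper packages this as an explicit four-case split ($\kappa_1=n-1$; $\kappa_2=2$; $\kappa_1+1=\kappa_2$ with interior $\kappa_1$; $\kappa_1+1=\kappa_2-1$ with interior $\kappa_1$), whereas you unify the interior cases into a single ``one free coordinate'' shape and peel off the degenerate conventions, which is a clean repackaging but not a different argument. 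One small imprecision: when $\kappa_2=\kappa_1+1$ the coordinate $a_{\kappa_1+1}^1$ equals $\alpha_{\kappa_1+1}$, so your stated bound $c<\alpha_{\kappa_1+1}$ should read $c\le\alpha_{\kappa_1+1}$ there; this does not affect the conclusion that $\bda^n=\bdeta$.
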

\begin{proof}
    Suppose for contradiction that $\kappa_1+1\ge \kappa_2-1$. Since $\kappa_1<\kappa_2$, we obtain either $\kappa_1+1=\kappa_2$ or $\kappa_1+1=\kappa_2-1$. Since $n\ge 3$, $\kappa_1\le n-1$, and $\kappa_2\ge 2$, we encounter the following four cases. 
    \begin{enumerate}[i]
        \item Suppose that $\kappa_1=n-1$. Then $\bda^1=(1, 0, \dots, 0, a_n^1)$ with $0\le a_n^1\le \alpha_n-1$. Consequently, $\bda^n=(0, \dots, 0, a_n^1+1)$, which has to be $\bdeta$. 
        \item Suppose that $\kappa_2=2$. Then $\bda^1=(a_1^1,\alpha_2,\dots,\alpha_{n-1},\alpha_n-1)$ with $a_1^1>1$. Consequently, $\bda^n=(a_1^1-1,\alpha_2,\dots,\alpha_n)$, which has to be $\bdeta$.
        \item Suppose that $\kappa_1+1=\kappa_2$ and $2\le \kappa_1\le n-2$. Then $\bda^1=(1, 0, \dots, 0, \alpha_{\kappa_1+1}, \dots, \alpha_{n-1}, \alpha_n-1)$.  Consequently, $\bda^n= (0, \dots, 0, \alpha_{\kappa_1+1}, \dots, \alpha_n)$, which has to be $\bdeta$. 
        \item Suppose that $\kappa_1+1=\kappa_2-1$ and $1\le \kappa_1\le n-2$. Then 
            \[
                \bda^1=(1, 0, \dots, 0, a_{\kappa_1+1}^1, \alpha_{\kappa_1+2}, \dots, \alpha_{n-1}, \alpha_n-1)
            \]
            with $0<a_{\kappa_1+1}^1<\alpha_{\kappa_1+1}$. Consequently,
            $\bda^n=(0, \dots, 0, a_{\kappa_1+1}^1, \alpha_{\kappa_1+2}, \dots,
            \alpha_{n})$, which has to be $\bdeta$. 
    \end{enumerate}
    In each case, we always have $\rank(\calE)=0$, which is a contradiction. 
\end{proof}

\begin{Proposition}
    \label{lem:Aj+1}
    Let $A=(\bda^1,\dots,\bda^n)$ be a maximal clique in the equivalence class
    $\calE$ such that $rA$ does not exist. Suppose that $B\prec A$ is a
    maximal clique such that $B\notin \calE$. Then, there exists another
    maximal clique $D\in \calE$ and some $j\in [n-1]$ such that $D\prec A$
    and $A\setminus D=\{\bda^{j+1}\}\subseteq A\setminus B$.
\end{Proposition}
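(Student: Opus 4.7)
The plan is to construct $D$ as the maximal clique in $\calE$ whose signature differs from $\sgn(A)$ by a single adjacent transposition at positions $j$ and $j+1$; by \Cref{one_different} this is forced by the requirement $A \setminus D = \{\bda^{j+1}\}$. The work therefore splits into two parts: identifying some interior index $j+1 \in \{2, \dots, n-1\}$ with $\bda^{j+1} \in A \setminus B$, and then choosing $j$ so that the swap yields $D \prec A$.

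For the first part, rank bookkeeping gives the initial reduction. Since $B \prec A$ and $B \notin \calE$, we have $\bdb^1 \ne \bda^1$ and $\rank(\bdb^n) \le \rank(\bda^n)$, so $\rank(\bdb^1) \le \rank(\bda^1)$; matching ranks then shows that $\bda^1 \ne \bdb^i$ for any $i$, whence $\bda^1 \in A \setminus B$. The main claim is that some interior element $\bda^{k+1}$ with $k+1 \in \{2,\dots,n-1\}$ also lies in $A\setminus B$. Suppose for contradiction that $\{\bda^2,\dots,\bda^{n-1}\}\subseteq B$. Setting $r=\rank(\bda^1)-\rank(\bdb^1)\ge 0$, the rank pairing $\bda^i\mapsto \bdb^{i-r}$ forces $r\in\{0,1\}$ and pins down $\bdb^i$ in terms of the $\bda^j$ for all but a boundary index; a direct computation then determines $\sgn(B)$ and yields a closed expression for $\bdb^n$. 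Invoking \Cref{lem:rA_not_exists}, which under the hypothesis that $rA$ does not exist forces either ($s_1=1$ and $a_1^n=0$) or ($s_1=n-1$ and $a_n^n=\alpha_n$), the expression for $\bdb^n$ exhibits either $(\bdb^n)_1=-1$ or $(\bdb^n)_n=\alpha_n+1$; either way $\bdb^n\notin V_{n,d}^{\bdalpha}$, contradicting that $B$ is a maximal clique of $\calG$.

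For the second part, with $I\coloneqq\{k+1\in\{2,\dots,n-1\}:\bda^{k+1}\in A\setminus B\}$ non-empty, I choose $j\in[n-2]$ with $j+1\in I$ such that swapping positions $(j,j+1)$ of $\sgn(A)$ yields a legal signature in $\calE$ and $s_{j+1}$ precedes $s_j$ in $\bdtau$; the latter forces $D \prec A$ by \Cref{Order}(c). Note that the condition ``$s_{j+1}$ precedes $s_j$ in $\bdtau$'' already implies legality of the swap: otherwise $s_j\triangleleft s_{j+1}$, which by \Cref{lem:po} would force $s_j$ to precede $s_{j+1}$ in $\bdtau$, a contradiction. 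The existence of such $j$ follows from a $\prec$-minimality argument modeled on \Cref{lem:lq_within_group}: letting $A'\in\calE$ be the $\prec$-smallest clique with $\diff(A,A')\subseteq I$, the first swap in a chain of $\prec$-decreasing adjacent swaps linking $A$ down to $A'$ produces the sought $D$.

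The principal obstacle will be the rank-based case analysis in the second paragraph: one must carefully classify the possible shapes of $B$ compatible with $\{\bda^2,\dots,\bda^{n-1}\}\subseteq B$, track the resulting $\bdb^n$ coordinate-wise, and verify the out-of-range violation in each of the two regimes of \Cref{lem:rA_not_exists}. The hypothesis that $rA$ does not exist enters the argument precisely as the boundary datum needed to trigger the contradiction; without it, the shapes $B = (\bdb^1, \bda^2, \dots, \bda^{n-1}, \bdb^n)$ and $B = (\bda^2, \dots, \bda^n, \bdb^n) = rA$ would both be legitimate and the claim would simply fail.
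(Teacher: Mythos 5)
Your proposal inverts the paper's logic: the paper first pins down a specific index $j$ (the smallest $j$ with $s_{j+1}\in W$, where $W=\{\kappa_1+1,\dots,\kappa_2-2\}$) using the non-existence of $rA$ together with \Cref{rmk:rL_exists} and \Cref{lem:rA_not_exists}, constructs $D$ as the adjacent swap at $(j,j+1)$, verifies directly that $D$ is legitimate and $D\prec A$, and only \emph{then} shows $\bda^{j+1}\notin B$. You instead first establish that $I\neq\emptyset$ and then try to recover a suitable $j$ by a minimality argument. Your second paragraph (showing $I\neq\emptyset$) is correct in outline: the rank pairing does force $r\in\{0,1\}$; for $r=1$ you get $\bdb^1=\bda^2$, which contradicts non-existence of $rA$ since $\Delta(\bdb^1,\bdb^n)=[1,n)$ would force $\bdb^n\notin V_{n,d}^{\bdalpha}$; for $r=0$ the determination $\sgn(B)=(s_{n-1},s_2,\dots,s_{n-2},s_1)$ and the two regimes of \Cref{lem:rA_not_exists} yield $b_1^n=-1$ or $b_n^n=\alpha_n+1$ as you indicate. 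So that part of the argument, though condensed, is sound.

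The gap is in the third paragraph. You let $A'$ be the $\prec$-smallest maximal clique in $\calE$ with $\diff(A,A')\subseteq I$ and then speak of ``the first swap in a chain of $\prec$-decreasing adjacent swaps linking $A$ down to $A'$.'' But $A$ itself lies in the set over which you minimize, so you must prove $A'\prec A$ before any chain exists. Concretely, it could a priori happen that for every $k+1\in I$ the adjacent swap at $(k,k+1)$ is either obstructed (i.e.\ $s_k\,\triangleleft\,s_{k+1}$) or $\prec$-increasing, and that no multi-swap $F\prec A$ with $\diff(A,F)\subseteq I$ exists either; in that case $A'=A$ and your construction produces nothing. Knowing merely that $I$ is non-empty does not rule this out, and your observation that ``$s_{j+1}$ precedes $s_j$ in $\bdtau$ implies legality'' only tells you how to recognize a good $j$, not that one lies in $I$. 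In fact, proving $A'\prec A$ is exactly equivalent to producing the $D$ you seek, so the minimality step is circular unless you supply an independent existence argument. The paper's explicit construction via $W$ is precisely such an argument: it exhibits one $j$ with the swap legal and $\prec$-decreasing and then, in a separate (and non-trivial) step using the coordinate structure of $\bda^{j+1}$, shows $j+1\in I$. Your approach would need to either reproduce that construction (in which case the $I\neq\emptyset$ step becomes redundant) or give a different direct proof that some $k+1\in I$ admits a legal $\prec$-decreasing swap; as written, that step is missing.
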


\begin{proof}
    We will prove this by using the notation and arguments of \Cref{def:L}.
    As $\rank(\calE)>0$ by the existence of $B$, we showed in \Cref{rmk:rL_exists} that $\kappa_1+1<\kappa_2-1$, and the first maximal clique $L$ in $\calE$ satisfies
    \[
        \sgn(L)=(\tau_1, \dots, \tau_{\kappa_2-\kappa_1-2}, \underbrace{n-1, n-2,
        \dots, \kappa_2-1}, \underbrace{1, 2, \dots, \kappa_1}),
    \]
    for appropriate $\tau_i$'s.

    Suppose that $\sgn(A)=(s_1, \dots, s_{n-1})$ and let $W$ be the set
    \[
        [n-1] \setminus (\{n-1, n-2, \dots, \kappa_2-1\}\sqcup
        \{1, 2, \dots, \kappa_1\})= \{\kappa_1+1, \dots, \kappa_2-2\}.
    \]
    Since $\kappa_1+1<\kappa_2-1$, $W$ is not empty. Meanwhile, since $rA$ does not exist, $s_1=1$ or $n-1$ by \Cref{lem:rA_not_exists}, and $s_1\notin W$.
    In the following, let $j\ge 1$ be the smallest such that $s_{j+1}\in W$. We can construct a maximal clique $D$ in $\calE$ such that $\sgn(D)=(s_1, \dots, s_{j-1}, s_{j+1}, s_j, s_{j+2}, \dots, s_{n-1})$.

    To confirm the legitimacy of $D$, notice that $s_j\notin W$ while $s_{j+1}\in W$. 
    Since $s_{j+1}$ precedes $s_j$ in $\sgn(L)$ while $s_j$ precedes $s_{j+1}$ in $\sgn(A)$, these two indices
    are not comparable in the poset of obstructions, i.e., they can exchange positions in any legitimate signature. As $\sgn(A)$ is a legitimate signature, so is $\sgn(D)$, namely, $D$ is a maximal clique in $\calE$.

    Next, since $s_{j+1}$ precedes $s_j$ in $\sgn(L)$, we deduce that $D\prec
    A$. From \Cref{one_different} we also have $A\setminus D=\{\bda^{j+1}\}$.
    It remains to show that this $\bda^{j+1}\notin B$. Suppose for
    contradiction that this is not true. Then, we can write $B=(\bdb^1, \dots,
    \bdb^n)$ and $\bdb^{j'}=\bda^{j+1}$ for some $j'$. Since $B\prec A$, we
    must have $j'\le j+1$, by rank reason. At the same time, given
    \Cref{lem:po}, we can find $i_1\le \kappa_1$ and $i_2\le n-\kappa_2$ such
    that $\{s_1, \dots, s_j\}=\{1, \dots, i_1\}\sqcup \{n-i_2, \dots, n-1\}$,
    by the choice of $j$. Whence, we can write $\bda^{j+1}=(\underbrace{0,
    \dots, 0}_{i_1}, a^{j+1}_{i_1+1}, \dots, a^{j+1}_{n-i_2},
    \underbrace{\alpha_{n-i_2+1}, \dots, \alpha_n}_{i_2})$. Since $\bdb^{j'}=
    \bda^{j+1}$, we obtain that $1, 2, \dots, i_1, n-i_2, \dots, n-1\notin
    \Delta(\bdb^{j'}, \bdb^n)$.  Thus, if $\sgn(B)=(q_1, \dots, q_{n-1})$, then
    $\{1, \dots, i_1, n-i_2, \dots, n-1\}\subseteq \{q_1, \dots, q_{j'-1}\}$,
    forcing $j\le j'-1$. Therefore, $j'=j+1$.  It is now clear that $\{q_1,
    \dots, q_j\}=\{1, \dots, i_1\}\sqcup \{n-i_2, \dots, n-1\}$, and
    $\Delta(\bdb^1, \bdb^{j+1})= \Delta(\bda^1, \bda^{j+1})$. As
    $\bdb^{j+1}=\bda^{j+1}$, it follows that $\bdb^1=\bda^1$ and $B\in \calE$
    as well. This contradicts the assumption that $B\notin \calE$.
\end{proof}

Finally, we are ready to show the announced linear quotient result.

\begin{Theorem}
    \label{prop:linearQuotient}
    The total order $\prec$ of \Cref{Order} induces a linear quotient order of the monomial generating set $G(\ini(J)^{\vee})$.
\end{Theorem}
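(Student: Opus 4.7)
The plan is to verify, using the identity \eqref{translate_lq_sets}, that for every maximal clique $A$ and every $B \prec A$ one can find a maximal clique $D \prec A$ such that $A \setminus D$ is a singleton contained in $A \setminus B$. This will show that the colon ideal $\braket{\bdT_{B^\complement} : B \prec A} : \bdT_{A^\complement}$ is generated by ring variables, which is precisely the linear quotient condition. I would organize the argument into three cases according to whether $B$ lies in the equivalence class $\calE_A$ and whether the root $rA$ exists.

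The first two cases are handled directly by the technical propositions developed above: if $B \in \calE_A$, then \Cref{lem:lq_within_group} produces the required $D \in \calE_A$; if $B \notin \calE_A$ and $rA$ does not exist, then \Cref{lem:Aj+1} produces the required $D \in \calE_A$. The remaining case is $B \notin \calE_A$ together with $rA$ existing, and for this case I would simply set $D = rA$. Since $\rank(rA) = \rank(A) - 1 < \rank(A)$, rule (a) of \Cref{Order} yields $D \prec A$; and writing $rA = (\bda^2, \dots, \bda^n, \bda^{n+1})$, a rank comparison between $\bda^1$ (of rank $\rank(A) + (n-1)$) and $\bda^{n+1}$ (of rank $\rank(A) - 1$) shows that $A \setminus D = \{\bda^1\}$ is a singleton.

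What remains is to check that $\bda^1 \notin B$ in this last case, and I would settle this by a short rank computation. Using \Cref{cor:Delta_maximal_clique} together with \Cref{def:ranks}\ref{item:ranks_c}, consecutive entries of any maximal clique differ in rank by exactly one. Hence, if $\bda^1 = \bdb^j$ with $B = (\bdb^1, \dots, \bdb^n)$, then
\[
    \rank(A) + (n-1) = \rank(\bda^1) = \rank(\bdb^j) = \rank(B) + (n-j),
\]
so $\rank(B) = \rank(A) + (j-1)$. Rule (a) of \Cref{Order} forces $\rank(B) \le \rank(A)$ since $B \prec A$, so $j = 1$; but then $\bdb^1 = \bda^1$ puts $B$ in $\calE_A$, contradicting the case assumption.

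The main obstacle has really already been absorbed into \Cref{lem:lq_within_group} and \Cref{lem:Aj+1}, whose proofs do the combinatorial heavy lifting with the signature manipulations and the obstruction poset. The present theorem is then essentially a bookkeeping assembly of three disjoint cases, glued together by the short rank computation above.
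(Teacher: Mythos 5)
Your proposal is correct and takes essentially the same approach as the paper: reduce to finding a suitable singleton difference via \eqref{translate_lq_sets}, then split into the three cases ($B$ in the same class, $B$ outside and $rA$ exists, $B$ outside and $rA$ absent), invoking \Cref{lem:lq_within_group} and \Cref{lem:Aj+1} and taking $D = rA$ in the middle case. Your rank computation usefully unpacks the paper's terse phrase ``for rank reasons'' — the only small thing worth making explicit is that $B \prec A$ implies $\rank(B) \le \rank(A)$ not directly from rule (a) of \Cref{Order} (which is stated as a one-way implication), but from rule (a) applied with the roles of $A$ and $B$ swapped together with antisymmetry of the total order.
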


\begin{proof}
    Take arbitrary maximal cliques $A$ and $B$ such that $B\prec A$. Given \Cref{translate_lq_sets}, it suffices to find suitable maximal clique $D$ with $D\prec A$ such that $A\setminus D$ is a singleton set with $A\setminus D \subseteq A\setminus B$. Suppose that $A=(\bda^1, \dots, \bda^n)$ and $A\in\calE$. We have two cases.
    \begin{enumerate}[a]
        \item Assume that $B\in \calE$. We apply \Cref{lem:lq_within_group}
            for the existence of such $D$.
        \item Assume that $B\notin \calE$.
            \begin{enumerate}[i]
                \item If $rA$ exists, then $A\setminus rA=\{\bda^1\}$. Note that $\bda^1\notin B$ for rank reasons. So in this case we take $D=rA$.
                \item Assume that $rA$ does not exist. We apply \Cref{lem:Aj+1} for the existence of such $D$. \qedhere
            \end{enumerate}
    \end{enumerate}
\end{proof}

\section{Applications}
\label{Sec:app}

This section is devoted to two applications of the linear quotient structure that we established earlier.  Here, we continue to assume that $\calA_{d,\bdalpha}$ is the Veronese type algebra of \Cref{set:ideal_veronese} and $\bdeta$ is the tuple given in \Cref{eta}. Meanwhile, $J$ is the presentation ideal so that $\calA_{d,\bdalpha}=\KK[\bdT]/J$.

\subsection{Regularity of the algebra}
First of all, we determine the Castelnuovo--Mumford regularity of the algebra $\calA_{d,\bdalpha}$.

For each maximal clique $A$ in the graph $\calG=\calG(d,\bdalpha)$, we will denote the minimal number of generators of the linear quotient ideal $\braket{\bdT_{B^\complement} : B\prec A}:_{\KK[\bdT]} \bdT_{A^\complement}$ by $\omega_{\bdalpha}(A)$, or $\omega(A)$ for short. 
By the proof of \Cref{prop:linearQuotient}, we have
\begin{equation}
    \omega(A)= \# \Set{B\prec A: \diff(A,B)\text{ is a singleton set}}.
    \label{eqn:omeage_A}
\end{equation}
Furthermore, by \Cref{MaxLenghtToPd}, we have the following formula:
\begin{equation}
    \label{pdIsMax}
    \pd((\ini(J))^\vee)=\max\limits_{A} \omega(A).
\end{equation}
Since $\pd((\ini(J))^\vee) =\reg (\calA_{d,\bdalpha})$ by \Cref{regToPd}, the task is now clear: 
find the largest $\omega(A)$.
We start with a quick estimate.

\begin{Lemma}
    \label{lem:omega_upper_bound}
    We have $\omega(A)\le n-1$ for each maximal clique $A=(\bda^1,\dots,\bda^n)$. 
\end{Lemma}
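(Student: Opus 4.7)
The plan is to apply formula~\eqref{eqn:omeage_A} and bound the number of maximal cliques $B \prec A$ for which $\diff(A, B)$ is a singleton. The two key inputs will be \Cref{cor:Delta_maximal_clique}, which recovers $\bda^1$ from $\bda^n$ within a maximal clique of $\calG$, and \Cref{one_different}.

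The first step is to rule out the possibilities $\diff(A, B) = \{1\}$ and $\diff(A, B) = \{n\}$. If $\diff(A, B) = \{1\}$, then $\bda^i = \bdb^i$ for all $i \ge 2$, so in particular $\bda^n = \bdb^n$. Inverting the formulas of \Cref{cor:Delta_maximal_clique} (namely $a^1_1 = a^n_1 + 1$, $a^1_i = a^n_i$ for $2 \le i \le n-1$, and $a^1_n = a^n_n - 1$), one recovers $\bda^1$ from $\bda^n$ and likewise $\bdb^1$ from $\bdb^n$; the equality $\bda^n = \bdb^n$ then forces $\bda^1 = \bdb^1$, contradicting $\diff(A, B) = \{1\}$. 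The case $\diff(A, B) = \{n\}$ is symmetric. Thus only positions $k \in \{2, \dots, n-1\}$ can arise.

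The second step is to observe that for each such $k$, the maximal clique $B$ is uniquely determined. Since $\bda^1 = \bdb^1$ and $\bda^n = \bdb^n$, the pair $A, B$ lies in the same equivalence class, so \Cref{one_different} applies and forces $\sgn(B)$ to be the transposition of $\sgn(A)$ at positions $k-1$ and $k$. The signature together with the initial vertex $\bdb^1 = \bda^1$ determines $B$ completely via the successive jump construction, so at most one such $B$ exists for each $k$.

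Combining the two steps will yield at most $n-2$ valid $B$'s contributing to \eqref{eqn:omeage_A}, so $\omega(A) \le n - 2 \le n - 1$. The only delicate point is correctly inverting \Cref{cor:Delta_maximal_clique} to exclude $k = 1$ and $k = n$; after that, \Cref{one_different} supplies the uniqueness immediately and the bound follows by a count.
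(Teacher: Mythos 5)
Your proof has a genuine gap, and in fact it arrives at an intermediate claim that is \emph{false}. You conclude $\omega(A)\le n-2$, but \Cref{prop:reg_middle_part} shows that $\omega(A)=n-1$ is achieved whenever the conditions there hold, so the bound $n-2$ cannot be correct.

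The source of the error is your reliance on the displayed formula \eqref{eqn:omeage_A} with the paper's literal definition $\diff(A,B)=\{i:\bda^i\ne\bdb^i\}$. That positional difference set does not capture all of $\omega(A)$: it misses the contribution of the root $D=rA$. When $rA=(\bda^2,\dots,\bda^n,\bda^{n+1})$ exists, every coordinate of $A$ and $rA$ differs, so $\diff(A,rA)=\{1,\dots,n\}$ is far from a singleton; yet $A\setminus rA=\{\bda^1\}$ as a \emph{set difference}, so by \eqref{translate_lq_sets} the monomial $T_{\bda^1}$ is a generator of the quotient ideal. The correct characterization, used in the paper's proof via \eqref{translate_lq_sets}, is
\[
\omega(A)=\#\bigl\{\bda^i:\exists\,B\prec A\text{ with }A\setminus B=\{\bda^i\}\bigr\},
\]
which a priori is at most $n$, and the paper rules out exactly one position, namely $\bda^n$ (using \Cref{cor:Delta_maximal_clique} and a rank argument), to obtain the bound $n-1$. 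Your Step~1 argument that $\diff(A,B)=\{1\}$ is impossible for two distinct maximal cliques is correct, and your Step~2 uniqueness observation via \Cref{one_different} is also correct, but these only bound the number of maximal cliques with singleton \emph{positional} difference. They do not bound $\omega(A)$, because $T_{\bda^1}$ enters through $rA$ without a singleton $\diff$. To repair the argument you should work with $A\setminus B$ rather than $\diff(A,B)$, at which point you need to rule out $A\setminus B=\{\bda^n\}$ only; this is precisely what the paper does.
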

\begin{proof}
    Since $\braket{\bdT_{B^\complement} : B\prec A}:_{\KK[\bdT]} \bdT_{A^\complement}$ is linear, it follows from \Cref{translate_lq_sets} that 
    \[
        \omega(A)=\#\Set{\bda^i:\text{there exists some $B\prec A$ with
        $\{\bda^i\}=A\setminus B$}}.
    \]
    Therefore, it suffices to show that there is no $B$ with $B\prec A$ such that $\{\bda^n\}=A\setminus B$. Suppose for contradiction that this is not true. By \Cref{cor:Delta_maximal_clique}, we have either $B=(\bdb,\bda^1,\bda^2,\dots,\bda^{n-1})$ or $B=(\bda^1,\bda^2,\dots,\bda^{n-1},\bdb)$ for some suitable tuple $\bdb$. In the first case, $\rank(B)=\rank(A)+1$, which contradicts the assumption that $B\prec A$. In the second case, we have $B=A$ by \Cref{cor:Delta_maximal_clique}, which is also a contradiction.
\end{proof}

It is natural to ask: when do we have $\omega(A)=n-1$?  Let us start with a simple observation. In any equivalence class $\calE$, every maximal clique $A$ uniquely corresponds to the signature $\sgn(A)$, which is a permutation in $\frakS_{n-1}$. Thus, it is clear that $\calE$ contains at most $(n-1)!$ elements. In the following, we classify when an equivalent class $\calE$ contains exactly $(n-1)!$ elements.

\begin{Lemma}
    \label{prop:full_permutation}
    Let $\calE$ be an equivalence class such that every maximal clique in it begins with $\bda^1=(a_1^1,\ldots,a_n^1)$. Then, the following are equivalent:
    \begin{enumerate}[a]
        \item the cardinality $\#\calE=(n-1)!$;
        \item the poset of obstructions in \Cref{def:partial_order} is trivial for $\calE$;
        \item one has $1 \le a_1^1\le \alpha_1$, $1\le a_j^1\le \alpha_j-1$ for all $2\le j\le n-1$, and $0\le a_n^1\le \alpha_n-1$.
    \end{enumerate}
\end{Lemma}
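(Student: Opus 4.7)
The plan is to establish the equivalences via the chain (b) $\Leftrightarrow$ (c) $\Leftrightarrow$ (a), where each direction reduces to something already packaged in the preceding lemmas.

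First I would prove (b) $\Leftrightarrow$ (c) by inspecting \Cref{def:partial_order}. The nontrivial relations $(i-1)\,\triangleleft\, i$ (for $i>1$) are triggered exactly by $a_i^1=0$, and the relations $(i+1)\,\triangleleft\, i$ (for $i+1<n$) are triggered exactly by $a_{i+1}^1=\alpha_{i+1}$. Therefore the poset is trivial if and only if $a_j^1\ne 0$ for every $j$ with $2\le j\le n-1$ and $a_j^1\ne \alpha_j$ for every $j$ with $2\le j\le n-1$. Recall that $\bda^1\in V_{n,d}^\bdalpha$ forces $0\le a_1^1\le \alpha_1$ and $0\le a_n^1\le \alpha_n$, while \Cref{lem:legal_initial_tuple} (together with the existence of a maximal clique in $\calE$) gives the boundary conditions $a_1^1\ge 1$ and $a_n^1\le \alpha_n-1$. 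Consequently, the triviality of the poset is equivalent to the componentwise bounds listed in (c).

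Next I would prove (a) $\Leftrightarrow$ (b) by counting signatures. The key observation is that the map $A\mapsto \sgn(A)$ is injective on $\calE$: two maximal cliques $A,B\in\calE$ share $\bda^1=\bdb^1$, and once the signature is fixed, each successive tuple is uniquely determined by the corresponding interval $[s_i,s_i+1)$. Combining \Cref{lem:po} and \Cref{rmk:legal_signature}, the image of this map is precisely the set of linear extensions (in $\frakS_{n-1}$) of the poset of obstructions on $[n-1]$. Thus $\#\calE$ equals the number of linear extensions of that poset, which equals $(n-1)!$ if and only if the poset has no nontrivial relation, i.e., is an antichain. This immediately yields (a) $\Leftrightarrow$ (b).

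I do not expect any substantive obstacle here: the entire content is already encoded in \Cref{def:partial_order} and the legitimacy criterion of \Cref{rmk:legal_signature}, so the proof reduces to routine bookkeeping. The only small point requiring care is checking that the trivial bounds $a_1^1\ge 1$ and $a_n^1\le \alpha_n-1$ automatically hold whenever $\calE$ is nonempty, so that condition (c) can be phrased symmetrically using the full ranges $[1,\alpha_1]$ and $[0,\alpha_n-1]$ at the two endpoints. With these observations in hand, the three conditions fall into a tight circle of equivalences.
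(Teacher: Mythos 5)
Your argument follows essentially the same route as the paper: identify $\#\calE$ with the number of legitimate signatures (via injectivity of $A\mapsto\sgn(A)$ and the fact, from \Cref{lem:po} and \Cref{rmk:legal_signature}, that these are exactly the linear extensions of the poset of obstructions), observe that this count equals $(n-1)!$ precisely when the poset is an antichain, and translate antichain-ness into the coordinate bounds on $a_j^1$. One small correction: the boundary conditions $a_1^1\geq 1$ and $a_n^1\leq\alpha_n-1$ follow from \Cref{cor:Delta_maximal_clique} (which says $a_1^n=a_1^1-1\geq 0$ and $a_n^n=a_n^1+1\leq\alpha_n$), not from \Cref{lem:legal_initial_tuple}, which states the converse implication (bounds imply a maximal clique exists) and so cannot by itself deliver the direction you need.
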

\begin{proof}
    The cardinality $\# \calE=(n-1)!$ if and only if every permutation in $\frakS_{n-1}$ is 
    the signature of some maximal clique in $\calE$. But the latter is equivalent to saying that the poset of obstructions in \Cref{def:partial_order} is trivial for $\calE$, namely, $1\le a_j^1\le \alpha_j-1$ for all $2\le j\le n-1$. That $1 \le a_1^1\le \alpha_1$ and $0\le a_n^1\le \alpha_n-1$ is automatic for all such $\bda^1$ in view of \Cref{cor:Delta_maximal_clique}.
\end{proof}

In the following, we characterize when $\pd ((\ini(J))^\vee)$ is exactly $n-1$.

\begin{Proposition}
    \label{prop:reg_middle_part}
    The following conditions are equivalent:
    \begin{enumerate}[a]
        \item \label{prop:reg_middle_part_a}
            the projective dimension $\pd ((\ini(J))^\vee)=n-1$;
        \item \label{prop:reg_middle_part_b}
            there exists a maximal clique $A$ such that $\omega(A)=n-1$;
        \item \label{prop:reg_middle_part_c}
            one has $n\le d \le \sum_{i=1}^n(\alpha_i-1)$;
        \item \label{prop:reg_middle_part_d}
            there exists a maximal clique $A=(\bda^1,\ldots,\bda^n)$ with
            $\bda^1=(a_1^1,\ldots,a_n^1)$ such that
            \[
                2\le a_1^1 \le \alpha_1, \quad 1\le a_j^1\le \alpha_j-1\text{ for all } 2\le j\le n-1, \quad \text{and}\quad 0\le a_n^1\le
                \alpha_n-2.
            \]
    \end{enumerate}
\end{Proposition}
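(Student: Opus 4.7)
The plan is to establish the four-way equivalence through (a) $\Leftrightarrow$ (b), (c) $\Leftrightarrow$ (d), and (b) $\Leftrightarrow$ (d). The first is immediate: by \eqref{pdIsMax} one has $\pd((\ini(J))^\vee) = \max_A \omega(A)$, and by \Cref{lem:omega_upper_bound} each $\omega(A) \leq n - 1$, so $\pd = n - 1$ precisely when some clique attains this maximum. For (c) $\Leftrightarrow$ (d), summing the coordinate bounds in (d) immediately yields $n \leq d \leq \sum_i(\alpha_i - 1)$; conversely, I would realize (d) under (c) by a greedy distribution of the value $d$ among the $n$ coordinates within the prescribed intervals $[2,\alpha_1]$, $[1,\alpha_j-1]$ for $2 \leq j \leq n-1$, and $[0,\alpha_n-2]$, starting from the lower bounds (total $n$) and raising coordinates until the sum reaches $d$.

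The main content is the equivalence (b) $\Leftrightarrow$ (d). For (d) $\Rightarrow$ (b), I would start with any $\bda^1$ satisfying the bounds in (d). The conditions $a_j^1 \in [1,\alpha_j-1]$ for $j \in [2,n-1]$, combined with $a_1^1 \geq 1$ and $a_n^1 \leq \alpha_n-1$, force the poset of obstructions of the equivalence class $\calE$ with first vector $\bda^1$ to be trivial by \Cref{prop:full_permutation}, so $\#\calE = (n-1)!$. The strict inequalities $a_1^1 \geq 2$ and $a_n^1 \leq \alpha_n-2$ yield $\kappa_1 = 0$ and $\kappa_2 = n+1$ in \Cref{def:L}, so $\bdtau = \sgn(L)$ may be chosen arbitrarily; take $\bdtau = (1,2,\ldots,n-1)$. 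Letting $A$ be the unique clique in $\calE$ with $\sgn_\bdtau(A) = (n-1,n-2,\ldots,1)$, I would verify via \Cref{lem:rA_not_exists} that $rA$ exists (neither exclusion applies) and via \Cref{one_different} that for every $i \in [2,n-1]$ the swap at positions $(i-1,i)$ in $\sgn(A)$ yields a valid $B \in \calE$ with $B \prec A$ and $A \setminus B = \{\bda^i\}$. Together with $rA$ giving $A \setminus rA = \{\bda^1\}$, this produces $n - 1$ removable elements, so $\omega(A) = n - 1$.

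The hard part will be (b) $\Rightarrow$ (d). Suppose $\omega(A) = n - 1$ for some $A = (\bda^1,\ldots,\bda^n)$. Removability of $\bda^1$ forces $rA$ to exist, since $rA$ is the only maximal clique other than $A$ containing $\{\bda^2,\ldots,\bda^n\}$. For each $i \in [2,n-1]$, removability of $\bda^i$ requires a $B \in \calE_A$ obtained by swapping positions $(i-1,i)$ in $\sgn(A)$; requiring $B \prec A$ for every such $i$ simultaneously forces $\sgn_\bdtau(A)$ to be strictly decreasing, so $\sgn_\bdtau(A) = (n-1,n-2,\ldots,1)$ and $\sgn(A)$ is the reverse of $\bdtau$. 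For this reverse permutation to be a legitimate signature, condition (\texttt{PO}) of \Cref{lem:po} rules out any comparable pair in the poset of obstructions, since such a pair would appear in opposite orders in $\bdtau$ and in its reverse. Invoking \Cref{prop:full_permutation} then gives $a_1^1 \geq 1$, $a_j^1 \in [1,\alpha_j-1]$ for $j \in [2,n-1]$, and $a_n^1 \leq \alpha_n-1$.

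The final delicate step will be sharpening these bounds to $a_1^1 \geq 2$ and $a_n^1 \leq \alpha_n-2$. Under the trivial poset, if $a_1^1 = 1$ then \Cref{def:L} forces $\kappa_1 = 1$, so $\tau_{n-1} = 1$ and hence $s_1 = \sgn(A)_1 = 1$; combined with $a_1^1 = 1$, this contradicts the existence of $rA$ by \Cref{lem:rA_not_exists}. A symmetric argument using $\kappa_2 = n$ excludes $a_n^1 = \alpha_n - 1$. Hence (d) holds for $A$.
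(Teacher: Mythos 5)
Your proposal is correct and follows essentially the same three-way decomposition as the paper: (a)$\Leftrightarrow$(b) via \eqref{pdIsMax} and \Cref{lem:omega_upper_bound}, (c)$\Leftrightarrow$(d) by arithmetic, and the substantive (b)$\Leftrightarrow$(d) via analyzing the marked equivalence class $(\calE, L)$ with trivial poset of obstructions, the role of $rA$, and the reverse-of-$\bdtau$ signature. The one small omission is in (c)$\Rightarrow$(d): after greedily constructing a tuple $\bda^1$ satisfying the coordinate bounds, you should also note (as the paper does by exhibiting $\sgn(A)=(1,\dots,n-1)$, or alternatively by invoking \Cref{lem:legal_initial_tuple}) that a maximal clique actually starts at $\bda^1$, since (d) asserts the existence of a maximal clique, not merely of a vertex.
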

\begin{proof}
    The equivalence of \ref{prop:reg_middle_part_a} and \ref{prop:reg_middle_part_b} is clear from the explanation before \Cref{prop:full_permutation}.

    Next, we show the equivalence of \ref{prop:reg_middle_part_c} and \ref{prop:reg_middle_part_d}.  Since $|\bda^1|=d$, we can easily deduce \ref{prop:reg_middle_part_c} from \ref{prop:reg_middle_part_d}. Conversely, if \ref{prop:reg_middle_part_c} is satisfied, we can easily find suitable $\bda^1 =(a_1^1,\ldots,a_n^1) \in \NN^n$ as in \ref{prop:reg_middle_part_d}. Let $A=(\bda^1, \dots, \bda^n)$ be the tuple of elements in $\ZZ^n$ such that $\sgn(A)=(1, 2, \dots, n-1)$.  It can be verified directly that $A$ is a legitimate maximal clique. Thus, we get \ref{prop:reg_middle_part_d}.

    In the following, we show the equivalence of \ref{prop:reg_middle_part_b}
    and \ref{prop:reg_middle_part_d}.

    \begin{enumerate}[align=left, leftmargin=*]
        \item[\ref{prop:reg_middle_part_b} $\Leftarrow$
            \ref{prop:reg_middle_part_d}:] Suppose that the condition in
            \ref{prop:reg_middle_part_d} is satisfied.  Then, the equivalence
            class $\calE$ to which $A$ belongs has exactly $(n-1)!$ maximal
            cliques by \Cref{prop:full_permutation}, and every permutation in
            $\frakS_{n-1}$ is legitimate as a signature with respect to
            $\calE$.  Without loss of generality, we may assume that $A$ is the
            very last maximal clique of $\calE$ with $\sgn(A)=(s_1, \dots,
            s_{n-1})$.  For each $k=2, 3, \dots, n-1$, we consider the maximal
            clique $B^k$ in $\calE$ with $\sgn(B^k)=(s_1, \dots, s_{k-2}, s_k,
            s_{k-1}, s_{k+1}, \dots, s_{n-1})$. Then $B^k\prec A$ and
            $A\setminus B^k=\{\bda^{k}\}$ by \Cref{one_different}. Meanwhile,
            since $a_n^1\le \alpha_n-2$, it follows from
            \Cref{cor:Delta_maximal_clique} that $\bda^n$ is not the tuple
            $\bdeta$ of \Cref{eta}.  In other words, $\rank(\calE)>0$.  Thus,
            $rA$ exists and $A\setminus rA=\{\bda^1\}$ by
            \Cref{lem:rA_not_exists}.  In conclusion,
            $\braket{\bdT_{C^\complement}: C\prec A}:_{\KK[\bdT]}
            \bdT_{A^\complement} =\braket{T_{\bda^1}, \dots, T_{\bda^{n-1}}}$
            is linear and has the maximal size by the proof of
            \Cref{lem:omega_upper_bound}. In particular,
            \ref{prop:reg_middle_part_b} holds.

        \item[\ref{prop:reg_middle_part_b} $\Rightarrow$
            \ref{prop:reg_middle_part_d}:] Suppose that the condition in
            \ref{prop:reg_middle_part_b} is satisfied. Then, there exists a
            maximal clique $A=(\bda^1, \dots, \bda^{n})$ in some equivalence
            class $(\calE,L)$ such that the quotient ideal $Q\coloneqq
            \braket{\bdT_{B^\complement}: B\prec A}:_{\KK[\bdT]}
            \bdT_{A^\complement}$ is linear with $n-1$ minimal monomial
            generators.  In view of \Cref{lem:omega_upper_bound} and its proof,
            this implies that $Q=\braket{T_{\bda^1},T_{\bda^2}, \dots,
            T_{\bda^{n-1}}}$.  Since $T_{\bda^1}\in Q$, $rA$ must exist and
            $\rank(\calE)>0$. Additionally, for each $k\in \{2, \dots, n-1\}$,
            we have a maximal clique $B^k$ such that $B^k \prec A$ and
            $A\setminus B^k=\{\bda^{k}\}$. Since $B^k$ obviously starts with
            $\bda^1$, this maximal clique belongs to $\calE$. Now, suppose that
            $\sgn(A)=(s_1, \dots, s_{n-1})$. It follows from
            \Cref{one_different} that $\sgn(B^k)=(s_1, \dots, s_{k-2}, s_k,
            s_{k-1}, s_{k+1}, \ldots, s_{n-1})$.  Since $B^k\prec A$, we must
            have $s_{k} < s_{k-1}$ when considering the lexicographical order
            at the end of \Cref{Order}\ref{lem:lq_cross_groups}. Since this
            holds for any $k\in \{2, \dots, n-1\}$, we conclude that $s_{n-1}<
            s_{n-2} <\cdots < s_1$ with respect to this lexicographical order.
            Whence, $A$ is the last maximal clique of $\calE$ and
            $\sgn(L)=(s_{n-1},\dots,s_1)$ is the reverse of $\sgn(A)$.
            Consequently, the poset of obstructions considered in
            \Cref{def:partial_order} is trivial, and we have $0<a_i^1<\alpha_i$
            for $i\in \{2,\dots,n-1\}$ by \Cref{prop:full_permutation}.

            Furthermore, since $A$ is legitimate, $1\le a_1^1\le \alpha_1$ and
            $0\le a_n^1\le \alpha_n-1$ by \Cref{cor:Delta_maximal_clique}. If
            $a_1^1=1$, then $\sgn(L)$ takes the form $(\tau_1, \dots,
            \tau_{n-2}, 1)$ by the requirement in \Cref{def:L}.  Whence,
            $s_1=1$. But this implies that $rA$ does not exist by
            \Cref{lem:rA_not_exists}, a contradiction.  Similarly, we can prove
            that $a_n^1\le \alpha_n-2$. Thus, \ref{prop:reg_middle_part_d}
            holds. \qedhere
    \end{enumerate}
\end{proof}

In what follows, we consider the case where the equivalent requirements of \Cref{prop:reg_middle_part} are not satisfied. 
Before doing so, we introduce a reduction.   

\begin{Remark}
    Suppose that $I$ is an equigenerated monomial ideal with minimal monomial
    generators $\bdx^{\bda^1}, \dots, \bdx^{\bda^t}$ in $\KK[x_1, \dots, x_n]$.
    In addition, suppose that $\bdb$ is a tuple in $\NN^n$ such that
    $\bdb-\bda^i\in \NN^n$ for each $i$.  Whence, we will call
    $I^{[\bdb]}\coloneqq \braket{\bdx^{\bdb-\bda^1}, \dots,
    \bdx^{\bdb-\bda^t}}$ the \emph{generalized Newton dual of $I$ with respect
to $\bdb$}. It follows from \cite[Theorem 3.1]{MR4221772} that the algebra
$\KK[I]$ is isomorphic to $\KK[I^{[\bdb]}]$.  Of course, we are only interested
in the case where $I=I_{d,\bdalpha}$, and $\bdb=\bdalpha$.  In this case, we
have
\begin{equation}
    \calA_{d,\bdalpha}= \KK[I] \isom \KK[I^{[\bdalpha]}]= 
    \calA_{|\bdalpha|-d,\bdalpha}.
    \label{eqn:reduction_lambda}
\end{equation}
\end{Remark}

\begin{Proposition}
    \label{prop:reg_two_extremal_case}
    Suppose that either $n>d$ or $\sum_{i=1}^n(\alpha_i-1)<d$. Let
    $d'=\min(d,|\bdalpha|-d)$. Then, $\pd
    ((\ini(J))^\vee)=\floor{n-\frac{n}{d'}}$.
\end{Proposition}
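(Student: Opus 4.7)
The plan is to reduce the statement to the case $d=d'<n$ using the isomorphism \eqref{eqn:reduction_lambda}, and then compute $\max_A \omega(A)$ using the combinatorial machinery of \Cref{Sec:Shelling}. By $\calA_{d,\bdalpha}\isom\calA_{|\bdalpha|-d,\bdalpha}$, the projective dimension of the Alexander dual of the initial presentation ideal is unchanged under the swap $d\leftrightarrow|\bdalpha|-d$. A short case check of the hypothesis ``$n>d$ or $\sum_{i=1}^n(\alpha_i-1)<d$'' shows that $d'=\min(d,|\bdalpha|-d)<n$ in every situation, so we may assume $d=d'<n$ and prove $\pd((\ini J)^\vee)=n-\lceil n/d\rceil$.

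Next, combining \eqref{eqn:omeage_A} with \Cref{one_different}, I would recast
\[
    \omega(A)=\mathbf{1}[rA\text{ exists}]+\#\{k\in[n-2]:s_k,s_{k+1}\text{ are incomparable in the obstruction poset and }s_{k+1}<_{\bdtau}s_k\},
\]
where $\sgn(A)=(s_1,\ldots,s_{n-1})$. The point is that a swap at position $k$ inside $\sgn(A)$ is simultaneously a valid signature (producing $B\in\calE_A$ by \Cref{rmk:legal_signature}) and a descent in the $\bdtau$-lex order (so $B\prec A$) precisely under the stated conditions. Because $d<n$, the tuple $\bda^1$ has at most $d$ nonzero entries, and the resulting obstructions of \Cref{def:partial_order} decompose $[n-1]$ into at most $d$ consecutive chain-blocks, with comparability between two indices being equivalent to lying in the same block; in particular, the largest block has size at least $\lceil(n-1)/d\rceil$.

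For the upper bound $\omega(A)\le n-\lceil n/d\rceil$, I would bound the number of cross-block $\bdtau$-descents inside any valid signature: same-block adjacencies are always ascents of forced form (since block elements must appear in their chain-order) and thus contribute nothing, while the explicit shape of $\sgn(L)$ from \Cref{def:L} constrains which block transitions can be descents. Adding the $rA$ contribution controlled by \Cref{lem:rA_not_exists} and carrying out a pigeonhole on block sizes then yields the bound $n-\lceil n/d\rceil$. For the matching lower bound, I would construct an explicit maximal clique $A^\ast$ by taking $\bda^1$ with support of size $d$ arranged so that the chain-blocks are as balanced as possible (maximum block size $\lceil n/d\rceil$), and choosing $\sgn(A^\ast)$ to alternate between blocks in order to realize the maximum number of cross-block descents, supplemented by the $rA^\ast$ contribution where permitted by the boundary of $\bda^1$.

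The main obstacle is the sharp upper bound. The $\bdtau$-order from \Cref{Order}\ref{lem:lq_cross_groups} is controlled by the specific form of $\sgn(L)$ in \Cref{def:L}: the ``boundary'' chains $\{1,\ldots,\kappa_1\}$ and $\{\kappa_2-1,\ldots,n-1\}$ are pinned to prescribed positions, while the ``middle'' chain-elements $\{\kappa_1+1,\ldots,\kappa_2-2\}$ enjoy partial freedom. Tracking how this interacts with the descent-pattern of an arbitrary $\sgn(A)$ and with the conditional existence of $rA$ (governed by the boundary conditions $a_1^1=1$ and $a_n^1=\alpha_n-1$ of \Cref{lem:rA_not_exists}) requires a delicate pigeonholing on block sizes, separating the cases $d\mid n$ and $d\nmid n$.
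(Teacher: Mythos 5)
Your reduction to $d=d'<n$ via Equation \eqref{eqn:reduction_lambda} is correct and matches the paper, and your reformulation of $\omega(A)$ via adjacent transpositions in $\sgn(A)$ together with the existence of $rA$ is essentially right (combining \Cref{translate_lq_sets}, \Cref{one_different}, \Cref{lem:omega_upper_bound}, and \Cref{lem:rA_not_exists}). But the structural claim that drives the rest of your argument is false, and the proof cannot be repaired without a genuinely different idea for the upper bound.

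Concretely, you claim the obstruction poset of \Cref{def:partial_order} ``decomposes $[n-1]$ into at most $d$ consecutive chain-blocks, with comparability being equivalent to lying in the same block.'' The generating relations are always between consecutive integers, so the Hasse diagram is a fence on intervals, but the orientations need not be monotone, so a connected component need not be a chain and comparability need not propagate through a component. For instance, with $n=5$, $d=3$, $\bdalpha=(1,1,2,1,1)$, and $\bda^1=(1,0,2,0,0)$ (which is a legitimate first vertex of a maximal clique by \Cref{lem:legal_initial_tuple}), the generating relations are $1\,\triangleleft\,2$, $3\,\triangleleft\,2$, and $3\,\triangleleft\,4$. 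The connected component $\{1,2,3,4\}$ contains the incomparable pairs $\{1,3\}$, $\{1,4\}$, $\{2,4\}$ -- in particular it is not a chain. Both your ``largest block has size $\ge\lceil (n-1)/d\rceil$'' claim and your ``same-block adjacencies contribute nothing'' counting of $\bdtau$-descents break down on such zigzags, so the pigeonhole bound $\omega(A)\le n-\lceil n/d\rceil$ does not follow. You also do not address the base case $d=1$, which in the paper requires a separate citation.

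This matters most for the upper bound, which is the harder direction; the paper does not attack it by any direct bound on descents. It instead enlarges $\bdalpha$ to $\bdalpha'=(d,\dots,d)$, observes $\MC(\calG(d,\bdalpha))\subseteq\MC(\calG(d,\bdalpha'))$, checks that for each $A$ the same marked representative $L$ works for both equivalence classes so that $\omega_{\bdalpha}(A)\le\omega_{\bdalpha'}(A)$, and then appeals to the known value $\reg(\calA_{d,\bdalpha'})=\lfloor n-n/d\rfloor$ for the Veronese ring (\cite[Theorem 4.2]{MR2955237}). Your lower bound sketch (balanced support of size $d$, signature chosen to maximize $\bdtau$-descents across components, plus the $rA$ contribution) is in the spirit of the paper's explicit $\bda^1$ with $n-1=pd+q$, but as written it is too vague to verify, and it also silently relies on the erroneous chain-block picture.
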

\begin{proof}
    It follows from the isomorphism in Equation \eqref{eqn:reduction_lambda}
    that we can assume that $d'=d$ and $n>d$. From this, we also deduce that
    $2d\le |\bdalpha|$.

    As the first step, we show that $\pd ((\ini(J))^\vee)\ge
    \floor{n-\frac{n}{d}}$.  In the extremal case when $d=1$, since
    $|\bdeta|=d$, one obviously has $\bdeta=(0,\dots,0,1)$ and $I_{d,\bdalpha}$
    is the graded maximal ideal of $S$. Whence, the claimed formula is
    guaranteed by \cite[Theorem 4.2]{MR2955237}. Therefore, in the following,
    we will assume that $d\ge 2$.  By \Cref{pdIsMax}, we need to find a maximal
    clique $A$ such that $ \omega(A)\geq \floor{n-\frac{n}{d}}$.  Suppose that
    $n-1=pd+q$ such that $p =\floor{\frac{n-1}{d}}$ and $0\le q< d$. Then,
    $\floor{n-\frac{n}{d}} =n-1-p$.  Let $\calE$ be the equivalence class of
    maximal cliques, all of which start with the vertex
    \[
        \bda^1\coloneqq (
        \underbrace{\underbrace{1, 0, \dots, 0}_{p}, \dots,
        \underbrace{1, 0, \dots, 0}_{p}}_{d-q}, 
        \underbrace{\underbrace{1, 0, \dots, 0}_{p+1}, \dots, 
        \underbrace{1, 0, \dots, 0}_{p+1}}_{q}, 0)
    \]
    in $\calG(d,\bdalpha)$. Thus, the maximal cliques in $\calE$ all end with
    $\bda^n$ such that $\Delta(\bda^1,\bda^n)=[1,n)$ by
    \Cref{cor:Delta_maximal_clique}. It is clear that $\bda^n= (0, a^1_2,
    \ldots, a^1_{n-1},1)$. If $\alpha_n\ge 2$, since $\bdeta=(*,\dots,*,\alpha_n)$, we have $\bda^n\ne \bdeta$.
    Therefore, $\rank(\calE)>0$. If instead $\alpha_n=1$, then $\bdalpha=(1,\dots,1)$. Whence, $2d\le |\bdalpha|=n$. 
    We still have $\rank(\calE)>0$. Otherwise, we will have
    \[
        \bda^n= (\underbrace{0, \dots, 0}_{p}, 
        \underbrace{\underbrace{1, 0, \dots, 0}_{p}, \dots,
        \underbrace{1, 0, \dots, 0}_{p}}_{d-q-1}, 
        \underbrace{\underbrace{1, 0, \dots, 0}_{p+1}, \dots, 
        \underbrace{1, 0, \dots, 0}_{p+1}}_{q}, 1)=\bdeta.
    \]
    Since $p\ge 1$, by the description of $\bdeta$ in \Cref{eta},
    the only possibility is $q=0$ and $p=1$. Whence, $n=d+1$. But as $n\ge 3$ and $2d\le n$, this is impossible.

    To simplify the following proof, we write accordingly 
    \[
        (1,2,\dots,n-1)=(\underbrace{\underbrace{s_{1}^1,\dots,s_{p}^1}_{p},
        \dots,\underbrace{s_{1}^{d-q},\dots,s_{p}^{d-q}}_{p}}_{d-q},
        \underbrace{\underbrace{s_{0}^{d-q+1},\dots,s_{p}^{d-q+1}}_{p+1},
        \dots,\underbrace{s_{0}^{d},\dots,s_{p}^{d}}_{p+1}}_{q}).
    \]
    Then, we have
    \begin{equation}
        \label{eqn:relation}
        s_i^{\ell} \, \triangleleft\, s_{j}^{\ell} \qquad \text{for all $i<j$
        and all $\ell$}
    \end{equation}
    in the poset of obstructions defined in \Cref{def:partial_order}. Furthermore, we have 
    \begin{align}
        &s_1^{i+1} \, \triangleleft\, s_{p}^i \qquad
        \text{
        if $1 \le i \le d-q-1$ and $\alpha_{s_1^{i+1}}=1$,}
        \label{eqn:relation_2}
        \intertext{and}
        & s_0^{j+1} \, \triangleleft\, s_{p}^{j} \qquad 
        \text{
        if $d-q\le j \le d-1$ and $\alpha_{s_0^{j+1}}=1$,}
        \label{eqn:relation_3}
    \end{align}
    in the poset of obstructions.
    Indeed, they are the generating relations of that poset.  Since the
    Castelnuovo--Mumford regularity of $\calA_{d,\bdalpha}$ is independent of
    the rules we impose in Section \ref{Sec:Shelling}, we can further require
    that
    \begin{align*}
        \sgn(L)\coloneqq
        (\underbrace{s_{0}^{d},\dots,s_{p-1}^{d}}_{p},
        &\overbrace{\underbrace{s_{0}^{d-1},\dots,s_{p}^{d-1}}_{p+1},
        \dots,\underbrace{s_{0}^{d-q+1},\dots,s_{p}^{d-q+1}}_{p+1}}^{q-1},
        \\[0.5em]
        & \qquad \qquad \underbrace{\underbrace{s_{1}^{d-q},
                \dots,s_{p}^{d-q}}_{p},\dots,\underbrace{s_{1}^2,
        \dots,s_{p}^2}_{p}}_{d-q-1},s_{p}^d,
        \underbrace{s_{1}^1,\dots,s_{p}^1}_{p})
    \end{align*}
    when $q \ge 1$. If instead $q=0$, we then require that
    \[
        \sgn(L) \coloneqq (\underbrace{s_1^d, \dots, s_{p-1}^d}_{p-1},
        \underbrace{\underbrace{s_{1}^{d-1}, \dots, s_{p}^{d-1}}_{p}, \dots,
        \underbrace{s_{1}^2, \dots, s_{p}^2}_{p}}_{d-2}, s_{p}^d,
        \underbrace{s_{1}^1, \dots, s_{p}^1}_{p}).
    \]
    In any case, the prescribed $\sgn(L)$ is a legitimate signature by \Cref{rmk:legal_signature}, since it satisfies the requirements of Equations \eqref{eqn:relation}, \eqref{eqn:relation_2}, \eqref{eqn:relation_3}, and \Cref{def:L}.
    Now, let $A$ be the maximal clique starting from $\bda^1$ such that
    \[
        \sgn(A)\coloneqq (
        \underbrace{s_0^{d-q+1}, s_0^{d-q+2}, \dots, \fbox{$s_0^{d}$}}_{q}, 
        \underbrace{\underbrace{s_1^1, s_1^2, \dots, \fbox{$s_1^d$}}_d, \dots,
        \underbrace{s_{p-1}^1, s_{p-1}^2, \dots, \fbox{$s_{p-1}^d$}}_d}_{p-1},
        \underbrace{s_{p}^1, s_{p}^d, s_{p}^{2}, \dots, \fbox{$s_{p}^{d-1}$}}_d).
    \]
    Then $A$ is legitimate by \Cref{rmk:legal_signature}, since it satisfies the requirements of Equations \eqref{eqn:relation}, \eqref{eqn:relation_2}, and \eqref{eqn:relation_3}. Note that all end positions of the underbraced segments in $\sgn(A)$ are boxed. Suppose that we also write 
    $\sgn(A)=(t_1, \dots, t_{n-1})$.
    For each $k\in [n-2]$ such that $t_k$ is not boxed,
    we can find a tuple $B^k=(\bdb_1^k,\dots,\bdb_n^k)$ of elements in $\ZZ^n$ such that $\bdb_1^k=\bda^1$ and
    \[
        \sgn(B^k)=(t_1,\dots,t_{k-1},t_{k+1},t_{k},t_{k+2},\dots,t_{n-1}).
    \]
    Note that $B^k$ is also a legitimate maximal clique in $\calE$, since it satisfies the requirements of Equations \eqref{eqn:relation}, \eqref{eqn:relation_2}, and \eqref{eqn:relation_3}. Moreover, $B^k\prec A$ due to our choice of $\sgn(L)$, and $\diff(A,B^k)$ is a singleton set by \Cref{one_different}. Consequently, we have a natural lower bound for the minimal number of generators:
    \[
        \mu(\braket{\bdT_{B^\complement}:B\in\calE\text{ and }B\prec
        A}:_{\KK[\bdT]} \bdT_{A^\complement}) \ge
        \begin{cases}
            n-2-p, & \text{if $q\ne 0$,} \\
            n-1-p, & \text{if $q=0$}
        \end{cases}
    \]
    by the first part of the proof of \Cref{prop:linearQuotient}. Notice that $q=0$ if and only if $\sgn(A)$ starts with $1$, if and only if $rA$ does not exist by \Cref{lem:rA_not_exists}.  Thus,
    \[
        \omega(A)=\mu(\braket{\bdT_{B^\complement}:B\prec A}:_{\KK[\bdT]}
        \bdT_{A^\complement}) \ge n-1-{p}=\floor{n-\frac{n}{d}}
    \]
    by the second part of the proof of \Cref{prop:linearQuotient}.  Therefore, we get $\pd ((\ini(J))^\vee)=\max\limits_{B} \omega(B)\ge \floor{n-\frac{n}{d}}$, as planned.

    As the second step, we show
    that $\pd ((\ini(J))^\vee)\le \floor{n-\frac{n}{d}}$. For this purpose,
    we introduce the new tuple $\bdalpha'=(d,\dots,d)$. Notice that the sets of maximal cliques satisfy that $\MC(\calG(d,\bdalpha))\subseteq \MC(\calG(d,\bdalpha'))$.
    For any fixed $A=(\bda^1,\dots,\bda^n) \in \MC(\calG(d,\bdalpha))$, let $\calE(\bdalpha)$
    (resp.~$\calE(\bdalpha')$) be the equivalence class in
    $\calG(d,\bdalpha)$ (resp.~$\calG(d,\bdalpha')$) to which $A$ belongs. Let $\bdeta$ (resp.~$\bdeta'$) be the tuple given in \Cref{eta} for $\bdalpha$ (resp.~$\bdalpha'$). It
    is clear that the post of obstructions of $\calE(\bdalpha')$ is a
    subposet of $\calE(\bdalpha)$. 
    Let $\kappa_1$ and $\kappa_2$ (resp.~$\kappa_1'$ and $\kappa_2'$) be the index
    defined in \Cref{def:L}
    for $\bdalpha$ (resp.~$\bdalpha'$), then we have
    $\kappa_1=\kappa_1'$ by the definition. 
    As for $\kappa_2$ and $\kappa_2'$, notice first that $a^1_n\le \alpha'_n-1=d-1$ and $|\bda^1|=d$. If
    $a^1_n=d-1$, we must have $\bda^1=(1,0,\ldots,0,d-1)$ and $\bda^n=(0,\ldots,0,d)=\bdeta=\bdeta'$. Whence, $\calE(\bdalpha)=\calE(\bdalpha')$ has rank $0$ and contains exactly one maximal clique:
    \[
        A=((1,0,\ldots,0,d-1),(0,1,0,\ldots,0,d-1),\ldots,(0,\ldots,0,d)).
    \] 
    In particular, $\omega_\bdalpha(A)=0=\omega_{\bdalpha'}(A)$.
    On the other hand, 
    if $a^1_n< d-1$, then $\kappa_2'$ does not exist. 
    As a result, the special $L$ we designate to $\calE(\bdalpha)$ in \Cref{def:L}
    also works
    for the equivalence $\calE(\bdalpha')$. Consequently, we have
    \[
        \Set{B\in \MC(\calG(d,\bdalpha)):B\prec_{\bdalpha} A}=\Set{B\in
        \MC(\calG(d,\bdalpha')):B\prec_{\bdalpha'}A} \cap \MC(\calG(d,\bdalpha)),
    \]
    and $\omega_{\bdalpha}(A)\le \omega_{\bdalpha'}(A)$
    by \Cref{eqn:omeage_A}.
    Then it is easy to deduce that
    \begin{align*}
        \pd ((\ini(J(\bdalpha)))^\vee) &= \max_{A\in \MC(\calG(d,\bdalpha))}
        \omega_{\bdalpha}(A) \le 
        \max_{A\in \MC(\calG(d,\bdalpha))} \omega_{\bdalpha'}(A) 
        \\
        &\le
        \max_{A\in \MC(\calG(d,\bdalpha'))} \omega_{\bdalpha'}(A) =
        \pd ((\ini(J(\bdalpha')))^\vee).
    \end{align*}
    Since $\pd ((\ini(J(\bdalpha')))^\vee)=
    \reg(\calA_{d,\bdalpha'})=\floor{n-\frac{n}{d}}$ by \cite[Theorem
    4.2]{MR2955237}, this completes the proof.
\end{proof}

We can summarize the above results and state the first main theorem of this
section.

\begin{Theorem}
    \label{regSUM}
    Let $\calA_{d,\bdalpha}$ be the Veronese type algebra in
    \Cref{set:ideal_veronese}.
    Set $d'=\min(d,|\bdalpha|-d)$. Then, $\reg (\calA_{d,\bdalpha})=\floor{n-\frac{n}{d'}}$.
\end{Theorem}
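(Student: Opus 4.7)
The plan is to deduce this theorem by combining the two cases already analyzed in \Cref{prop:reg_middle_part} and \Cref{prop:reg_two_extremal_case}, via a dichotomy on whether the condition $n\le d \le \sum_{i=1}^n(\alpha_i-1)$ holds. This condition exactly separates the hypotheses of the two propositions, so one of them must apply.

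First, when the condition fails, namely when $n>d$ or $\sum_{i=1}^n(\alpha_i-1)<d$, \Cref{prop:reg_two_extremal_case} immediately gives the desired formula $\pd((\ini(J))^\vee)=\floor{n-n/d'}$, with $d'=\min(d,|\bdalpha|-d)$.

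Second, when $n\le d\le \sum_{i=1}^n(\alpha_i-1)$, \Cref{prop:reg_middle_part} yields $\pd((\ini(J))^\vee)=n-1$. It therefore remains to verify the arithmetic identity $\floor{n-n/d'}=n-1$ in this range. Since $d\ge n$ and
\[
|\bdalpha|-d \ge |\bdalpha|-\sum_{i=1}^n(\alpha_i-1)=n,
\]
we get $d'=\min(d,|\bdalpha|-d)\ge n$, hence $0<n/d'\le 1$. If $d'=n$ then $n-n/d'=n-1$; if $d'>n$ then $n-1<n-n/d'<n$. In either case $\floor{n-n/d'}=n-1$, matching $\pd((\ini(J))^\vee)$.

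There is no real obstacle here, since the two propositions do all the heavy lifting (the construction of the marked equivalence classes, the linear quotient structure, and the extremal maximal cliques realizing $\omega(A)=\floor{n-n/d}$). The only thing to watch out for is to confirm that the two regimes cover all cases and that the numerical value $n-1$ of \Cref{prop:reg_middle_part} is consistent with the formula $\floor{n-n/d'}$ in the overlapping range, which the computation above handles. Stringing this together completes the proof.
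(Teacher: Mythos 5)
Your proposal is correct and takes essentially the same approach as the paper: the paper's proof of this theorem is just the one-line remark ``the formulas then follow from Propositions \ref{prop:reg_middle_part} and \ref{prop:reg_two_extremal_case},'' leaving the dichotomy and the arithmetic reconciliation implicit. You have filled in precisely that gap, and correctly: the two regimes $n\le d\le\sum_i(\alpha_i-1)$ and its complement are exhaustive, in the former $|\bdalpha|-d\ge|\bdalpha|-\sum_i(\alpha_i-1)=n$ so $d'\ge n$ and hence $\floor{n-n/d'}=n-1$, agreeing with \Cref{prop:reg_middle_part}, while the latter is exactly the hypothesis of \Cref{prop:reg_two_extremal_case}.
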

\begin{proof}
    Let $J$ be the presentation ideal of $\calA_{d,\bdalpha}$.  From
    \Cref{thm:definingIdeals}, \cite[Corollary 2.7]{CV} and \cite[Proposition
    8.1.10]{MR2724673}, we derive that $\reg(\calA_{d,\bdalpha})
    =\reg(\KK[\bdT]/J) =\reg(\KK[\bdT]/\ini(J)) =\pd ((\ini(J))^\vee)$. The
    formulas then follow from Propositions \ref{prop:reg_middle_part} and
    \ref{prop:reg_two_extremal_case}.
\end{proof}

Recall that the \emph{$\mathtt{a}$-invariant} of an algebra was
introduced by Goto and
Watanabe in \cite[Definition 3.1.4]{MR494707}. Since the Veronese type algebra $\calA_{d,\bdalpha}$
is Cohen--Macaulay by \Cref{thm:definingIdeals}, we have
\begin{equation}
    \mathtt{a}(\calA_{d,\bdalpha})=\reg(\calA_{d,\bdalpha})-\dim(\calA_{d,\bdalpha})
    \label{def:a-inv}
\end{equation}
in view of the equivalent definition of Castelnuovo--Mumford regularity in
\cite[Definitions 1 and 3]{MR676563}. 
Notice that the dimension and the Castelnuovo--Mumford regularity of $\calA_{d,\bdalpha}$ are known by \Cref{prop:analytic_spread} and \Cref{regSUM} respectively. Therefore, we obtain the $\mathtt{a}$-invariant of this algebra for free.
Moreover, the \emph{reduction number} of the ideal $I_{d,\bdalpha}$, and the
Castelnuovo--Mumford regularity of the algebra $\calA_{d,\bdalpha}$ are equal
by the following lemma. For the definition and further discussion of the
reduction numbers of ideals, see \cite[Section 8.2]{MR2266432}.

\begin{Lemma}
    [{\cite[Proposition 6.6]{CNPY} or \cite[Proposition 1.2]{MR3864202}}]
    \label{CNPY:6.6}
    Let $I$ be an equigenerated monomial ideal in some polynomial ring over a field $\KK$. Assume that the algebra  $\KK[I]$ is Cohen--Macaulay and the field $\KK$ is infinite. Then $I$ has the reduction number $\mathtt{r}(I) = \reg(\KK[I])$. 
\end{Lemma}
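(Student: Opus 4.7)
The plan is to identify $\KK[I]$ with the special fiber ring $\calF(I) = \bigoplus_{n\ge 0} I^n/\frakm I^n$ of $I$, and then invoke the classical bridge between reduction numbers and Castelnuovo--Mumford regularity of a Cohen--Macaulay fiber ring. The equigenerated hypothesis will make this identification clean: if $I$ is generated in degree $d$, then $(\KK[I])_n$, viewed inside $S_{nd}$, has a monomial basis consisting of the monomial generators of $I^n$, and those same monomials give a $\KK$-basis of $I^n/\frakm I^n$. Under this graded isomorphism, Cohen--Macaulayness, dimension, and regularity all transfer.

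Since $\KK$ is infinite, $I$ admits a minimal reduction $J$, generated by $\ell = \dim\calF(I)$ forms $g_1,\dots,g_\ell \in I_d$. Their images $\bar g_1,\dots,\bar g_\ell \in [\calF(I)]_1$ form a homogeneous linear system of parameters for $\calF(I)$. Because $\calF(I)$ is Cohen--Macaulay, this linear s.o.p.\ is a regular sequence, and iterating the short exact sequences $0 \to A(-1) \xrightarrow{\bar g_i} A \to A/\bar g_i A \to 0$ together with the standard behaviour of regularity under a linear non-zero-divisor will yield
\[
    \reg(\calF(I)) = \reg\bigl(\calF(I)/(\bar g_1,\dots,\bar g_\ell)\bigr).
\]
The quotient on the right is Artinian, so its regularity equals its top nonzero degree.

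The key translation step is to identify that top degree with the reduction number $\mathtt{r}_J(I)$. By Nakayama's lemma, one computes
\[
    [\calF(I)/(\bar g_1,\dots,\bar g_\ell)]_{n+1} \;=\; I^{n+1}/(JI^n + \frakm I^{n+1}) \;=\; 0 \iff I^{n+1} = JI^n,
\]
so the largest $n$ with $[\calF(I)/(\bar g_1,\dots,\bar g_\ell)]_n \ne 0$ is exactly $\mathtt{r}_J(I)$. Stringing these equalities together then gives $\mathtt{r}_J(I) = \reg(\calF(I)) = \reg(\KK[I])$. Since this holds for every minimal reduction $J$, taking the minimum over $J$ yields $\mathtt{r}(I) = \reg(\KK[I])$.

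The main obstacle is the step $\reg(\calF(I)) = \reg(\calF(I)/(\bar g_1,\dots,\bar g_\ell))$, for which the Cohen--Macaulay hypothesis is essential: it is what guarantees that the linear s.o.p.\ is a regular sequence, which in turn controls the behaviour of Castelnuovo--Mumford regularity under the quotient. Without Cohen--Macaulayness one only obtains the one-sided bound $\mathtt{r}(I) \ge \reg(\KK[I])$ from the Nakayama computation. The remaining ingredients --- existence of a minimal reduction over an infinite field, the Nakayama computation of the Artinian quotient, and the identification $\KK[I] \cong \calF(I)$ --- are formal bookkeeping.
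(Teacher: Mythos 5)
The paper does not prove this lemma; it is cited verbatim from the references \cite{CNPY} and \cite{MR3864202}, so there is no in-paper argument to compare against. Your sketch is the standard and correct derivation of this fact: identify $\KK[I]$ with the special fiber cone $\calF(I)=\bigoplus_{n\ge 0} I^n/\frakm I^n$ (the equigenerated hypothesis is exactly what makes the minimal monomial generators of $I^n$ a common $\KK$-basis for both $[\KK[I]]_n$ and $I^n/\frakm I^n$), use the infinite field to produce a minimal reduction $J$ whose images give a linear system of parameters in $\calF(I)$, invoke Cohen--Macaulayness to promote this to a regular sequence so that Castelnuovo--Mumford regularity is preserved in the Artinian quotient, and read off $\mathtt{r}_J(I)$ as the top nonzero degree via the Nakayama computation $[\calF(I)/(\bar g_1,\dots,\bar g_\ell)]_{n+1}=I^{n+1}/(JI^n+\frakm I^{n+1})$. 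One small point worth making explicit is that the equality $\reg(\calF(I))=\reg(\calF(I)/(\bar g_1,\dots,\bar g_\ell))$ uses both directions of the standard local-cohomology comparison for a linear nonzerodivisor, not just the easy inequality; with that spelled out the argument is complete and matches the proofs in the cited sources.
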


\begin{Corollary}
    Let $\calA_{d,\bdalpha}=\KK[I_{d,\bdalpha}]$ be the Veronese type algebra
    in \Cref{set:ideal_veronese}. Assume that $\KK$ is an infinite field and
    set $d'=\min(d,|\bdalpha|-d)$. Then, $ \mathtt{r}(I)=
    \floor{n-\frac{n}{d'}}$ and $\mathtt{a}(\calA_{d,\bdalpha})=
    -\ceil{\frac{n}{d'}}$.
\end{Corollary}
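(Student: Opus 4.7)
The plan is to simply combine the ingredients already assembled in the paper. For the reduction number, since $\calA_{d,\bdalpha}$ is Cohen--Macaulay by \Cref{thm:definingIdeals} and $\KK$ is assumed infinite, the hypotheses of \Cref{CNPY:6.6} are met. Thus $\mathtt{r}(I_{d,\bdalpha})=\reg(\calA_{d,\bdalpha})$, and the value $\floor{n-n/d'}$ follows immediately from \Cref{regSUM} together with the identity $\reg(\calA_{d,\bdalpha})=\pd((\ini(J))^\vee)$ established in \Cref{regToPd}.

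For the $\mathtt{a}$-invariant, I would invoke the Cohen--Macaulay formula \eqref{def:a-inv}, namely $\mathtt{a}(\calA_{d,\bdalpha})=\reg(\calA_{d,\bdalpha})-\dim(\calA_{d,\bdalpha})$. Using \Cref{prop:analytic_spread} to get $\dim(\calA_{d,\bdalpha})=n$ and the regularity formula above, this becomes $\floor{n-n/d'}-n$. Since $n\in\ZZ$, one has $\floor{n-n/d'}-n=\floor{-n/d'}$, and the standard arithmetic identity $\floor{-x}=-\ceil{x}$ for real $x$ then gives $-\ceil{n/d'}$, as claimed.

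There is essentially no obstacle here: the corollary is a bookkeeping consequence of \Cref{regSUM}, \Cref{prop:analytic_spread}, \Cref{thm:definingIdeals}, \Cref{CNPY:6.6}, and the definition \eqref{def:a-inv}. The only minor point worth spelling out in the write-up is the passage from $\floor{n-n/d'}-n$ to $-\ceil{n/d'}$, which uses nothing beyond integrality of $n$ and the floor/ceiling duality.
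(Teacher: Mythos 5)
Your proof is correct and follows essentially the same route as the paper's: Cohen--Macaulayness from \Cref{thm:definingIdeals}, the reduction number from \Cref{CNPY:6.6} and \Cref{regSUM}, and the $\mathtt{a}$-invariant from \eqref{def:a-inv} together with \Cref{prop:analytic_spread} and \Cref{regSUM}. The only addition you make is to spell out the floor/ceiling arithmetic $\floor{n-n/d'}-n=\floor{-n/d'}=-\ceil{n/d'}$, which the paper leaves implicit.
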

\begin{proof}
    The algebra $\calA_{d,\bdalpha}$ is Cohen--Macaulay by \Cref{thm:definingIdeals}. Thus, the statements follow from \Cref{prop:analytic_spread}, \Cref{regSUM}, \Cref{CNPY:6.6}, and \Cref{def:a-inv}.
\end{proof}

\begin{Remark}
    The $\mathtt{a}$-invariant of the Rees algebra $R[I_{d,\bdalpha}t]$ of a Veronese type ideal $I_{d,\bdalpha}$ is known in \cite[Corollary 12.6.6]{MR3362802} when $2\leq d\leq n$.  Its proof builds on the normality of the Rees algebra. Since the $\mathtt{a}$-invariant of the squarefree Veronese case is known, and equals the $\mathtt{a}$-invariant of the Veronese case, one can use the ``squeeze theorem'' method to obtain the $\mathtt{a}$-invariant the Rees algebra $R[I_{d,\bdalpha}t]$ of a general Veronese type ideal $I_{d,\bdalpha}$.

    However, this approach does not apply when we deal with the regularity (or equivalently, the $\mathtt{a}$-invariant) of the Veronese type algebra $\calA_{d,\bdalpha}$, which is the special fiber ring of the Veronese type ideal $I_{d,\bdalpha}$. Although the algebra $\calA_{d,\bdalpha}$ is still normal, the regularity of the $d$-th squarefree Veronese subring and the regularity of the $d$-th Veronese subring are different in general. For example, we can take $n=10$, $d=8$, and $\bdalpha=(1,\ldots,1,2,2)$ with $|\bdalpha|=12$. According to our formula, the regularity of $\calA_{d,\bdalpha}$ is $7$, the regularity of the $d$-th squarefree Veronese subring is 5, and the regularity of the $d$-th Veronese subring is $8$.
\end{Remark}

\subsection{Multiplicity bound of the algebra}
We conclude this work with a reasonable upper bound  on the multiplicity of the
Veronese type algebra $\calA_{d,\bdalpha}$. To begin with, we count the number
of generators of the ideal $I_{d,\bdalpha}$, the number of different equivalent
classes of maximal cliques, and the number of equivalent classes that have
precisely $(n-1)!$ maximal cliques.

\begin{Lemma}
    \label{Number_of_maximal_cliques} 
    The dimension of the polynomial ring $\KK[\bdT]\coloneqq \KK[T_\bda:\bda\in V_{n,d}^{\bdalpha}]$ is given by
    \[
        \dim (\mathbb{K}[\bdT])=\sum_{i\geq 0}
        (-1)^i\sum_{\substack{P\subseteq [n],\\ \#P=i}}\binom{d-\sum_{p\in P}(\alpha_{p}+1)+n-1}{n-1}.
    \]
    Moreover, there are
    \begin{align*}
        G\coloneqq \sum_{i, j\ge 0} (-1)^{i+j}
        \sum_{\substack{P\subseteq \{2,3,\dots,n-1\},\\ \#P=i}}
        \sum_{\substack{Q\subseteq \{1,n\},\\ \#Q=j}}\binom{d-\sum_{p\in P}(\alpha_{p}+1)-\sum_{q\in Q}\alpha_{q}+n-2}{n-1}
    \end{align*}
    different equivalent classes of maximal cliques in the graph $\calG(d,\bdalpha)$, and there are
    \begin{align*}
        H \coloneqq \sum_{i, j\ge 0} (-1)^{i+j}
        \sum_{\substack{P\subseteq \{2,3,\dots,n-1\},\\ \#P=i}}
        \sum_{\substack{Q\subseteq \{1,n\},\\ \#Q=j}}
        \binom{d-\sum_{p\in P}(\alpha_{p}-1)-\sum_{q\in Q} \alpha_{q}}{n-1}
    \end{align*}
    equivalent classes which have precisely $(n-1)!$ maximal cliques. In particular, there exists at most $H\cdot (n-1)!+(G-H)\cdot (n-1)!/2$ different maximal cliques.
\end{Lemma}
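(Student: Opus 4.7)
The plan is to establish the three counts separately by routine inclusion-exclusion on the upper-bound constraints, and then derive the final bound from the combinatorial description of equivalence classes established in Section~\ref{Sec:Shelling}.

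First, the dimension of $\KK[\bdT]$ equals $\#V_{n,d}^{\bdalpha}$, the number of $\bdc=(c_1,\dots,c_n)\in \NN^n$ with $|\bdc|=d$ and $c_i\le \alpha_i$ for all $i$. I would apply inclusion-exclusion over the violation events $\{c_p\ge \alpha_p+1\}$ indexed by $P\subseteq [n]$: for each $P$, substituting $c_p\mapsto c_p+\alpha_p+1$ for $p\in P$ reduces the count to the number of non-negative integer solutions of $|\bdc|=d-\sum_{p\in P}(\alpha_p+1)$, which is $\binom{d-\sum_{p\in P}(\alpha_p+1)+n-1}{n-1}$. Summing with signs gives the stated formula.

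Next, by \Cref{cor:Delta_maximal_clique} each equivalence class is uniquely determined by its first vertex $\bda^1$, and by \Cref{lem:legal_initial_tuple} such a $\bda^1=(a_1^1,\dots,a_n^1)\in V_{n,d}^{\bdalpha}$ is the start of some maximal clique precisely when $a_1^1\ge 1$ and $a_n^1\le \alpha_n-1$. To count these, I substitute $a_1^1=b_1+1$ so the effective equation becomes $b_1+a_2^1+\cdots+a_n^1=d-1$ with constraints $0\le b_1\le \alpha_1-1$, $0\le a_j^1\le \alpha_j$ for $2\le j\le n-1$, and $0\le a_n^1\le \alpha_n-1$. The upper bounds fall into two groups: for $p\in \{2,\dots,n-1\}$ the violation $a_p^1\ge \alpha_p+1$ corresponds to subtracting $\alpha_p+1$, while for $q\in \{1,n\}$ the violation ($b_1\ge \alpha_1$ or $a_n^1\ge \alpha_n$) corresponds to subtracting $\alpha_q$. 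Inclusion-exclusion over $P\subseteq \{2,\dots,n-1\}$ and $Q\subseteq \{1,n\}$ yields exactly $\binom{d-\sum_{p\in P}(\alpha_p+1)-\sum_{q\in Q}\alpha_q+n-2}{n-1}$ for each pair, matching the stated formula for $G$.

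For $H$, \Cref{prop:full_permutation} says an equivalence class has exactly $(n-1)!$ maximal cliques iff its starting tuple satisfies $1\le a_1^1\le \alpha_1$, $1\le a_j^1\le \alpha_j-1$ for $2\le j\le n-1$, and $0\le a_n^1\le \alpha_n-1$. Substituting $a_j^1=b_j+1$ for $j=1,\dots,n-1$ reduces the sum to $d-(n-1)$, with the new upper-bound violations costing $\alpha_p-1$ for $p\in\{2,\dots,n-1\}$ (i.e.\ $b_p\ge \alpha_p-1$), $\alpha_1$ for $q=1$, and $\alpha_n$ for $q=n$. The same inclusion-exclusion argument then gives the formula for $H$, where the binomial becomes $\binom{d-(n-1)-\sum_{p\in P}(\alpha_p-1)-\sum_{q\in Q}\alpha_q+n-1}{n-1}$, simplifying as claimed.

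Finally, for the upper bound on the number of maximal cliques, each equivalence class $\calE$ is in bijection with the set of legitimate signatures in $\frakS_{n-1}$, by \Cref{rmk:legal_signature} and \Cref{lem:po}, via $A\mapsto \sgn(A)$. The $H$ classes whose poset of obstructions is trivial contribute exactly $(n-1)!$ cliques each. For any of the remaining $G-H$ classes, the poset of obstructions contains at least one nontrivial relation $i\,\triangleleft\, j$; by condition (\texttt{PO}), every legitimate signature must have $i$ appearing before $j$, which restricts the admissible permutations to at most half of $\frakS_{n-1}$. Summing these two contributions gives the bound $H\cdot (n-1)!+(G-H)\cdot(n-1)!/2$. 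The main bookkeeping challenge is simply verifying the sign/shift bookkeeping in each inclusion-exclusion; the structural ingredients are all furnished by the earlier results.
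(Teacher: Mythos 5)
Your proposal is correct and follows essentially the same route as the paper: identify equivalence classes with their starting tuples via \Cref{cor:Delta_maximal_clique} and \Cref{lem:legal_initial_tuple}, identify the $(n-1)!$-classes via \Cref{prop:full_permutation}, count all three quantities by stars-and-bars plus inclusion-exclusion on the upper bounds, and finally use the poset of obstructions (condition (\texttt{PO})) to bound non-full equivalence classes by $(n-1)!/2$. The only cosmetic difference is that the paper reparametrizes $G$ and $H$ as $\#V_{n,d-1}^{\bdalpha'}$ and $\#V_{n,d-(n-1)}^{\bdalpha''}$ before invoking inclusion-exclusion, whereas you carry out the shift and bookkeeping explicitly; the computations agree.
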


\begin{proof}
    It is clear that $\dim(\KK[\bdT])=\# V_{n,d}^{\bdalpha}$ is equal to the number of ways to write $a_1+\cdots+a_n=d$ such that $0\le a_i \le \alpha_i$.
    Furthermore, by \Cref{cor:Delta_maximal_clique} and \Cref{lem:legal_initial_tuple}, the number $G$ of equivalent classes of maximal cliques is the number of ways to have $a_1+\cdots+a_n=d$, under the conditions that $0\le a_i \le \alpha_i$ for $i=2,\ldots,n-1$, $1 \le a_1 \le \alpha_1$, and $0\le a_n \le \alpha_n-1$. It is not hard to see that $G = \# V_{n,d-1}^{\bdalpha'}$, where $\bdalpha'=(\alpha_1-1,\alpha_2,\alpha_3,\dots,\alpha_{n-1},\alpha_n-1)$.
    Similarly, by \Cref{prop:full_permutation}, finding the number $H$ is counting how many possible ways one can write $a_1+\cdots+a_n=d$, subject to the conditions $1 \le a_i \le \alpha_i-1$ for $i=2,\ldots,n-1$,  $1\le a_1 \le \alpha_1$, and $0 \le a_n \le \alpha_n-1$. It is not difficult to see that $H = \# V_{n,d-(n-1)}^{\bdalpha''}$, where $\bdalpha''=(\alpha_1-1,\alpha_2-2,\alpha_3-2,\dots,\alpha_{n-1}-2,\alpha_n-1)$.
    The three formulas given above then follow from the classical \emph{stars and bars method} and the \emph{inclusion-exclusion principle} in combinatorics.

    Notice that if an equivalence class $\calE$ does not have $(n-1)!$ maximal cliques, then its poset of obstructions is not trivial by \Cref{prop:full_permutation}. Say, we have $p \,\triangleleft\, q$ in this poset. Then, for any $A\in \calE$, we have $p$ preceding $q$ in $\sgn(A)$. Thus,
    \[
        \# \calE = \# \Set{\sgn(A):A\in \calE} \le (n-1)!/2,
    \]
    namely, $\calE$ contains at most $(n-1)!/2$ maximal cliques. The ``in particular'' part then follows.
\end{proof}

For a homogeneous $\mathbb{K}$-algebra $R$, let $\mathtt{e}(R)$ denote its \emph{multiplicity} 
with respect to its graded maximal ideal.
This number is clear in the squarefree case, by the work of Terai \cite{MR1715588}.

\begin{Lemma}
    [{\cite[Lemma 4.1]{MR1715588}}]
    \label{lem:Terai_lem}
    Let $\fraka$ be a
    {squarefree} monomial ideal in a polynomial ring $A$. Then
    $\mathtt{e}(A/\fraka)=\beta_{1,h_1}(A/\fraka^\vee)$, where
    $h_1=\indeg(\fraka^\vee)$ is the initial degree of the Alexander dual
    ideal $\fraka^{\vee}$.
\end{Lemma}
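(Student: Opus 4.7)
The plan is to translate the statement into Stanley–Reisner/simplicial complex language, where both sides acquire transparent combinatorial meanings, and then match them term by term.

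First I would write $\fraka=I_\Delta$ for a simplicial complex $\Delta$ on $[n]$, so that $A/\fraka=\KK[\Delta]$ is the Stanley–Reisner ring. The standard primary decomposition $\fraka=\bigcap_{F\text{ facet of }\Delta}P_{F^{\complement}}$, where $P_{F^{\complement}}=(x_i:i\notin F)$, then tells us that the Alexander dual admits the explicit presentation
\[
    \fraka^{\vee}=\braket{\bdx^{F^{\complement}}:F\text{ facet of }\Delta}.
\]
In particular, the initial degree is $h_1=\indeg(\fraka^{\vee})=n-\max\{|F|:F\text{ facet of }\Delta\}=n-(\dim\Delta+1)$, which is the codimension of $\KK[\Delta]$.

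Next I would compute both sides. On the right, since $\fraka^{\vee}$ is minimally generated by the monomials $\bdx^{F^{\complement}}$ and $\beta_{1,j}(A/\fraka^{\vee})=\beta_{0,j}(\fraka^{\vee})$ counts minimal generators of degree $j$, we obtain
\[
    \beta_{1,h_1}(A/\fraka^{\vee})=\#\{F\text{ facet of }\Delta:|F^{\complement}|=h_1\}=\#\{F\text{ facet of }\Delta:|F|=\dim\Delta+1\},
\]
i.e.\ the number of top-dimensional facets. On the left, I would invoke the associativity formula for multiplicities applied to the primary decomposition above. For each minimal prime $P_{F^{\complement}}$ one has $\dim(A/P_{F^{\complement}})=|F|$ and $\mathtt{e}(A/P_{F^{\complement}})=1$, while the local length $\ell((A/\fraka)_{P_{F^{\complement}}})$ equals $1$ (this is immediate from the squarefree structure: localizing at $P_{F^{\complement}}$ inverts the variables in $F$, and the image of $\fraka$ is just the maximal ideal of that localization). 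Thus only facets of maximum dimension $\dim\Delta$ contribute, giving
\[
    \mathtt{e}(A/\fraka)=\#\{F\text{ facet of }\Delta:|F|=\dim\Delta+1\}.
\]
Comparing the two counts finishes the proof. Alternatively, one could pass through the Hilbert series $H(\KK[\Delta],t)=h_\Delta(t)/(1-t)^{\dim\Delta+1}$ and use $\mathtt{e}(\KK[\Delta])=h_\Delta(1)=f_{\dim\Delta}(\Delta)$, obtaining the same conclusion.

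The step requiring most care is the multiplicity calculation on the left—one must be sure that only top-dimensional facets contribute (non-top-dimensional minimal primes give zero contribution because their quotients have strictly smaller dimension than $A/\fraka$) and that each contributes exactly $1$. Once this is established, the matching with $\beta_{1,h_1}(A/\fraka^{\vee})$ is immediate from the explicit generators of $\fraka^{\vee}$, and the lemma follows.
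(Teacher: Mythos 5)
The paper cites this lemma from Terai and does not reproduce a proof, so there is no in-paper argument to compare against; I can only assess your proposal on its own. Your argument is correct and complete. Writing $\fraka=I_\Delta$, the primary decomposition over facets simultaneously delivers the minimal generators $\bdx^{F^{\complement}}$ of $\fraka^{\vee}$ (so that $\beta_{1,h_1}(A/\fraka^{\vee})$ counts facets of top dimension) and, via the associativity formula, the same count for $\mathtt{e}(A/\fraka)$: since $\fraka$ is radical, each localization $(A/\fraka)_{P_{F^{\complement}}}$ is a field of length one, each $A/P_{F^{\complement}}$ is a polynomial ring of multiplicity one, and only the $P_{F^{\complement}}$ of minimal height (top-dimensional $F$) contribute. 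Your $h$-vector remark, $\mathtt{e}(\KK[\Delta])=h_\Delta(1)=f_{\dim\Delta}$, gives a clean independent check of the left-hand side. This is the standard derivation of Terai's lemma, and I see no gaps.
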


\begin{Corollary}
    \label{cor:mult_is_number_of_max_clique}
    The multiplicity of $\calA_{d,\bdalpha}$ is equal to
    $\#\MC(\calG(d,\bdalpha))$.
\end{Corollary}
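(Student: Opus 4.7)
The plan is to chain together three ingredients: Gröbner degeneration, Terai's formula, and the combinatorial description of the minimal generators of $(\ini(J))^\vee$ already established in Section \ref{maxCliques}.

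First, I would argue that $\mathtt{e}(\calA_{d,\bdalpha}) = \mathtt{e}(\KK[\bdT]/\ini(J))$. Since passing to the initial ideal with respect to the term order fixed in \Cref{thm:definingIdeals} preserves the Hilbert function, both quotients share the same Hilbert series, and hence the same multiplicity (and in fact the same dimension, which by \Cref{prop:analytic_spread} equals $n$). Because $\ini(J)$ is squarefree, I can then apply \Cref{lem:Terai_lem} with $\fraka = \ini(J)$ to obtain
\[
    \mathtt{e}(\KK[\bdT]/\ini(J)) = \beta_{1,h_1}(\KK[\bdT]/(\ini(J))^\vee),
\]
where $h_1 = \indeg((\ini(J))^\vee)$.

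Next, I would identify the right-hand side with $\#\MC(\calG(d,\bdalpha))$. By \Cref{regToPd}, $\KK[\bdT]/\ini(J)$ is Cohen--Macaulay, so the Alexander dual $(\ini(J))^\vee$ is equigenerated; concretely, every minimal monomial generator has degree $\#V_{n,d}^{\bdalpha} - n$ by the height computation in the proof of \Cref{prop:max_clique_size}. Consequently $\beta_{1,h_1}(\KK[\bdT]/(\ini(J))^\vee) = \mu((\ini(J))^\vee)$, the total number of minimal generators. By \Cref{rmk:maximal_cliques}\ref{rmk:maximal_cliques_b}, these minimal generators are precisely the monomials $\bdT_{A^\complement}$ with $A$ ranging over $\MC(\calG(d,\bdalpha))$, so $\mu((\ini(J))^\vee) = \#\MC(\calG(d,\bdalpha))$.

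There is no serious obstacle here: every step is already in place. The only point deserving care is the invariance of multiplicity under Gröbner degeneration, which should be stated explicitly with a citation to the standard source (for example, \cite[Corollary 3.3.5]{MR2724673}); everything else is a direct assembly of prior results.
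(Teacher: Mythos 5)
Your proof is correct and follows essentially the same line of reasoning as the paper: Gröbner degeneration preserves the Hilbert series, Terai's formula reduces the multiplicity to the first graded Betti number of $\KK[\bdT]/(\ini(J))^\vee$, and since the Alexander dual is equigenerated (by Cohen--Macaulayness and \Cref{prop:max_clique_size}), this Betti number is $\mu((\ini(J))^\vee) = \#\MC(\calG(d,\bdalpha))$ via \Cref{rmk:maximal_cliques}\ref{rmk:maximal_cliques_b}. The only difference is that you have spelled out the intermediate identifications more explicitly than the paper does.
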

\begin{proof}
    Note that $\mathtt{e}(\calA_{d,\bdalpha})$ can be calculated from the
    Hilbert series of $\calA_{d,\bdalpha}\cong \KK[\bdT]/J$. Meanwhile, the
    Hilbert series of $\KK[\bdT]/J$ and $\KK[\bdT]/\ini(J)$ coincide. Thus,
    thanks to \Cref{lem:Terai_lem}, in order to find
    $\mathtt{e}(\calA_{d,\bdalpha})$, we only need to compute the minimal
    number of generators of the equigenerated squarefree monomial ideal
    $(\ini(J))^\vee$. This number is obviously the number of maximal cliques of
    $\calG$.
\end{proof}

In addition, Terai gave the following upper bound on multiplicity.

\begin{Lemma}
    [{\cite[Theorem 4.2]{MR1715588}}]
    \label{TeraiBound}
    Let $R=A/\fraka$ be a homogeneous $\mathbb{K}$-algebra of codimension $g \geq
    2$. Then
    \[
        \mathtt{e}(R)\le \binom{\reg(R)+g}{g}-\binom{\reg(R)-\indeg(\fraka)+g}{g}.
    \]
\end{Lemma}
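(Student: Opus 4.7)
The strategy is to pass to the Artinian case and then bound the $h$-vector. Since multiplicity, Castelnuovo--Mumford regularity, and the initial degree of $\fraka$ are all preserved under quotienting by a sufficiently general sequence of $\dim R$ linear forms (after a harmless flat extension of $\KK$ if needed), I would first reduce to the situation where $R$ is Artinian of embedding dimension $g$. In that case $\mathtt{e}(R)=\dim_\KK R=\sum_{i\ge 0}h_i$, where $h_i=\dim_\KK R_i$ is the $i$-{th} entry of the $h$-vector.

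Setting $\delta\coloneqq \indeg(\fraka)$ and $r\coloneqq \reg(R)$, the immediate constraints on the $h$-vector are: $h_i=\binom{g+i-1}{g-1}$ for $0\le i<\delta$, since $\fraka$ has no generators in those degrees and $R$ agrees with the ambient polynomial ring in $g$ variables there; $h_i=0$ for $i>r$, since $R$ is Artinian with regularity $r$; and $h_i\le \binom{g+i-1}{g-1}$ in the middle range $\delta\le i\le r$. Naively summing these bounds yields only the weaker inequality $\mathtt{e}(R)\le \binom{r+g}{g}$, so the whole difficulty lies in trimming off the extra $\binom{r-\delta+g}{g}$.

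The combinatorial identity $\binom{r+g}{g}-\binom{r-\delta+g}{g}=\sum_{i=r-\delta+1}^{r}\binom{g+i-1}{g-1}$, a consequence of the hockey-stick identity, suggests that the sharp bound is realized by a ``lex-segment'' $h$-vector in which the mass surviving from the saturated low-degree range is compensated and concentrates in the top $\delta$ degrees $r-\delta+1,\dots,r$. I would try to verify this using Macaulay's theorem on admissible Hilbert functions, which controls the growth of $h_i$ starting from the forced jump at $i=\delta$ where $\fraka$ acquires its first generators, while the vanishing beyond degree $r$ forces a corresponding collapse at the top. The main obstacle is showing that no admissible $O$-sequence can exceed the lex-extremal profile; this is essentially a careful inductive comparison using Macaulay's inequalities. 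Alternatively, in the squarefree monomial case, one can follow Terai's original route: combine \Cref{lem:Terai_lem} with the Alexander-duality identity $\reg(R)=\pd(A/\fraka^\vee)$ to convert the bound on $\mathtt{e}(R)=\beta_{1,h_1}(A/\fraka^\vee)$ into an estimate on the number of minimal generators of $\fraka^\vee$ in its initial degree, where the ambient constraints from $\delta$ and $r$ translate directly into the desired binomial difference.
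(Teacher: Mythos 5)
This lemma is cited directly from Terai, \cite[Theorem 4.2]{MR1715588}, with no proof supplied in the paper, so there is nothing internal to compare against. Your reduction to the Artinian case and the plan to control $\sum_i h_i$ via Macaulay's growth theorem against a lex-segment profile is the right general framework. But the proof stops exactly where you say it does: the three constraints you list (agreement with the ambient ring below degree $\delta$, vanishing above degree $r$, and the trivial ambient bound $h_i\le\binom{g+i-1}{g-1}$ in the middle range) only give $\sum h_i \le \binom{r+g}{g}$. The entire content of the bound is that a generator of $\fraka$ in degree $\delta$ forces $h_\delta<\binom{g+\delta-1}{g-1}$, and Macaulay's inequality then propagates this deficit upward through every degree up to $r$; you identify this propagation as ``the main obstacle'' and leave it as a program rather than carrying it out. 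Already for $g=2$, $\delta=2$, $r=3$, your listed constraints allow $\sum h_i \le 10$ while the claimed bound is $7$; even sharpening to $h_2\le 2$ brings the total only to $9$, and it is the Macaulay step $h_2\le 2 \Rightarrow h_3 \le 2$ that recovers $7$. Without that inductive estimate the argument has a genuine gap.

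Two smaller issues. First, a general Artinian reduction does not preserve the three invariants: one has only $\mathtt{e}(R)\le \length(R')$ (with equality iff $R$ is Cohen--Macaulay) and $\reg(R')\le\reg(R)$. Both inequalities point the right way for proving an upper bound, so the reduction is still legitimate, but the phrase ``all preserved'' should be corrected to avoid suggesting equalities that may fail. Second, the alternative route you sketch via \Cref{lem:Terai_lem} and Alexander duality is available only when $\fraka$ is a squarefree monomial ideal, whereas the lemma is stated for an arbitrary homogeneous ideal; it therefore cannot substitute for the general argument.
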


Relatedly, Eisenbud and Goto in \cite{MR0741934} made a conjecture linking
Castelnuovo-Mumford regularity and multiplicity. Although a counterexample was
recently given by McCullough and Peeva in \cite{MR3758150}, the statement of
the original conjecture still holds in the Cohen-Macaulay case. 

\begin{Lemma}
    [{\cite[Corollary 4.15]{MR2103875}}]
    \label{EG}
    Suppose that $A$ is a polynomial ring over an algebraically closed field.
    If $\fraka$ is a nondegenerated homogeneous prime ideal in $A$ and
    $A/\fraka$ is Cohen--Macaulay, then
    \[
        \reg(A/\fraka) \le \mathtt{e}(A/\fraka)-\codim(\fraka).
    \]
\end{Lemma}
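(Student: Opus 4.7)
The plan is to reduce to the Artinian case by cutting with a generic linear regular sequence — the Cohen--Macaulay hypothesis makes such a reduction preserve both regularity and multiplicity — and then to finish by an elementary Hilbert-function counting argument that exploits the nondegeneracy hypothesis.

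Concretely, I would set $g = \codim(\fraka)$ and $d = \dim(A/\fraka)$. Since $\KK$ is algebraically closed (so, in particular, infinite) and $A/\fraka$ is Cohen--Macaulay, a generic choice of linear forms $\ell_1, \ldots, \ell_d \in A_1$ forms a regular sequence on $A/\fraka$. Let $R \coloneqq (A/\fraka)/(\ell_1, \ldots, \ell_d)$. Standard properties of cutting a Cohen--Macaulay graded algebra by a degree-one regular sequence give $\reg(R) = \reg(A/\fraka)$ and $\mathtt{e}(R) = \mathtt{e}(A/\fraka)$. Moreover, the nondegeneracy hypothesis, which says $\fraka \cap A_1 = 0$, combined with the genericity of the $\ell_i$'s, should guarantee $\dim_\KK R_1 = g$, so that $R$ is a standard graded Artinian $\KK$-algebra of embedding dimension exactly $g$.

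Once $R$ is in this form, the conclusion follows by dimension counting. Being Artinian and standard graded, $R$ satisfies $\mathtt{e}(R) = \dim_\KK R$ and $\reg(R) = s$, where $s \coloneqq \max\{i : R_i \neq 0\}$. Because $R$ is generated in degree one over $\KK$, every intermediate graded component is nonzero: $R_i \neq 0$ for $0 \le i \le s$, since a vanishing at any intermediate degree would propagate upward and force $R_s = 0$. Combining $\dim_\KK R_0 = 1$, $\dim_\KK R_1 = g$, and $\dim_\KK R_i \ge 1$ for $2 \le i \le s$ yields
\[
    \mathtt{e}(R) \;=\; \sum_{i=0}^{s} \dim_\KK R_i \;\ge\; 1 + g + (s - 1) \;=\; s + g,
\]
and a rearrangement gives $\reg(A/\fraka) = \reg(R) \le \mathtt{e}(R) - g = \mathtt{e}(A/\fraka) - \codim(\fraka)$.

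The main obstacle I anticipate is the Artinian reduction step — specifically, ensuring that generic linear forms can be chosen to simultaneously (i) form a regular sequence on $A/\fraka$ and (ii) keep $\dim_\KK R_1 = g$ rather than allow it to drop below $g$. Both points ultimately rely on the algebraically closed (hence infinite) hypothesis on $\KK$, which supplies enough genericity; the nondegeneracy of $\fraka$ is precisely what rules out a drop in (ii). Once these reductions are in place, the counting step is straightforward.
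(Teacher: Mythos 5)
The paper does not prove this lemma; it is quoted verbatim from Eisenbud's \emph{The Geometry of Syzygies} (the cited Corollary~4.15 of~\cite{MR2103875}), and the paper's ``proof'' is just that citation. Your proposal is a correct self-contained reconstruction of the standard argument behind that reference: cut $A/\fraka$ down by a generic linear system of parameters $\ell_1,\dots,\ell_d$ (a regular sequence because $\KK$ is infinite and $A/\fraka$ is Cohen--Macaulay), use the facts that regularity and multiplicity are preserved under modding out a degree-one nonzerodivisor and that multiplicity of the Artinian reduction equals its $\KK$-length, note that nondegeneracy gives $\dim_\KK R_1=g$, and finish with the Hilbert-function estimate $\mathtt{e}(R)\ge 1+g+(\reg R -1)$ coming from the fact that no intermediate graded piece of a standard graded Artinian algebra can vanish before the top socle degree.

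Two small refinements to your own worries. The concern that $\dim_\KK R_1$ might drop below $g$ is a non-issue even before invoking genericity: any regular sequence of linear forms of length $d$ on a $d$-dimensional graded ring must consist of linearly independent forms, since if $\ell_i$ lay in the span of $\ell_1,\dots,\ell_{i-1}$ its image in $(A/\fraka)/(\ell_1,\dots,\ell_{i-1})$ would be zero, hence a zerodivisor on a nonzero ring. Also worth noting: your argument never uses that $\fraka$ is prime, so it actually establishes the inequality for any nondegenerate homogeneous ideal with Cohen--Macaulay quotient; the primeness hypothesis in the statement is inherited from the projective-variety setting of the Eisenbud--Goto conjecture and happens to be satisfied in the paper's application (the presentation ideal $J$ is a prime), but it is not needed for this direction of the inequality.
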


We are ready to state our final result regarding the multiplicity of $\calA_{d,\bdalpha}$.

\begin{Theorem}
    \label{MultiBound}
    Suppose that $\calA_{d,\bdalpha}=\KK[I_{d,\bdalpha}]$  is the Veronese type
    algebra of \Cref{set:ideal_veronese} whose presentation ideal $J$ is not
    zero. Let $r\coloneqq \reg( \calA_{d,\bdalpha})$, $t\coloneqq
    \dim(\KK[\bdT])$, $G$ and $H$ be the numbers that we computed in
    \Cref{regSUM} and \Cref{Number_of_maximal_cliques}. Furthermore, we write
    $d_1\coloneqq d$ and $d_2\coloneqq \sum_{i=1}^n\alpha_i-d$.
    Then, we have
    \begin{align*}
        r+t-n \le \mathtt{e}(\calA_{d,\bdalpha}) &\le \min\Bigg\{ H\cdot(n-1)!+\frac{(G-H)\cdot(n-1)!}{2},\; d_1^{n-1},\; d_2^{n-1},
            \; \\
            & \qquad \qquad \qquad  \binom{r+t-n}{t-n}-\binom{r-2+t-n}{t-n}
        \Bigg\}.
    \end{align*}
\end{Theorem}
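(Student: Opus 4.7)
The plan is to assemble the six claimed inequalities by invoking, one at a time, the results that have already been set up in the paper. The four upper bounds come from essentially independent sources, and the lower bound follows from a single application of Eisenbud--Goto.

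First, the combinatorial upper bound $H\cdot(n-1)! + (G-H)(n-1)!/2$ is immediate from Corollary \ref{cor:mult_is_number_of_max_clique}, which identifies $\mathtt{e}(\calA_{d,\bdalpha})$ with $\#\MC(\calG(d,\bdalpha))$, together with the ``in particular'' sentence of Lemma \ref{Number_of_maximal_cliques}. For the bound $d_1^{n-1} = d^{n-1}$, I would view $\calA_{d,\bdalpha}$ as a graded $\KK$-subalgebra of the $d$-th Veronese subring $R_d$ of $S$, where each monomial of degree $d$ in $S$ is assigned grading weight one. Both $\calA_{d,\bdalpha}$ and $R_d$ have Krull dimension $n$ by Proposition \ref{prop:analytic_spread}, so the pointwise Hilbert function inequality $H_{\calA_{d,\bdalpha}}(k) \le H_{R_d}(k) = \binom{kd+n-1}{n-1}$ forces the leading-coefficient inequality $\mathtt{e}(\calA_{d,\bdalpha}) \le \mathtt{e}(R_d) = d^{n-1}$. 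The parallel bound $d_2^{n-1}$ then follows by applying the same argument to $\calA_{|\bdalpha|-d,\bdalpha}$ and invoking the isomorphism $\calA_{d,\bdalpha} \cong \calA_{|\bdalpha|-d,\bdalpha}$ of Equation \eqref{eqn:reduction_lambda}.

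For the Terai-type upper bound, I would apply Lemma \ref{TeraiBound} to $\calA_{d,\bdalpha} = \KK[\bdT]/J$: the codimension is $g = t - n$, the Castelnuovo--Mumford regularity is $r$ by Theorem \ref{regSUM}, and $\indeg(J) = 2$ because $\ini(J)$ is quadratic by Remark \ref{thm:definingIdeals} and $J \ne 0$ by hypothesis. For the lower bound I would apply Lemma \ref{EG} to $\KK[\bdT]/J$: after extending scalars to the algebraic closure (which preserves $\mathtt{e}$, $\reg$, and $\codim$, and also preserves primality of $J$ since $\calA_{d,\bdalpha}$ remains a subring of the corresponding polynomial ring), the ideal $J$ is a nondegenerate homogeneous prime (nondegeneracy because the minimal generators of $J$ all have degree two), and $\calA_{d,\bdalpha}$ is Cohen--Macaulay by Remark \ref{thm:definingIdeals}. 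Eisenbud--Goto then gives $r \le \mathtt{e}(\calA_{d,\bdalpha}) - (t-n)$, which rearranges to the desired lower bound.

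Of these steps, the only one with genuine mathematical content beyond citation is the Veronese comparison used for $d_1^{n-1}$: it relies on the elementary fact that if two polynomials of equal degree $n-1$ satisfy $p(k) \le q(k)$ for all $k \gg 0$, then their leading coefficients obey the same inequality, so that the Hilbert function bound transfers to a multiplicity bound. Everything else is a direct substitution into a previously established lemma, so the main obstacle is simply keeping the graded structures and the hypotheses (nondegeneracy, Cohen--Macaulayness, $\indeg = 2$, $g \ge 2$) compatible with the ambient Veronese-type setup.
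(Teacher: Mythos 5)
Your proposal is essentially sound and independently recovers most of the paper's argument, but it has one concrete gap: you list $g = t-n \ge 2$ as a hypothesis you need to ``keep compatible'' for Terai's bound (\Cref{TeraiBound}), yet you never verify it nor handle the case where it fails. This is not automatic from $J\ne 0$: for example with $n=3$, $\bdalpha=(1,1,2)$, $d=2$ one has $V_{n,d}^{\bdalpha}=\{(1,1,0),(1,0,1),(0,1,1),(0,0,2)\}$, so $t=4$, $g=1$, while $J$ contains $T_{(1,0,1)}T_{(0,1,1)}-T_{(1,1,0)}T_{(0,0,2)}\ne 0$. The paper addresses this explicitly: when $\codim(J)=1$, since $\ini(J)$ is height-unmixed and squarefree of height $1$, it is principal generated by a squarefree quadric, whence $\mathtt{e}(\calA_{d,\bdalpha})=2$, which equals $\binom{r+1}{1}-\binom{r-1}{1}$. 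You need to add this case analysis, or else prove (which you cannot, as the example shows) that $g\ge 2$ always holds under \Cref{set:ideal_veronese}.

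On a positive note, your argument for the $d_1^{n-1}$ bound is a genuinely different and arguably cleaner route than the paper's. You compare Hilbert functions directly through the graded inclusion $\calA_{d,\bdalpha}\hookrightarrow R_d$ and use that both have Krull dimension $n$ (\Cref{prop:analytic_spread}) to transfer the pointwise inequality to a leading-coefficient inequality. The paper instead observes $\MC(\calG(d,\bdalpha))\subseteq\MC(\calG(d,(d,\dots,d)))$ and invokes \Cref{cor:mult_is_number_of_max_clique} on both sides. Your version avoids the clique-counting identification and would apply to any equigenerated monomial subalgebra of $R_d$ of maximal dimension, whereas the paper's version stays entirely inside the combinatorial framework already built. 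Both are correct; neither saves work for the other bounds. Your remaining steps (combinatorial bound via \Cref{Number_of_maximal_cliques} and \Cref{cor:mult_is_number_of_max_clique}, $d_2^{n-1}$ via the generalized Newton dual isomorphism \eqref{eqn:reduction_lambda}, lower bound via Eisenbud--Goto \Cref{EG} after extension to the algebraic closure, with nondegeneracy coming from $J$ being generated in degree two) coincide with the paper's, and you in fact spell out the nondegeneracy check more explicitly than the paper does.
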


\begin{proof}
    The codimension of $J$ is $t-n$ by \Cref{prop:analytic_spread}. Since
    we can replace $\KK$ by its algebraic closure, the first inequality
    follows from \Cref{EG}.  
    As for the second inequality, we apply first Lemmas
    \ref{Number_of_maximal_cliques} and \ref{cor:mult_is_number_of_max_clique}.
    In addition, notice that the multiplicity in the Veronese case is
    well-known: $\mathtt{e}(\calA_{d_1,(d_1,\ldots,d_1)})=d_1^n$. Since
    obviously $\MC(\calG(d,\bdalpha))\subseteq
    \MC(\calG(d_1,(d_1,\dots,d_1)))$, we have
    $\mathtt{e}(\calA_{d,\bdalpha})\le d_1^{n-1}$ by
    \Cref{cor:mult_is_number_of_max_clique}.  Similarly, since
    $\calA_{d,\bdalpha}\cong\calA_{d_2,\bdalpha}$, we also have
    $\mathtt{e}(\calA_{d,\bdalpha})\le d_2^{n-1}$. 
    As for the remaining piece, we observe that the presentation ideal $J$
    and its initial ideal are quadratic by \Cref{thm:definingIdeals}. Thus,
    when $\codim(J)\ge 2$, we can apply \Cref{TeraiBound}. If instead
    $\codim(J)=1$, since $\ini(J)$ is height-unmixed, squarefree, and has
    the same height, this initial ideal is the intersection of principal
    monomial prime ideals. Consequently, the quadratic ideal $\ini(J)$ is
    generated by a squarefree monomial of degree $2$.
    In particular, $\mathtt{e}(\calA_{d,\bdalpha})
    =\mathtt{e}(\KK[\bdT]/\ini(J))=2$. On the other hand, 
    \[
        \binom{r+t-n}{t-n}-\binom{r-2+t-n}{t-n}=
        \binom{r+1}{1}-\binom{r-2+1}{1}=2,
    \]
    which means that we have equality in this case.
\end{proof}

\begin{Example}
    Let $\bdalpha=(1,4,4,5,7)$ and $d=7$. Using the notation in
    \Cref{MultiBound}, we have that $d_2^4=(21-7)^4=14^4=38416>d_1^4=7^4=2401$.
    By \Cref{Number_of_maximal_cliques}, we have $t=171$, $G=75$, and $H=18$.
    Therefore $H\cdot (5-1)!+(G-H)\cdot (5-1)!/2=1116$.  By \Cref{regSUM}, we
    have $r=\reg(\calF(\Sss(\bdeta)))=5-1=4$. Thus, $r+t-n=4+171-5=170$ and
    \begin{align*}
        \binom{r+t-n}{t-n}-\binom{r-2+t-n}{t-n}
        & = \binom{4+171-5}{171-5}-\binom{4-2+171-5}{171-5}\\
        &=33571342.
    \end{align*}
    It follows from \Cref{MultiBound} that we can obtain
    \[
        170\le \mathtt{e}(\calA_{d,\bdalpha})\le 1116.
    \]
    By directly enumerating the maximal cliques, we can check that
    $\mathtt{e}(\calA_{d,\bdalpha})=960$ by \Cref{lem:Terai_lem}.
\end{Example}

\begin{acknowledgment*}
    It is our pleasure to thank Rafael Villarreal for helpful suggestions. The authors sincerely thank the patient reviewer for the very helpful
and constructive suggestions that greatly improve the presentation of this manuscript.
    We are also grateful to the software system \texttt{Macaulay2} \cite{M2}, for serving as an excellent source of inspiration. The second author is partially supported by the ``Anhui Initiative in Quantum Information Technologies'' (No.~AHY150200) and the ``Innovation Program for Quantum Science and Technology'' (2021ZD0302902).
\end{acknowledgment*}

\bibliography{SymmetricBib}
\end{document}